\documentclass[11pt]{amsart}

\usepackage[thinlines]{easytable}
\usepackage{amsmath,amssymb,amsfonts} \usepackage{float}
\usepackage{graphicx}
\usepackage{booktabs}

 \usepackage{tikz}
 \usetikzlibrary{arrows.meta}

\usepackage{amssymb}
\usepackage{amsthm}
\usepackage{mathrsfs}
\usepackage{comment}
\usepackage{hyperref}

\usepackage[all,cmtip]{xy}\usepackage{xcolor}
\usepackage{enumerate}
\usepackage{bm}
\usepackage{dsfont}
\usepackage{mathtools}
\usepackage[cal=euler]{mathalfa}
\usepackage[top=1in, bottom=1.25in, left=1.25in, right=1.25in]{geometry}
\usepackage{parskip}

\hypersetup{colorlinks=true,linkcolor=magenta,citecolor=blue}

\DeclareMathAlphabet{\mathpzc}{OT1}{pzc}{m}{it}

\theoremstyle{plain}
\newtheorem{theorem}{Theorem}[section]
\newtheorem*{theorem*}{Theorem}

\newtheorem{lemma}[theorem]{Lemma}
\newtheorem*{claim*}{Claim}
\newtheorem{proposition}[theorem]{Proposition}

\newtheorem{corollary}[theorem]{Corollary}

\theoremstyle{definition}

\newtheorem{example}[theorem]{Example}

\newtheorem{remark}[theorem]{Remark}

\numberwithin{equation}{section}
\numberwithin{figure}{section}

\DeclarePairedDelimiter{\norm}{\lVert}{\rVert}
\newcommand{\Pent}{\mathbb{P}}

\newcommand{\Sc}{\operatorname{Sc}}
\newcommand{\Imag}{\operatorname{Im}}
\newcommand{\Span}{\operatorname{span}}
\newcommand{\assoc}[3]{\left[#1,#2,#3\right]}

\newcommand{\R}{\mathbb{R}}
\newcommand{\Q}{\mathbb{Q}}

\newcommand{\C}{\mathbb{C}}

\newcommand{\Hom}{\mathrm{Hom}}

\newcommand{\Tr}{\mathrm{Tr}}

\newcommand{\Aut}{\mathrm{Aut}}

\newcommand{\SO}{\mathrm{SO}}

\begin{document}

\title[Elliptic Integrability and Bracketing-Dependent Dynamics]{A five-dimensional nonassociative algebra: Elliptic Integrability and Bracketing-Dependent Dynamics}

\author{Alireza Poshtkohi}
\address{School of Physics, Engineering and Computer Science, University of Hertfordshire, Hatfield, Hertfordshire, United Kingdom}
\email{a.poshtkohi@herts.ac.uk}

\author{Susanne Pumpl\"un}
\address{School of Mathematical Sciences, University of Nottingham,
Nottingham NG7 2RD, United Kingdom}
\email{Susanne.Pumpluen@nottingham.ac.uk}

\keywords{Nonassociative algebra, quadratic algebras, elliptic integrability, Riccati equations, associators, periodic and chaotic dynamics.}

\subjclass[2020]{Primary: 37D45, 34A34, 17A45; Secondary: 17A35, 46H30, 34M15}

\date{\today}

\begin{abstract}
We introduce a five-dimensional nonassociative real algebra \(\Pent\) whose one-generated subalgebras are complex while its left, middle, and right nuclei are all \(\R\). This contrast permits an unambiguous one-element functional calculus but makes polynomial ordinary differential equations sensitive to bracketing. We classify the homogeneous quadratic words containing one fixed coefficient \(B\) and two copies of \(p\) that preserve \(\operatorname{span}_{\R}\{1,B,p\}\). For a fixed-coefficient family of Pentarnion ODEs interpolating between \(B(pp)\) and \((Bp)p\), changing only the bracketing preserves the linearisation but changes the observed global dynamics. Exact balance identities isolate the associator contribution, while numerical diagnostics support bounded chaos at one endpoint and an attracting periodic orbit at the other. The flexibility defect \(p(Bp)-(pB)p=-[p,B,p]\) also produces a rank jump: after polarisation, one-generated subalgebras have dimension three on a nonempty Zariski-open set, rather than at most two. On a canonical invariant three-dimensional space the resulting flow has two first integrals and an explicit periodic orbit on an elliptic curve. We also construct a dense five-dimensional quadratic flow with a full-affine-span Jacobi-elliptic solution. Its projective closure is an elliptic normal sextic, and representation by two ordered products selects the modulus \(m=1/2\).
\end{abstract}

\maketitle

\section{Introduction}

Our main goal is to initiate the investigation of polynomial differential equations over nonassociative locally complex real algebras and to determine how the failure of associativity affects invariant sets, integrability, and global dynamics.

As a test case, we introduce the unital five-dimensional real algebra \(\Pent\). Every subalgebra generated by the unit and one nonreal element is a complex slice, whereas the left, middle, and right nuclei of \(\Pent\) are all \(\R\). The first property makes powers and real-coefficient analytic functions of one element unambiguous. The second means that, once several independent elements interact, the placement of parentheses becomes genuine dynamical data.

Ordinary differential equations over \(\R\) and \(\C\) have been extensively investigated. Over recent decades, special classes of equations have also been studied over quaternion algebras, motivated in part by connections with quantum and fluid mechanics; see, for example, \cite{Adler1986,Adler1995,Finkelsteinetal1962}. Periodic quaternion-valued equations and quaternionic Riccati equations were studied in \cite{CamposMawhin2006,Wilczynski2009}, homogeneous quaternionic polynomial equations in \cite{Gasulletal2009}, and quaternion Bernoulli equations in \cite{Zhang2011}; a recent overview is provided in \cite{WalcherZhang2021}.

Polynomial differential equations over the quaternions are already noncommutative, but the quaternion algebra \(\mathbb H\) remains associative and possesses a nondegenerate multiplicative norm. A relation between homogeneous quadratic differential equations and commutative nonassociative algebras was first established by Markus \cite{Markus1960}. Related approaches appear in \cite{Andreoli1960,Koecher1977,Roehrl1977} and \cite{KaplanYorke1977,Walcher1986}. The monograph \cite{Walcher1991} develops the subject systematically.

Cayley's octonion algebra \(\mathbb O\) is a natural nonassociative choice  and leads to eight-dimensional real systems. It still possesses a nondegenerate multiplicative norm, contains quaternion subalgebras, and while nonassociative, it  is still alternative and flexible. Differential equations over octonions have been considered, for example, in \cite{FrenodLudkovsky2017,Ludkovsky2016,Ludkovsky2010}.

We take a different approach and use a five-dimensional quadratic real algebra that is strongly nonassociative while remaining sufficiently well-behaved for a functional calculus and for algebra-valued ordinary differential equations (ODEs). The algebra \(\Pent\), which we call the \emph{Pentarnion algebra}, is quadratic and has a nondegenerate  positive-definite norm which means it is locally complex. We use \(\Pent\) as a test case for developing polynomial differential equations over quadratic locally complex algebras. Its nuclei are minimal, so interactions involving several independent elements means these will not always associate. 

The classification results in \cite{BresarSemrlSpenko2011} cover locally complex real algebras only through dimension four. To our knowledge, the algebra \(\Pent\) has not previously appeared in the literature.

Quadratic locally complex algebras over \(\R\) form a large family and include quaternion and octonion algebras as special cases. They have a canonical trace, norm, and involution, and every element satisfies a quadratic equation. They are power-associative, which makes powers and one-element polynomial expressions well defined, but they need not be associative and their norm need not be multiplicative. Every real quadratic algebra with a positive-definite norm is locally complex: the subalgebra generated by its unit and any nonreal element is isomorphic to \(\C\). Unlike composition algebras, quadratic locally complex real algebras are not restricted to dimensions \(1,2,4,8\); examples exist in every finite dimension. This dimensional freedom is useful for polynomial differential equations. After fixing the canonical basis of \(\Pent\), each polynomial Pentarnion equation expands uniquely into a structured system of five coupled real polynomial ODEs.

Reference~\cite{RamirezSadovskaia2009} studies invariant elliptic curves in polynomial vector fields. The works \cite{KanekoKuwata2024} and \cite{Shatsila2022} treat projective embeddings by elliptic normal curves. Reference~\cite{SokolovWolf2020} develops matrix formulations of integrable quadratic systems with noncommuting associative variables, while \cite{LevinOlshanetskyZotov2016} develops a Lax formulation. The full-dimensional construction in Equation~\eqref{eq:compact-genus-one-pentarnion}, analysed in Theorem~\ref{thm:invariant-genus-one-ideal}, Corollary~\ref{cor:jacobi-elliptic-orbit}, Proposition~\ref{prop:elliptic-normal-sextic}, and Theorems~\ref{thm:linear-irreducibility} and~\ref{thm:modulus-selection}, combines these themes in a fixed five-dimensional nonalternative algebra. Its invariant oval has full affine span, its projective closure is an elliptic normal sextic, the ambient Jacobian algebra is all of \(M_5(\Q)\), and representability by two ordered Pentarnion products selects the elliptic modulus.

The contents and principal results of the paper are as follows. After briefly recalling basic terminology on nonassociative and locally complex quadratic algebras in Section~\ref{sec:prelim}, we introduce \(\Pent\) in Section~\ref{sec:Pentarnion}, determine its norm and nuclei, prove that its automorphism group is \(SO(2)\) in Theorem~\ref{thm:aut}, and identify canonical complex subalgebras fixed by this action in Corollary~\ref{cor:fixed-subalgebra}. In Section~\ref{sec:calculus} we develop a one-element functional calculus for locally complex quadratic real algebras of arbitrary finite dimension and show that associators of analytic functions of independent arguments reduce to scalar factors multiplying the underlying algebraic associator.

We then turn from associative controls on complex slices to genuinely nonassociative dynamics. In Section~\ref{sec:complex-slice}, Proposition~\ref{prop:complex-slice} proves that a fully bracketed polynomial map whose coefficients belong to \(\Pent^{\Aut(\Pent)}\) is \(SO(2)\)-equivariant and that a canonical complex slice is invariant when it contains the initial value. Proposition~\ref{prop:exact-circle} gives an exact circular family for one bracketing. Theorem~\ref{thm:circle-drift} then computes an exact associator-induced drift and proves that every nonzero deformation parameter excludes periodic orbits in the region considered. Thus changing only the associator weight destroys the displayed recurrent family.

Section~\ref{sec:rank-four-riccati} classifies all orderings and bracketings of the homogeneous word \(Bpp\) that preserve \(\operatorname{span}_{\R}\{1,B,p\}\); see Theorem~\ref{thm:universal-three-plane-classification}. The term
\[
 \mathcal H_B(p)=p(Bp)-(pB)p=-[p,B,p]
\]
measures the failure of flexibility and will be called the \emph{flexibility defect}. In the polarised commutative ODE algebra, \emph{rank three} means that every one-generated subalgebra has dimension at most two. Theorem~\ref{thm:associator-induced-rank-four} shows that adding the flexibility defect creates a third generated direction and hence produces generic rank four: on a nonempty Zariski-open set of generators, the corresponding one-generated subalgebra has dimension three. On the canonical invariant leaf, Theorem~\ref{thm:generic-rank-four-integrability} gives two independent first integrals and a reduction to quadrature, while Theorem~\ref{thm:rank-four-example-periodic-orbit} identifies an explicit bounded periodic orbit on an elliptic curve.

In Section~\ref{sec:pentarnion-odes} we analyse two mechanisms using all five coordinates. First, a dense quadratic field encoded by two ordered products has an exact Jacobi-elliptic solution of full affine span; its projective closure is an elliptic normal sextic, the ambient Jacobian family has no fixed linear block decomposition, and the permitted Pentarnion representation selects the modulus \(m=1/2\). Secondly, for the fixed-coefficient family
\[
 \dot p=Ap+(1-\eta)B(pp)+\eta(Bp)p
       =Ap+B(pp)+\eta[B,p,p],
\]
only the nonlinear associator contribution varies with \(\eta\). The linearisation at the origin is independent of \(\eta\), whereas exact balance identities and numerical Lyapunov, Poincar\'e, Floquet, and continuation calculations distinguish the global dynamics of the two endpoint bracketings. The evidence for chaos is numerical rather than a topological proof. Appendix~\ref{app:associator-235} gives a complementary solvable \(2{:}3{:}5\) example in which associator compensation creates an exact full-span periodic orbit and every uncompensated member changes its initial tangent. Appendix~\ref{app:code-hierarchy} provides the code and data availability information.

\section{Basic terminology and results for nonassociative algebras} \label{sec:prelim}

\subsection{Nonassociative algebras}
Let $F$ be a field and let $A$ be an $F$-vector space. $A$ is called an \emph{algebra} over $F$, if there exists an $F$-bilinear
map $A\times A\mapsto A$, $(x,y) \mapsto x \cdot y$, usually denoted
by juxtaposition, the  \emph{multiplication} of $A$. An algebra
$A$ is  \emph{unital} if there is an element in $A$, denoted by $1$ or $1_A$, such that $1\cdot x=x\cdot 1=x$ for all $x\in A$.

 The algebra $A$ is called \emph{commutative} if $xy=yx$ for all $x,y\in A$, and  \emph{anti-commutative} if $xy=-yx$ for all $x,y\in A$; $A$ is called \emph{flexible} if $x(yx)=(xy)x$ for all  $x,y\in A$, and \emph{alternative} if it is both \emph{left alternative}, i.e. we have  $(xx)y=x(xy)$  and \emph{right alternative}, i.e. we have $y(xx)=(yx)x$, for all  $x,y\in A$. An algebra $A$ is called \emph{power-associative} if $x^k$ is well defined for all integers $k>1$, no matter the placement of the parentheses; equivalently, every subalgebra generated by one element and 1 is associative.

In an $F$-algebra $A$, commutativity is measured by the \emph{commutator} $[x, y] = xy - yx$ and
associativity is measured by the {\it associator} $[x, y, z] = (xy) z - x (yz)$;
 $A$ is called {\it alternative} if its associator $[x, y, z]$ is
alternating. Here, $x,y,z\in A$.

 The \emph{left, middle} and \emph{right nucleus} of
$A$ are defined as ${\rm Nuc}_l(A) = \{ x \in A \, \vert \, [x, A, A]
= 0 \}$, ${\rm Nuc}_m(A) = \{ x \in A \,
\vert \, [A, x, A]  = 0 \}$ and  ${\rm
Nuc}_r(A) = \{ x \in A \, \vert \, [A,A, x]  = 0 \}$. All three sets are associative
subalgebras of $A$. Their intersection
 ${\rm Nuc}(A) = \{ x \in A \, \vert \, [x, A, A] = [A, x, A] = [A,A, x] = 0 \}$ is called the \emph{nucleus} of $A$. Whenever one of the elements
 $x, y, z$ lies in
${\rm Nuc}(A)$, we know that $x(yz) = (xy) z$. 
The  \emph{commuter} of $A$ is defined as ${\rm Comm}(A)=\{x\in A\,|\,xy=yx \text{ for all }y\in A\}$ and the {\it
center} of $A$ is defined as ${\rm C}(A)=\text{Nuc}(A)\cap  {\rm Comm}(A)$.

A nonassociative algebra $A\not=0$ has no zero divisors if and only if
$L_a$ and $R_a$ (left and right multiplication with $a$ in $A$) are injective for all $0\not=a\in A$.   An algebra $A\not=0$  is called a 
\emph{division algebra} if for all $a\in A$, $a\not=0$, both
the left and right multiplication $L_a$ and $R_a$, are bijective for all $0\not=a\in A$. When $A$ has finite dimension (which means $A$ is a finite-dimensional $F$-vector space), $A$ is a division algebra if and only if $A$ has no zero divisors \cite{Schafer1995}.
It is well-known that every real division algebra must have dimension 1, 2, 4, or 8.

 Any (ring) anti-automorphism $\sigma:A\to A$ of period 2 is called an {\it involution} on $A$.
An involution on a unital algebra  is called {\it scalar} if all {\it norms} $\sigma(x)x$ are elements of $F1$.
For every scalar involution $\sigma$, $N:A\to F$ defined via $N_A(x)=\sigma(x)x$ is a quadratic form on $A$, respectively, the {\it trace} $\Tr:A\to A$, $\Tr(x)=\sigma(x)+x$ a linear form on $A$ \cite{McCrimmon1985}.

\subsection{Quadratic algebras}\label{subsec:quadr}
A unital algebra $A$ is called {\it quadratic}, if there exists a quadratic form $N \colon A \to F$ such that  
\begin{equation}\label{equ:1}
x^2 - N(1_A, x)x + N(x)1_A = 0 
\end{equation}
for all $x \in A$,
 where $N(x,y) = N (x+y) - N(x) -N(y)$ denotes the
 symmetric bilinear form induced by $N$. This automatically implies that
$N(1_A) = 1$. The form $N$ is uniquely determined
and called the {\it norm} $N$ of the quadratic algebra $A$.
 The existence of a scalar involution  on a unital algebra $A$ implies that $A$ is quadratic.
 Moreover, the unital algebras with scalar involution are precisely those quadratic algebras with normal trace \cite[Theorem 1.1, Proposition 2.2]{McCrimmon1985}.

 Two isomorphic quadratic algebras over $F$ have isometric norms.
The converse is not true.
 
 Every quadratic algebra is power-associative, so in particular, multiplying Equation \ref{equ:1} on the left or right by $x^n$ yields
\begin{equation}\label{equ:power-recurrence}
x^{n+2} - N(1_A, x)x^{n+1} + N(x)1_A x^n = 0
\end{equation}
for all $x \in A$.

An algebra $A$ together with a nondegenerate quadratic form $N:A\to F$ is called a \emph{mono-composition algebra} if $N(x^2)=N(x)^2$ for all $x\in A$. Every quadratic algebra is a mono-composition algebra \cite[p.~86]{McCrimmon1985}.

When $A$ is a quadratic algebra, then every $x\in A$ with $N(x)\not =0$, has an inverse $x^{-1}$ defined by
\[
x^{-1}=\frac{1}{N(x)} \overline{x},
\]
that means
\[
  xx^{-1}=x^{-1}x=1.
\]

Moreover, then every $x\in A$ generates an associative subalgebra of dimension $\leq 2$ in $A$ which is spanned by $1$ and $x$.

If $A$ is a quadratic algebra over $F$, then $A=F1\oplus V$ with $V=\{ u\in A\,|\, \Tr(u)=0\}$ the pure part of $A$. Define $\times: V\times V\rightarrow V$,
$u\times v=pr(uv)$ with $pr:A \rightarrow V$ the canonical projection map. Then $(V,\times)$ is an anti-commutative algebra over $F$ with $$uv=-\frac{1}{2}N(u,v)+u\times v.$$
The algebra $(V,\times)$ is called the \emph{vector algebra} of $A$ or the  \emph{pure part} of $A$.

Conversely, it is well-known that every quadratic algebra over $F$ 
can be obtained out of an anti-commutative algebra
$(V,\wedge)$ and an $F$-bilinear form $(\,,\,):V\times V\to F$ by defining a multiplication on the $F$-vector
space $A=F\oplus V$ via
$$(a,u)(b,v)=(ab+(u,v),av+bu+u\wedge v) $$
for all $a,b\in F$ and $u,v\in V$. The resulting algebra $A$ is often denoted by $(V,(\,,\,),\wedge)$.

For a quadratic algebra $A$, its symmetrization given by  the Jordan product
\[
x\circ y=\frac{xy+yx}{2}
\]
 makes $(A,\circ)$ into a 
Jordan algebra. 
We have
\[
(a,u)\circ(b,v)
=(ab-\langle u,v\rangle,a v+b u)
\]
for $x=(a,u)$, $y=(b,v)$, so $(A,\circ)$ is the Jordan algebra defined via the symmetric bilinear form $(\,,\,):V\times V\to F$.

\subsection{Composition algebras}  A quadratic form $N \colon A \to F$ is called {\it nondegenerate} if its
 induced symmetric bilinear form $N(u,v)  =
N(u+v)-N(u)-N(v)$ determines an $F$-vector space isomorphism
$C \overset{\sim}\to A^\vee=\Hom_F(A,F)$. A quadratic form $N \colon A \to F$ on a unital algebra $A$ is {\it multiplicative}  if  $N(uv)=N(u)N(v)$ for all $u,v\in A$.
A unital algebra $A$ is called a {\it composition algebra} over $F$
if it admits a nondegenerate multiplicative quadratic form $N \colon A \to F$. Composition algebras are quadratic alternative and $N(1_A)=1$.  A nondegenerate  multiplicative  quadratic form $N$ on  a unital composition algebra $A$ is its norm as a quadratic algebra, thus uniquely determined up to isometry \cite[p.~454 ff.]{KMRT}
 and called the {\it norm} of $A$. 

 A unital quadratic alternative algebra is a composition algebra if
and only if its norm is nondegenerate [M1, 4.6]. Composition algebras
exist only in dimensions 1, 2, 4 or 8. Those of 
dimension 4 are exactly the well-known
 quaternion algebras, those of dimension 8 are called {\it octonion algebras}. The conjugation
 $\overline{x} = \Tr_{C/F}(x)1_A - x$ of a composition algebra
 $A$ is a scalar involution, called the {\it canonical involution} of $A$, where $\Tr_A \colon A \to F$, $\Tr_{A/F}(x) = N_{A/F}
 (1_A, x)$, is the {\it trace} of $A$.

We will mostly work with $F=\mathbb{R}$, but most of our results carry over to any field $F$ of characteristic not 2 in an obvious fashion. When $F=\mathbb{R}$, we call a unital $F$-algebra $A$ \emph{locally complex} if every element $x\in A \setminus F$, together with 1, generates a subalgebra of $A$ (also called a \emph{one-generated} subalgebra) that is isomorphic to $\mathbb{C}$.

\section{The Pentarnion algebra}\label{sec:Pentarnion}

The \emph{Pentarnion algebra} $\Pent=\R\oplus \R i\oplus \R j \oplus \R k\oplus \R \ell$ is a unital 5-dimensional real algebra with basis 1, $i$, $j$, $k$, $\ell$ that is defined via  the following multiplication table.

\begin{table}[H]
\centering
\renewcommand{\arraystretch}{1.35}
$
\begin{array}{c|ccccc}
\cdot & 1 & i & j & k & \ell \\
\hline
1 & 1 & i & j & k & \ell \\
i & i & -1 & \tfrac{k+\ell}{\sqrt{2}} & -\tfrac{j+\ell}{\sqrt{2}} & -\tfrac{j+k}{\sqrt{2}} \\
j & j & -\tfrac{k+\ell}{\sqrt{2}} & -1 & \tfrac{i+\ell}{\sqrt{2}} & -\tfrac{k+i}{\sqrt{2}} \\
k & k & \tfrac{j+\ell}{\sqrt{2}} & -\tfrac{i+\ell}{\sqrt{2}} & -1 & -\tfrac{i+j}{\sqrt{2}} \\
\ell & \ell & \tfrac{j+k}{\sqrt{2}} & \tfrac{k+i}{\sqrt{2}} & \tfrac{i+j}{\sqrt{2}} & -1
\end{array}
$
\caption{Canonical Pentarnion multiplication table.}
\label{tab:multiplication}
\end{table}

Note that as usual, rows multiply columns. Thus for instance the entry in row $i$ and column $j$ is $ij$. The basis 1, $i$, $j$, $k$, $\ell$ is called the \emph{canonical basis} of $\Pent$. Note also while the algebra's multiplication table looks rather balanced, the role of $\ell$ is different from the one of $i,j,k$. This forces the algebra to have a rather small automorphism group, see Theorem \ref{thm:aut}.

This algebra is clearly not commutative (e.g.,
$ ij=\frac{k+\ell}{\sqrt{2}},$ while
 $ji=-\frac{k+\ell}{\sqrt{2}}$), and we will see it is also highly nonassociative, hence we need to carefully manage any bracketing of factors in $\Pent$. 

 \begin{example} We have
$ (ij)k=\frac{1}{\sqrt{2}}(k+\ell)k  =\frac{1}{\sqrt{2}}(-1+\ell k),$
and hence $ (ij)k=-\frac{1}{\sqrt{2}}+\frac{1}{2}i+\frac{1}{2}j.$
On the other hand, $ i(jk)=\frac{1}{\sqrt{2}}i(i+\ell)  =\frac{1}{\sqrt{2}}(-1+i\ell),$
and hence $ i(jk)=-\frac{1}{\sqrt{2}}-\frac{1}{2}j-\frac{1}{2}k$, so that $ (ij)k\not=i(jk)$.
\end{example}

An element  $p\in \Pent$ is called a \emph{Pentarnion}.  Every $p\in \Pent$  can be written as
\[
  p=p_0+p_1i+p_2j+p_3k+p_4\ell,
  \qquad
  p_0,p_1,p_2,p_3,p_4\in\R.
\]
We call the  coefficient $p_0$  the \emph{scalar part}, and
\[
  \Imag(p)=p_1i+p_2j+p_3k+p_4\ell
\]
 the \emph{pure part} of $p$.

 \begin{lemma}
 (i) $\Pent$ has zero divisors.
 \\ (ii) $\Pent$ is not alternative and not flexible.
 \\ (iii) The map
\[
  p=p_0+p_1i+p_2j+p_3k+p_4\ell \mapsto  \overline{p}=p_0-p_1i-p_2j-p_3k-p_4\ell,
\]
is a scalar involution on $\Pent$ and
$$p\overline{p}=p_0^2+p_1^2+p_2^2+p_3^2+p_4^2.$$
\end{lemma}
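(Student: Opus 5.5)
For (i) I would argue abstractly rather than search for an explicit pair. $\Pent$ is a nonzero finite-dimensional real algebra of dimension $5\notin\{1,2,4,8\}$, so it is not a division algebra. As recalled in Section~\ref{sec:prelim}, a finite-dimensional algebra is a division algebra if and only if it has no zero divisors. Hence $\Pent$ has zero divisors. An explicit pair could be found afterwards by computing $\det L_a$ on a suitable line, but this is not needed.

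For (ii) I would exhibit explicit counterexamples on basis elements, using only Table~\ref{tab:multiplication}. To show $\Pent$ is not left alternative, take $x=i$, $y=j$. Then $(ii)j=-j$, while
\[
i(ij)=\tfrac{1}{\sqrt2}(ik+i\ell)=-\tfrac12(j+\ell)-\tfrac12(j+k)=-j-\tfrac12(k+\ell).
\]
These differ, so $\Pent$ is not alternative. For flexibility, the pair $x=i$, $y=j$ happens to satisfy $x(yx)=(xy)x$, so I would try $x=i$, $y=k$ next. Here
\[
i(ki)=\tfrac{1}{\sqrt2}(ij+i\ell)=\tfrac12(k+\ell)-\tfrac12(j+k)=\tfrac12(\ell-j),
\]
while
\[
(ik)i=-\tfrac{1}{\sqrt2}(ji+\ell i)=\tfrac12(k+\ell)-\tfrac12(j+k)=\tfrac12(\ell-j).
\]
These agree, so this pair fails too. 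In the worst case I would scan the remaining basis pairs $(x,y)$ with $x\neq y$. If every basis pair is flexible, I would use the linearised identity $[x,y,z]+[z,y,x]=0$ with three distinct basis elements, for instance $x=i$, $y=j$, $z=k$, using the Example, which already gives $(ij)k$ and $i(jk)$. This search is the only genuine obstacle in the lemma. It is a finite computation, and I expect a nonzero value among the cases involving $\ell$, because $\ell$ breaks the symmetry of the table.

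For (iii) I would first read off two structural facts from the table. For $u,v\in V=\Span\{i,j,k,\ell\}$, the product is $uv=-\langle u,v\rangle+u\wedge v$, where $\langle\,,\rangle$ is the Euclidean dot product in the basis $i,j,k,\ell$ and $\wedge\colon V\times V\to V$ is anticommutative. This holds because each square of a pure basis element is $-1$, each product of two distinct pure basis elements is pure, and each such product changes sign when the factors are swapped. Thus $\Pent=(V,-\langle\,,\rangle,\wedge)$ in the notation of Section~\ref{subsec:quadr}, and
\[
(a,u)(b,v)=(ab-\langle u,v\rangle,\;av+bu+u\wedge v).
\]
The map $\overline{(a,u)}=(a,-u)$ is linear and has period $2$. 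It is an anti-automorphism because
\[
\overline{y}\,\overline{x}=(b,-v)(a,-u)=(ab-\langle v,u\rangle,\,-bu-av+v\wedge u),
\]
which equals $\overline{xy}$ by the symmetry of $\langle\,,\rangle$ and the anticommutativity of $\wedge$. Finally,
\[
p\overline p=(a,u)(a,-u)=(a^2+\langle u,u\rangle,\,-au+au-u\wedge u)=a^2+\lvert u\rvert^2,
\]
which is a scalar. Hence the involution is scalar and the displayed norm formula holds.
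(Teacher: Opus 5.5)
Parts (i) and (iii) are correct and follow the paper's route. In (iii) your scalar--vector computation actually proves $\overline{xy}=\overline{y}\,\overline{x}$, which the paper only reads off the table. Your non-alternativity witness $[i,i,j]=\tfrac12(k+\ell)$ is the paper's own.

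The gap is non-flexibility. You never produce a witness, and both searches you propose are bound to fail.

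From $uv=-\langle u,v\rangle+u\wedge v$ one gets, for pure $x,y$,
\[
[x,y,x]=(xy)x-x(yx)=-2\langle x\wedge y,\,x\rangle .
\]
Since $e_ae_b$ lies in the span of the pair complementary to $\{e_a,e_b\}$, we have $\langle e_a\wedge e_b,e_a\rangle=0$. So \emph{every} pair of basis elements is flexible, not just $(i,j)$ and $(i,k)$, and this is not an accident. Scanning basis pairs cannot succeed.

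Polarising the identity above gives
\[
[x,y,z]+[z,y,x]=-2\bigl(\langle x\wedge y,z\rangle+\langle z\wedge y,x\rangle\bigr).
\]
Because $i\to j\to k\to i$ is cyclically oriented, the trilinear form $\langle x\wedge y,z\rangle$ is alternating on $\Span\{i,j,k\}$. Concretely, $[k,j,i]=-\bigl(\tfrac12 i+j+\tfrac12 k\bigr)=-[i,j,k]$, so your suggested triple $(i,j,k)$ gives $0$.

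Your expectation that $\ell$ must be involved is right; indeed $\ell$ is a source in the graph, which breaks the cyclic orientation. But it has to be turned into an explicit nonzero evaluation. For example, $[i,j,\ell]+[\ell,j,i]=-2\sqrt2$, and since basis pairs are flexible, $[i+\ell,j,i+\ell]=-2\sqrt2\neq0$. Alternatively, use the paper's witness $[i+j,\ell,i+j]=2\sqrt2$. Without such a computation, (ii) is only half proved.
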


 \begin{proof}
  $(i)$ This is clear since $\Pent$ has dimension 5.
 \\ $(ii)$  This  follows directly from the general theory of composition and quadratic algebras. For example, $\assoc{i}{i}{j}=(ii)j-i(ij)=\frac{k+\ell}{2}\ne 0$ and $\assoc{i+j}{\ell}{i+j}=2\sqrt{2}$.
\\
 $(iii)$ It is clear that $\overline{\phantom{p}}$ is additive and bijective, and has order 2; i.e. $\overline{\overline{p}}=p$. Moreover, the multiplication table shows that
\[
\overline{pr}=\overline{r}\,\overline{p}
\]
for all $p,r\in \Pent$, so that this map is an anti-automorphism.

For $p=p_0+p_1i+p_2j+p_3k+p_4\ell$, we compute
$$p\overline{p}=p_0^2+p_1^2+p_2^2+p_3^2+p_4^2\in \R,$$
thus $\overline{\phantom{p}}$ is scalar: Write
\[
  p=p_0+u,
  \qquad
  u=p_1i+p_2j+p_3k+p_4\ell\in V.
\]
Then $ \overline{p}=p_0-u$
and, because real scalars commute and associate with every element,
\[
  p\overline{p}=(p_0+u)(p_0-u)=p_0^2-u^2.
\]
Expanding $u^2$ gives  four diagonal terms and six mixed pairs. Put
\[
  D=p_1^2i^2+p_2^2j^2+p_3^2k^2+p_4^2\ell^2
\]
and
\begin{align*}
  M={}&p_1p_2(ij+ji)+p_1p_3(ik+ki)+p_1p_4(i\ell+\ell i)\\
     &+p_2p_3(jk+kj)+p_2p_4(j\ell+\ell j)+p_3p_4(k\ell+\ell k).
\end{align*}
Hence we have
$ u^2=D+M.$
Every mixed pair vanishes because distinct pure basis units anti-commute:
\[
  ij+ji=ik+ki=i\ell+\ell i=jk+kj=j\ell+\ell j=k\ell+\ell k=0.
\]
Since  $i^2=j^2=k^2=\ell^2=-1,$ we get
$u^2=-\left(p_1^2+p_2^2+p_3^2+p_4^2\right)$
  and therefore
\[
N(p)=p\overline{p}=p_0^2+p_1^2+p_2^2+p_3^2+p_4^2.
\]
 \end{proof}
 Since $\Pent$ has a scalar involution, there exists a quadratic form  $N:\Pent\to \mathbb{R}$,
\[
  N(p)=p\overline{p}=p_0^2+p_1^2+p_2^2+p_3^2+p_4^2,
\]
 the  \emph{norm} of $\Pent$, and a linear form  $\Tr:\Pent\to \mathbb{R}$,
\[
  \Tr(p)=p+\overline{p}=2p_0.
\]
 the  \emph{trace}  of $\Pent$ \cite[p.~86]{McCrimmon1985}. We note that our $N$ is nondegenerate, anisotropic, positive definite and that 
\[
  \norm{p}=\sqrt{N(p)},
\] 
where $\norm{\phantom{p}}$ is the Euclidean length of the coordinate vector of $p$. Also note that 
$$N(p)=p\overline{p}=\overline{p}p$$
 and that $N$ is not multiplicative (if it were we would have found a 5-dimensional algebra with a nondegenerate multiplicative norm, a contradiction) but satisfies 
 $$N(p^2)=N(p)^2.$$

\begin{example}
Since $\Pent$ has dimension 5, it has non-trivial zero divisors, e.g.
$(j-k-\ell)(-3\sqrt{2}+i-4k+4\ell)=0$. Note that $N(j-k-\ell)=3$, $N(-3\sqrt{2}+i-4k+4\ell)=51$, hence both $j-k-\ell$ and $-3\sqrt{2}+i-4k+4\ell$ are invertible. However, $p_0^{-1}(p_0r_0)=0$ and $(p_0^{-1}p_0)r_0=r_0$, so that their difference is exactly the associator
\[
  \assoc{p_0^{-1}}{p_0}{r_0}=r_0.
\]
\end{example}

 \begin{remark} $(i)$  The scalar part $\Sc(p)$ and pure part $\Imag(p)$ of $p$ can  be recovered as
\[
  \Sc(p)=\frac{p+\overline{p}}{2}=p_0,
\]
\[
  \Imag(p)=\frac{p-\overline{p}}{2}=p_1i+p_2j+p_3k+p_4\ell.
\]
$(ii)$ The
product of two distinct pure
basis elements has Euclidean length one; for example,
$ N(ij)=N\!\left(\frac{k+\ell}{\sqrt{2}}\right)=1.$
 \\ $(iii)$
  The sign convention of the products of the basis vectors can be encoded by the following directed graph on the four pure canonical basis elements.

\begin{figure}[H]
\centering
\begin{tikzpicture}

  \node[circle, draw, ultra thick, minimum size=14mm] (k) at (0,3.3) {\Large $k$};
  \node[circle, draw, ultra thick, minimum size=14mm] (i) at (-2.7,0) {\Large $i$};
  \node[circle, draw, ultra thick, minimum size=14mm] (j) at (2.7,0) {\Large $j$};
  \node[circle, draw, ultra thick, minimum size=14mm] (ell) at (0,1.25) {\Large $\ell$};

  \path[->, >=stealth, ultra thick] (i) edge[bend left=5] (j);
  \path[->, >=stealth, ultra thick] (j) edge[bend left=5] (k);
  \path[->, >=stealth, ultra thick] (k) edge[bend left=5] (i);

  \path[->, >=stealth, thick] (ell) edge (i);
  \path[->, >=stealth, thick] (ell) edge (j);
  \path[->, >=stealth, thick] (ell) edge (k);
\end{tikzpicture}
\caption{Directed-graph encoding of the edges $i\to j\to k\to i$ and $\ell\to i$, $\ell\to j$, $\ell\to k$.}
\label{fig:multiplication-graph}
\end{figure}
For each directed edge $a\to b$, let $c$ and $d$ be the two remaining pure basis units and set
$ab=\frac{c+d}{\sqrt{2}}.$ Reversing the arrow changes the sign. Together with $i^2=j^2=k^2=\ell^2=-1$, the identity element, anti-commutativity, and bilinearity, this graph displays the full multiplication table of $i,j,k,l$.
\end{remark}

 \begin{corollary}\label{cor:quadratic}
(i) $\Pent$ is a unital quadratic algebra; every $p\in \Pent$ satisfies the quadratic equation
$$ p^2-\Tr(p)p+N(p)1=0.$$
 (ii) $\Pent$ is power associative.
\\ (iii) $p^{n+2}=\Tr(p)p^{n+1}-N(p)p^n=2p_0p^{n+1}-N(p)p^n$ for all $ n\geq 0.$
\\ (iv) $\Pent$ is a locally complex algebra.
\end{corollary}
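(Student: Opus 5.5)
The plan is to derive everything from the scalar involution established in the preceding lemma, part (iii), together with the general facts recalled in Subsection~\ref{subsec:quadr}.

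\textbf{Part (i).} Write $p=p_0+u$ with $u$ pure. The computation in the proof of the lemma gives
\[
u^2=-(p_1^2+p_2^2+p_3^2+p_4^2).
\]
Since real scalars commute and associate with everything, we get
\[
p^2=p_0^2+2p_0u+u^2 .
\]
Substituting $u=p-p_0$ yields
\[
p^2=2p_0p-p_0^2-(p_1^2+\dots+p_4^2)=\Tr(p)p-N(p)1 .
\]
It remains to match this with the definition in Equation~\eqref{equ:1}, i.e.\ to check that $N(1,p)=\Tr(p)$. Polarising $N(p)=p_0^2+\dots+p_4^2$ at $1$ gives $N(1,p)=2p_0=\Tr(p)$. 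Alternatively one can cite \cite[Theorem 1.1]{McCrimmon1985}: a unital algebra with a scalar involution is quadratic.

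\textbf{Part (ii).} Every quadratic algebra is power-associative, as recalled in Subsection~\ref{subsec:quadr}. Concretely, part (i) shows that $\Span_\R\{1,p\}$ is closed under multiplication, because $p\cdot p\in\Span_\R\{1,p\}$. This span is generated by the single element $p$ and $1$, and it is commutative. It is also associative: it is a quotient of $\R[x]$ by $x^2-\Tr(p)x+N(p)$, since the multiplication on the span is determined by bilinearity from $1$ and $p^2$. Hence the one-generated subalgebra is associative, which is exactly power-associativity.

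\textbf{Part (iii).} This is Equation~\eqref{equ:power-recurrence}, obtained by multiplying (i) by $p^n$. The multiplication is unambiguous because of (ii), and $\Tr(p)=2p_0$.

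\textbf{Part (iv).} Take $p\notin\R$, so $u\neq0$. The subalgebra generated by $1$ and $p$ is $\Span_\R\{1,p\}=\Span_\R\{1,u\}$, which is $2$-dimensional. Set
\[
J=\frac{u}{\norm{u}} .
\]
From $u^2=-\norm{u}^2$ we get $J^2=-1$. Then $a+bJ\mapsto a+bi$ is an algebra isomorphism onto $\C$.

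Equivalently, the minimal polynomial $x^2-2p_0x+N(p)$ has discriminant
\[
4p_0^2-4N(p)=-4\norm{u}^2<0,
\]
which is the positive-definiteness of $N$, so the slice is $\R[x]/(x^2-2p_0x+N(p))\cong\C$.

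The only step needing some care is the identification $N(1,p)=\Tr(p)$ in (i), together with the justification that the span $\Span_\R\{1,p\}$ is associative in (ii). Both are routine, so I expect no real obstacle.
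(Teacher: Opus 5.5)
Your proof is correct. It rests on the same two facts as the paper's proof: the quadratic relation coming from the scalar involution, and $u^2=-\norm{u}^2<0$ for nonzero pure $u$. The difference is that you verify directly what the paper cites.

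\textbf{Part (i).} The paper appeals to McCrimmon's theorem that a scalar involution makes the algebra quadratic. As written, its proof of (i) says (i) ``follows from (ii)'', while (ii) is in turn deduced from (i). Your computation $p^2=2p_0p-N(p)1$, together with the check $N(1,p)=2p_0=\Tr(p)$, removes that apparent circularity.

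\textbf{Part (ii).} You check by hand that the linear map $a+bx\mapsto a+bp$ from $\R[x]/(x^2-\Tr(p)x+N(p))$ onto $\Span_{\R}\{1,p\}$ is multiplicative. The paper instead invokes the general fact that quadratic algebras are power-associative. One small imprecision: when $p\in\R$ the span is only a proper homomorphic image of this quotient, not the quotient itself, but that case is trivial.

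\textbf{Part (iii).} This is the same in both.

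\textbf{Part (iv).} The paper uses the trichotomy $\C$, $\R[t]/(t^2)$, $\R\times\R$ for two-dimensional unital associative real algebras. It then excludes the last two by the anisotropy and positive-definiteness of $N$. You instead write down $J=u/\norm{u}$ with $J^2=-1$ and an explicit isomorphism onto $\C$, which needs no classification.

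The paper's version is shorter and places the result within the general theory of quadratic algebras. Yours is self-contained and can be checked line by line from the multiplication table.
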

 
 \begin{proof}
$(i)$  This follows from $(ii)$ by \cite{McCrimmon1985}, since $\Pent$ carries a scalar involution.
\\ $(ii)$
Since $\Pent $ is a unital quadratic algebra, $\Pent$ is power associative.
\\ $(iii)$ Since $ p^2=\Tr(p)p-N(p)1$, this is straightforward.
\\ $(iv)$ Since $\Pent$ is a quadratic algebra, this is well known, but we give a direct proof here as well: Every subalgebra generated by $1$ and some pure element will be a (unital) associative algebra of dimension $2$. Thus it is well known that every subalgebra will either be $\mathbb{C}$, $\R[t]/(t^2)$ or the split algebra $\R\times \R$. Let $\R(v)$ be such a subalgebra with $v\not=0$, and note that  $v^2=-N(v)$. Since $N(v)$ is an anisotropic form, we have $N(v)=0$ if and only if $v=0$, so that $N(v)\not=0$ for all $v\in V$, which excludes the case that $\R(v)$ are the dual numbers. Also $N$ is positive definite, so that the split algebra $\R\times \R$ will not appear either as always $N(v)>0$. This yields the assertion.
\end{proof}

Since $\Pent$ is locally complex, every pure Pentarnion $p\in V$ determines a complex-like slice. This means that real-coefficient analytic functions of one Pentarnion $p$ are unambiguous.

Since $\Pent$ is a quadratic algebra, by Section \ref{subsec:quadr} we have
 $\Pent=\R 1\oplus V$ with the pure part $V=\{ u\in \Pent\,|\, \Tr(u)=0\}=\Span_{\R}\{i,j,k,\ell\}$. Define $\times: V\times V\rightarrow V$,
$u\times v=pr(uv)$ with $pr:\Pent \rightarrow V$ the canonical projection map. Then the pure part $(V,\times)$ of $\Pent$ is an anti-commutative algebra over $F$ of dimension 4 and
$$uv=-\frac{1}{2}N(u,v)+u\times v$$
for all $u,v\in V$.

More concretely,  $ V=\Span_{\R}\{i,j,k,\ell\}$ is equipped
with the Euclidean inner product $\langle\,,\,\rangle$ for which $i,j,k,\ell$ are orthonormal. For pure elements $u,v\in V$, define the anti-symmetric map
\[
  \Pi:\Lambda^2V\longrightarrow V
\]
by its values on the six basis bi-vectors
\[
  \Pi(i\wedge j)=\frac{k+\ell}{\sqrt{2}},
  \qquad
  \Pi(j\wedge k)=\frac{i+\ell}{\sqrt{2}},
  \qquad
  \Pi(k\wedge i)=\frac{j+\ell}{\sqrt{2}},
\]
\[
  \Pi(\ell\wedge i)=\frac{j+k}{\sqrt{2}},
  \qquad
  \Pi(\ell\wedge j)=\frac{k+i}{\sqrt{2}},
  \qquad
  \Pi(\ell\wedge k)=\frac{i+j}{\sqrt{2}}.
\]
Then we can also write
$$u\times v=\Pi(u\wedge v)$$
and the  multiplication of $\Pent$ restricted to  the pure part $V$ can then be written as
\[
uv=-\langle u,v\rangle+\Pi(u\wedge v)=-\frac{1}{2}N(u,v)+\Pi(u\wedge v).
\]
The scalar part is the negative of the symmetric bilinear form  $\frac{1}{2}N(u,v)$ associated to $N$, the pure part is anti-symmetric. Note that
\[
  uv+vu=-2\langle u,v\rangle,
\]
and
\[
  uv-vu=2\Pi(u\wedge v).
\]
Setting $v=u$ recovers $u^2=-\norm{u}^2$, because $u\wedge u=0$.

 By Section \ref{subsec:quadr}, the multiplication of $\Pent=\R\oplus V$ can subsequently be written as
 $$(a,u)(b,v)=(ab-\langle u,v\rangle,av+bu+\Pi(u\wedge v)) $$
 for all $a,b\in \R$ and $u,v\in V$ with the $\R$-bilinear form $\langle,\rangle:V\times V\to \R$, $\langle u, v\rangle=\frac{1}{2}N(u,v)$ which is associated to $N$, that is the Euclidean product.
This formula  is  the starting point for converting Pentarnion equations into equations with real coordinates.

\begin{remark} (Geometric meaning of complementary-plane multiplication)
Since $ \dim V=4,$ the space of bi-vectors has dimension
$ \dim\Lambda^2V=\binom{4}{2}=6$ and the map
\[
  \Pi:\Lambda^2V\longrightarrow V
\]
therefore sends six oriented two-dimensional plane directions into four pure directions.
For example,
\[
  i\wedge j\longmapsto
  \Pi(i\wedge j)=\frac{k+\ell}{\sqrt{2}}
  \in\Span\{k,\ell\}.
\]
This means 
\[
  e_a e_b\in
  \Span\{\text{the complementary pair to }e_a,e_b\}
\]
if $e_a$ and $e_b$ denote two canonical basis vectors in $V$.
\end{remark}

Define the symmetrised product
\[
p\circ r=\frac{pr+rp}{2}
\]
on $\Pent$.
The alternating term cancels, giving
\[
(a,u)\circ(b,v)
=(ab-\langle u,v\rangle,a v+b u).
\]
This is the five-dimensional locally complex spin-factor-type Jordan product, and $(\Pent, \circ)$ is often denoted $J_{5}$ in the locally complex algebra literature. Thus $\Pent$ is \emph{Jordan-admissible}: its symmetrisation is a Jordan algebra. Over fields of characteristic different from $2$, noncommutative Jordan algebras are flexible Jordan-admissible algebras. The Pentarnion algebra $\Pent$ is not flexible.
Therefore $\Pent$ is not a (noncommutative) Jordan algebra. We summarize these observations.

\begin{lemma}\label{le:Jordanadm}
$\Pent$ is Jordan-admissible  and $(\Pent,\circ)=J_5$.
\end{lemma}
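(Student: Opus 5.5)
The plan has two parts: compute the symmetrised product in the splitting \(\Pent=\R1\oplus V\), and then verify the Jordan identity for the resulting spin-factor product directly.

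\textbf{Step 1: the symmetrised product.} From the multiplication formula \((a,u)(b,v)=(ab-\langle u,v\rangle,\,av+bu+\Pi(u\wedge v))\) and the antisymmetry \(\Pi(v\wedge u)=-\Pi(u\wedge v)\), the \(\Pi\)-terms cancel in \(pr+rp\). This gives
\[
(a,u)\circ(b,v)=(ab-\langle u,v\rangle,\;av+bu).
\]
This is, by definition, the Jordan product of the nondegenerate symmetric bilinear form \(-\langle\,,\rangle\) on the four-dimensional space \(V\). It is the algebra \(J_5\) of the statement, with \(\langle\,,\rangle\) positive definite, which settles the identification \((\Pent,\circ)=J_5\).

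\textbf{Step 2: the Jordan identity.} Commutativity of \(\circ\) is immediate, so only \((x\circ x)\circ(y\circ x)=x\circ((x\circ x)\circ y)\) remains. I would not expand in coordinates. Instead, set \(x=(a,u)\) and use Corollary~\ref{cor:quadratic}: since \(x\circ x=x^2\), we have
\[
x\circ x=\Tr(x)\,x-N(x)1.
\]
The unit is neutral for \(\circ\), so \(L_1=\mathrm{id}\), and bilinearity gives \(L_{x\circ x}=\Tr(x)L_x-N(x)\,\mathrm{id}\), where \(L_z\) denotes \(\circ\)-multiplication by \(z\). This operator is a polynomial in \(L_x\) and therefore commutes with \(L_x\). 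That commutation, \([L_{x\circ x},L_x]=0\), is exactly the Jordan identity. So \((\Pent,\circ)\) is a Jordan algebra and \(\Pent\) is Jordan-admissible.

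The argument uses only that \(\Pent\) is unital quadratic. This is the standard fact that every quadratic algebra is Jordan-admissible, and I would cite it alongside the computation.

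\textbf{Main obstacle.} The only genuinely delicate point is the identification with \(J_5\) rather than Jordan-admissibility itself. It requires fixing the sign and normalisation conventions for the spin factor of \(\langle\,,\rangle\) on \(\R^4\), checking that they match those used in the locally complex literature, and confirming that the bilinear form is Euclidean and not merely nondegenerate. Both facts follow from the norm computation in the earlier Lemma.
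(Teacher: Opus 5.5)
Your proof is correct and follows the paper's route: the paper likewise computes \(p\circ r\) in the splitting \(\R1\oplus V\), observes that the \(\Pi\)-terms cancel to give \((ab-\langle u,v\rangle,\,av+bu)\), and identifies this spin-factor product with \(J_5\). The only difference is that the paper takes the Jordan property of this symmetrisation as the known fact for quadratic algebras, whereas your Step 2 proves it directly by noting that \(L_{x\circ x}=\Tr(x)L_x-N(x)\,\mathrm{id}\) commutes with \(L_x\).
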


\begin{remark}
A direct implication, relevant for working with ODEs, is that $(\Pent,\circ)=J_5$ is a rank three commutative algebra, and these algebras are very well understood  \cite{Walcher1999}.
 Because its anti-symmetric component vanishes during self-multiplication (\(p^2 = p \circ p\)), the equation \(\dot{p}=p^2\) behaves classically and possesses a simple solution. This approach allows us to understand the non-linear dynamics in our later section through the lens of the Jordan algebra $J_5$ (Section \ref{sec:complex-slice}).
\end{remark}

The algebra $\Pent$ is highly nonassociative.

\begin{proposition}
${\rm Nuc}_l(\Pent)={\rm Nuc}_m(\Pent)={\rm Nuc}_r(\Pent)=\R$.
\end{proposition}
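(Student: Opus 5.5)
Since $\R 1$ is central and associates with everything, $\R\subseteq {\rm Nuc}_l(\Pent)\cap{\rm Nuc}_m(\Pent)\cap{\rm Nuc}_r(\Pent)$. So the task is to show that no element with nonzero pure part lies in any nucleus.

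Write $x=a+u$ with $a\in\R$ and $u\in V$. Then $[x,y,z]=[u,y,z]$ by trilinearity, and the same holds in the middle and right slots. So it suffices to show: if $u\in V$ and $[u,A,A]=0$, then $u=0$, and likewise for the middle and right slots. Each nucleus is defined by a linear condition on $u$, so it is a linear subspace of $V$. It is therefore enough to exhibit, for each slot, finitely many pairs of basis elements $(e_b,e_c)$ such that the linear maps
\[
u\mapsto[u,e_b,e_c],\qquad u\mapsto [e_b,u,e_c],\qquad u\mapsto[e_b,e_c,u]
\]
together have trivial common kernel on $V=\Span\{i,j,k,\ell\}$.

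A useful simplification comes from the involution. Since $\overline{pr}=\overline r\,\overline p$, we get $\overline{[x,y,z]}=-[\overline z,\overline y,\overline x]$. Because conjugation is bijective and preserves $\R$ and $V$, $x\in{\rm Nuc}_l$ if and only if $\overline x\in{\rm Nuc}_r$. Hence ${\rm Nuc}_r=\overline{{\rm Nuc}_l}$, and it suffices to treat the left and middle nuclei.

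For the middle nucleus, flexibility-type associators look promising. The proof of the non-flexibility lemma already gives $[i+j,\ell,i+j]=2\sqrt2\neq0$. More systematically, for $u\in V$ I would compute $[e_b,u,e_b]$ for $e_b\in\{i,j,k,\ell\}$ using
\[
uv=-\langle u,v\rangle+\Pi(u\wedge v),
\]
and read off coordinates to get a $4\times4$ (or larger stacked) linear system in $(u_1,\dots,u_4)$. For the left nucleus I would similarly compute $[u,e_b,e_b]=(ue_b)e_b+u$ and $[u,e_b,e_c]$. For example, $[i,i,j]=\tfrac{k+\ell}{2}$ already shows $i\notin{\rm Nuc}_l$. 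In each case the goal is to show that the stacked matrix has rank $4$.

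The main obstacle is the bookkeeping: computing enough associators from the table and Figure~\ref{fig:multiplication-graph} to guarantee full rank. Because of the partial symmetry (a rotation among $i,j,k$ with $\ell$ distinguished), I expect the computations for $i,j,k$ to be cyclic permutations of one another. Then only the associators with arguments from $\{i,\ell\}$ need to be computed by hand, and the rest follow by the cyclic symmetry $i\to j\to k\to i$ fixing $\ell$. That map visibly preserves the multiplication table, so it is an automorphism. The final step is to check that the resulting linear conditions force $u_1=u_2=u_3=u_4=0$.
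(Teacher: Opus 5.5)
Your overall framework is right, and it is more careful than the paper's one-line proof. The paper only says that a tedious check shows the pure basis elements lie outside the nuclei. Since ${\rm Nuc}\cap V$ is a linear subspace, excluding $i,j,k,\ell$ one at a time would not exclude their combinations, so your trivial-common-kernel formulation is what is actually needed. The reduction to pure parts is correct, as is the identity $\overline{[x,y,z]}=-[\bar z,\bar y,\bar x]$ giving ${\rm Nuc}_r(\Pent)=\overline{{\rm Nuc}_l(\Pent)}$. The cyclic automorphism is also fine; it is $S_{\pm 2\pi/3}$ in Theorem~\ref{thm:aut}. Your left-nucleus plan works: for $u=u_1i+u_2j+u_3k+u_4\ell$,
\[
[u,\ell,\ell]=\tfrac12\bigl((4u_1+u_2+u_3)i+(u_1+4u_2+u_3)j+(u_1+u_2+4u_3)k\bigr).
\]
This is an invertible system forcing $u\in\R\ell$, and $[\ell,i,i]=\ell+\tfrac12(j-k)\neq0$ finishes it.

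The step that fails is your systematic middle-nucleus computation. For pure $u,v$, the formula $uv=-\langle u,v\rangle+u\times v$ gives $[v,u,v]=2\langle u\times v,v\rangle\,1$. The pure parts cancel because $(v\times u)\times v=v\times(u\times v)$ by anticommutativity. Since $e_a\times e_b$ always lies in the span of the complementary pair, $\langle e_a\times e_b,e_b\rangle=0$. Hence $[e_b,u,e_b]=0$ for every $u\in\Pent$ and every $e_b\in\{i,j,k,\ell\}$. The $4\times4$ system you propose is therefore the zero matrix, and stacking more such diagonal associators never helps.

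You need off-diagonal arguments. In fact $[v,u,v]=2T_u(v)$, where $T$ is the quadratic form of Equation~\eqref{eq:nonflexibility-quadratic-form} with coefficient $u$ and argument $v$; it has no square terms. Taking $v=i+j,\ i+\ell,\ j+\ell,\ k+\ell$ gives $2\sqrt2\,u_4$, $-2\sqrt2\,u_2$, $-2\sqrt2\,u_3$, $-2\sqrt2\,u_1$ respectively; your example $[i+j,\ell,i+j]=2\sqrt2$ is the first of these. So ${\rm Nuc}_m(\Pent)\cap V=0$. Equivalently, you can use the mixed associators $[e_b,u,e_c]$ with $b\neq c$.
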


\begin{proof}
Since $\R\subset {\rm Nuc}_l(\Pent)$, $\R\subset {\rm Nuc}_m(\Pent)$ and $\R\subset {\rm Nuc}_r(\Pent)$, it remains to show that
these nuclei are contained in $\R$ which is a tedious calculation showing that the pure basis elements are all not contained in the nuclei.
\end{proof}

We now use the basis of $\Pent$ given by
\begin{equation}
\label{eq:adapted-basis}
 \mathbf a=\frac{i+j+k}{\sqrt3},\qquad
 \mathbf b=\frac{i-j}{\sqrt2},\qquad
 \mathbf c=\frac{i+j-2k}{\sqrt6},\qquad
 \mathbf d=\ell.
\end{equation} 

\begin{lemma}
\label{lem:adapted-products}
We have
\[
 \mathbf a^2=\mathbf b^2=\mathbf c^2=\mathbf d^2=-1
\]
and
\begin{align*}
 \mathbf a\mathbf b&=\frac{\mathbf c}{\sqrt2},&
 \mathbf a\mathbf c&=-\frac{\mathbf b}{\sqrt2},&
 \mathbf a\mathbf d&=-\sqrt2\,\mathbf a,\\
 \mathbf b\mathbf c&=\frac{\mathbf a}{\sqrt2}+\sqrt{\frac32}\,\mathbf d,&
 \mathbf b\mathbf d&=\frac{\mathbf b}{\sqrt2},&
 \mathbf c\mathbf d&=\frac{\mathbf c}{\sqrt2}.
\end{align*}
Anti-commutativity yields the reverse products.
\end{lemma}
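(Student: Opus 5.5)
The plan is a direct computation that uses the description of the multiplication on the pure part $V$ given above,
\[
uv=-\langle u,v\rangle+\Pi(u\wedge v),\qquad u,v\in V,
\]
together with the bilinearity and antisymmetry of $\Pi$. There are two steps.

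\emph{Step 1: the basis \eqref{eq:adapted-basis} is orthonormal for $\langle\,,\,\rangle$.} This follows from the coordinate vectors: $(1,1,1,0)/\sqrt3$, $(1,-1,0,0)/\sqrt2$, $(1,1,-2,0)/\sqrt6$ and $(0,0,0,1)$ are pairwise orthogonal unit vectors in $\R^4$. Two consequences follow immediately.
\begin{itemize}
\item Since $u^2=-\norm{u}^2$ for pure $u$, we get $\mathbf a^2=\mathbf b^2=\mathbf c^2=\mathbf d^2=-1$.
\item For distinct adapted vectors the scalar part $-\langle u,v\rangle$ vanishes, so $uv=\Pi(u\wedge v)$.
\end{itemize}
Antisymmetry of $\Pi$ then gives $vu=-uv$, which is the final sentence of the lemma. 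So only the six products listed need to be computed.

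\emph{Step 2: evaluate $\Pi$ on each of the six pairs.} I expand each pair in the canonical basis and apply the defining values $\Pi(i\wedge j)=(k+\ell)/\sqrt2$ and so on. The three products involving $\mathbf d=\ell$ are the quickest, because $i\ell$, $j\ell$ and $k\ell$ are read directly off Table~\ref{tab:multiplication}. For example,
\[
\mathbf a\mathbf d=\tfrac1{\sqrt3}(i\ell+j\ell+k\ell)
=-\tfrac{1}{\sqrt3}\cdot\tfrac{2(i+j+k)}{\sqrt2}
=-\sqrt2\,\mathbf a .
\]
Similarly,
\[
\mathbf b\mathbf d=\tfrac1{\sqrt2}(i\ell-j\ell)=\tfrac12(i-j)=\mathbf b/\sqrt2 ,
\]
and $\mathbf c\mathbf d$ is handled the same way.

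For the products among $\mathbf a,\mathbf b,\mathbf c$, I write each as a combination of $i\wedge j$, $j\wedge k$ and $k\wedge i$. For instance,
\[
\mathbf a\wedge\mathbf b=\tfrac{1}{\sqrt6}\bigl(-2\,i\wedge j+j\wedge k+k\wedge i\bigr).
\]
Applying $\Pi$ gives a vector in $\Span\{i,j,k,\ell\}$. I then re-express this vector in the adapted basis by taking inner products with $\mathbf a,\mathbf b,\mathbf c,\mathbf d$, which is legitimate by the orthonormality from Step 1.

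\emph{Where care is needed.} The only delicate point is the $\ell$-component. In $\mathbf a\mathbf b$ and $\mathbf a\mathbf c$ the $\ell$-contributions of the three $\Pi$-values should cancel, because the coefficients of $i\wedge j$, $j\wedge k$, $k\wedge i$ sum to zero. In $\mathbf b\mathbf c$ the sum is nonzero, which produces the term $\sqrt{3/2}\,\mathbf d$. Tracking these signs and normalisations, which mix the factors $1/\sqrt2$, $1/\sqrt3$ and $1/\sqrt6$, is the main bookkeeping risk. As a consistency check on the $\ell$-components, each product can be verified against its norm, e.g.
\[
\norm{\mathbf b\mathbf c}^2=\tfrac12+\tfrac32=2 .
\]
This value can be computed independently as $\norm{\Pi(\mathbf b\wedge\mathbf c)}^2$ from the Gram matrix of $\Pi$ on $\Lambda^2V$.
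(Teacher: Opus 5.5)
Your proposal is correct and takes essentially the same route as the paper: substitute the adapted basis into the canonical multiplication table, expand by bilinearity, and use orthogonality of the basis change to get the squares $-1$. Writing $uv=-\langle u,v\rangle+\Pi(u\wedge v)$ disposes of the scalar parts and the reverse products in one step, which is a tidy packaging of the same computation; your wedge expansions and the $\ell$-coefficient sums ($0$, $0$ and $6$ for $\mathbf a\mathbf b$, $\mathbf a\mathbf c$ and $\mathbf b\mathbf c$) check out.
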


\begin{proof}
Substitute \eqref{eq:adapted-basis} into the canonical multiplication table
and use bilinearity.  For example,
\[
 \mathbf a\mathbf b
 =\frac1{\sqrt6}(i+j+k)(i-j)=\frac{\mathbf c}{\sqrt2},
\]
\[
 \mathbf a\mathbf d
 =\frac1{\sqrt3}(i\ell+j\ell+k\ell)=-\sqrt2\,\mathbf a,
\]
and
\[
 \mathbf b\mathbf c
 =\frac1{\sqrt{12}}(i-j)(i+j-2k)
 =\frac{\mathbf a}{\sqrt2}+\sqrt{\frac32}\,\mathbf d.
\]
The remaining identities follow similarly.  The change of pure basis is
orthogonal, so the four squares remain \(-1\).
\end{proof}

For \(a\in\R\), define the algebra isomorphism $S_a:\Pent\to \Pent $,
\begin{align}
 S_a(1)&=1,&S_a(\mathbf a)&=\mathbf a,&S_a(\mathbf d)&=\mathbf d,
 \label{eq:R-fixed}\\
 S_a(\mathbf b)&=\cos(a)\,\mathbf b+\sin(a)\,\mathbf c,&
 S_a(\mathbf c)&=-\sin(a)\,\mathbf b+\cos(a)\,\mathbf c.
 \label{eq:R-plane}
\end{align}
We now show that all isomorphisms of $\Pent$ have this form.

\begin{theorem}\label{thm:aut}
${\rm Aut}_F(\Pent)=\{ S_a\,|\, a\in \R \}\cong \SO(2)$.
\end{theorem}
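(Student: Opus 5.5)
We prove the two inclusions separately. First we check that each $S_a$ is an automorphism. Since $S_a$ fixes $1$ and is an orthogonal map of $V$ that fixes $\mathbf a,\mathbf d$, it preserves $\langle\,,\,\rangle$. By the decomposition $uv=-\langle u,v\rangle+\Pi(u\wedge v)$, it therefore suffices to show that $S_a\circ\Pi=\Pi\circ\Lambda^2S_a$ on the six bivectors of the adapted basis, using Lemma~\ref{lem:adapted-products}. Put $\mathbf b'=\cos a\,\mathbf b+\sin a\,\mathbf c$ and $\mathbf c'=-\sin a\,\mathbf b+\cos a\,\mathbf c$. The products $\mathbf d\mathbf b=-\mathbf b/\sqrt2$ and $\mathbf d\mathbf c=-\mathbf c/\sqrt2$ say that $L_{\mathbf d}$ acts as $-1/\sqrt2$ on $\Span\{\mathbf b,\mathbf c\}$, which is rotation invariant. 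The products $\mathbf a\mathbf b=\mathbf c/\sqrt2$ and $\mathbf a\mathbf c=-\mathbf b/\sqrt2$ say that $L_{\mathbf a}$ acts as $1/\sqrt2$ times a rotation by $\pi/2$, which commutes with $S_a$. Finally $\mathbf b'\mathbf c'=(\cos^2a+\sin^2a)\,\mathbf b\mathbf c=\mathbf b\mathbf c$, and $\mathbf b\mathbf c$ is fixed by $S_a$. The products $\mathbf a\mathbf d$ are fixed trivially. So each $S_a$ is an automorphism, and $a\mapsto S_a$ is clearly a homomorphism from $\R/2\pi\Z$ onto a group isomorphic to $\SO(2)$.

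For the converse, let $\varphi\in\Aut(\Pent)$. Then $\varphi(1)=1$, and $\varphi$ preserves the quadratic relation of Corollary~\ref{cor:quadratic}, so it preserves $N$ and $\Tr$. It follows that $\varphi(V)=V$ and $\varphi|_V\in O(4)$. Moreover $\varphi\Pi(u\wedge v)=\Pi(\varphi u\wedge\varphi v)$, because $\Pi(u\wedge v)=(uv-vu)/2$. The plan is to extract intrinsic data from $\Pi$ that pins down $\mathbf a$ and $\mathbf d$. Consider the operator $L_u|_V$ composed with the projection to $V$, i.e.\ $P_u:v\mapsto \Pi(u\wedge v)$. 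Its characteristic data are invariants: $\varphi$ conjugates $P_u$ to $P_{\varphi u}$. Similarly consider $T(u)=\Tr_V(P_u)$, which is a linear functional on $V$ invariant under $\varphi$. From Lemma~\ref{lem:adapted-products} one reads off $P_{\mathbf d}=-\frac1{\sqrt2}\cdot{}$ on $\mathbf b,\mathbf c$ and $\sqrt2$ on $\mathbf a$ (since $\mathbf d\mathbf a=\sqrt2\mathbf a$), with $P_{\mathbf d}\mathbf d=0$; hence $\Tr P_{\mathbf d}=0$. We then compute $T$ on the basis: $P_{\mathbf a}$ maps $\mathbf b\mapsto\mathbf c/\sqrt2$, $\mathbf c\mapsto-\mathbf b/\sqrt2$ and $\mathbf d\mapsto-\sqrt2\mathbf a$, so $\Tr P_{\mathbf a}=0$. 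Since the trace is zero everywhere, $T$ is useless and a different invariant is needed.

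Instead we use the symmetric bilinear form $\beta(u,w)=\Tr_V(P_uP_w)$ (a Killing-type form), which $\varphi$ must preserve, together with $\langle\,,\,\rangle$. Hence $\varphi$ must preserve the eigenspaces of $\beta$ relative to $\langle\,,\,\rangle$. We compute $\beta$ in the adapted basis; all the needed products are listed in Lemma~\ref{lem:adapted-products}. We expect $\mathbf a$, $\mathbf d$ and $\Span\{\mathbf b,\mathbf c\}$ to be distinct eigenspaces, since $\beta$ is manifestly invariant under the $S_a$ and the roles of $\mathbf a$ and $\mathbf d$ are visibly asymmetric. If a degeneracy between $\mathbf a$ and $\mathbf d$ occurs, we supplement with the cubic invariant $\Tr(P_u^3)$ or with the observation that $\mathbf d$ is characterised as an eigenvector of $P_{\mathbf d}$ with spectrum $\{\sqrt2,-1/\sqrt2,-1/\sqrt2,0\}$, which differs from that of $P_{\mathbf a}$ (eigenvalues $\pm i/\sqrt2$ on $\Span\{\mathbf b,\mathbf c\}$ and a nilpotent part on $\Span\{\mathbf a,\mathbf d\}$). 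Granting the separation, $\varphi(\mathbf a)=\pm\mathbf a$, $\varphi(\mathbf d)=\pm\mathbf d$, and $\varphi$ restricts to an element of $O(2)$ on $\Span\{\mathbf b,\mathbf c\}$. The signs are then fixed by the relations. From $\mathbf a\mathbf d=-\sqrt2\mathbf a$ we get $\varphi(\mathbf a)\varphi(\mathbf d)=-\sqrt2\varphi(\mathbf a)$, which forces $\varphi(\mathbf d)=\mathbf d$. From $\mathbf b\mathbf c=\mathbf a/\sqrt2+\sqrt{3/2}\,\mathbf d$ we get $\det(\varphi|_{\Span\{\mathbf b,\mathbf c\}})\,(\mathbf a/\sqrt2+\sqrt{3/2}\,\mathbf d)=\varphi(\mathbf a)/\sqrt2+\sqrt{3/2}\,\mathbf d$. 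The $\mathbf d$-component forces the determinant to be $1$, and then $\varphi(\mathbf a)=\mathbf a$. Hence $\varphi=S_a$ for some $a$.

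The main obstacle is the middle step: proving rigorously that an automorphism must fix the lines $\R\mathbf a$ and $\R\mathbf d$. We will first try the form $\beta$, and fall back on the spectral description of the operators $P_u$ if $\beta$ turns out to be degenerate.
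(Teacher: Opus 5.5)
Your check that each $S_a$ is an automorphism is correct. So are the reduction to $\varphi|_V\in O(4)$ with $\varphi P_u\varphi^{-1}=P_{\varphi u}$ and the closing sign/determinant argument via $\mathbf a\mathbf d$ and $\mathbf b\mathbf c$. The gap is the step you flag yourself: nothing in the proposal shows that $\varphi$ preserves $\R\mathbf a$ and $\R\mathbf d$.

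Your first tool does not give what you expect. From the matrix of $P_u$ in the adapted basis, with $u=u_1\mathbf a+u_2\mathbf b+u_3\mathbf c+u_4\mathbf d$, one gets
\[
\Tr(P_u^2)=3u_4^2-u_1^2-u_2^2-u_3^2,
\qquad\text{i.e.}\qquad
\beta(u,w)=4\langle u,\mathbf d\rangle\langle w,\mathbf d\rangle-\langle u,w\rangle .
\]
Relative to $\langle\,,\,\rangle$, the form $\beta$ therefore has only two eigenspaces: $\R\mathbf d$ with eigenvalue $3$, and the three-dimensional $\mathbf d^{\perp}=\Span\{\mathbf a,\mathbf b,\mathbf c\}$ with eigenvalue $-1$. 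The degeneracy is between $\mathbf a$ and $\Span\{\mathbf b,\mathbf c\}$, not between $\mathbf a$ and $\mathbf d$, so $\beta$ alone can never give $\varphi(\mathbf a)=\pm\mathbf a$. Your fallback does not close the gap either. Showing that the spectrum of $P_{\mathbf d}$ differs from that of $P_{\mathbf a}$ only rules out $\varphi(\mathbf d)=\pm\mathbf a$. A priori $\varphi(\mathbf d)$ is an arbitrary unit vector of $V$, so you need a property that singles out $\R\mathbf d$ among all of $V$, not just among two candidates.

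The missing idea is to isolate $\R\mathbf d$ first and then let $P_{\mathbf d}$ do the separating.

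1. Your own invariant, once computed, handles the first step: $\R\mathbf d$ is the $3$-eigenspace of $\beta$, so $\varphi(\mathbf d)=\pm\mathbf d$. The paper reaches the same conclusion differently, by noting that $P_u$ is self-adjoint if and only if $u\in\R\mathbf d$, a property preserved by orthogonal conjugation.
2. Then $\varphi P_{\mathbf d}\varphi^{-1}=\pm P_{\mathbf d}$. The minus sign is excluded because the spectrum $\{\sqrt2,-1/\sqrt2,-1/\sqrt2,0\}$ is not symmetric under negation.
3. Hence $\varphi$ commutes with $P_{\mathbf d}=\operatorname{diag}(\sqrt2,-1/\sqrt2,-1/\sqrt2,0)$ and preserves its eigenspaces $\R\mathbf a$, $\Span\{\mathbf b,\mathbf c\}$ and $\R\mathbf d$.
4. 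This gives the separation you "granted", and your $\mathbf b\mathbf c$ comparison then finishes exactly as in the paper.
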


\begin{proof}
Let $\Phi \in{\rm Aut}_F(\Pent)$. Since $\Phi((1,0))=(1,0)$, we know  $\Phi((a,0))=(a,0)$ for all $a\in \R$, so $\Phi|_\R=id$. Now
$$\Phi((a,u)(b,v))=\Phi(ab-\langle u,v\rangle,av+bu+\Pi(u\wedge v))=\Phi((a,u))\Phi((b,v)) $$
is equivalent to 
$$(\Phi(a)\Phi(b)-\Phi((\langle u,v\rangle),a\Phi(v)+b \Phi(u)+\Phi(\Pi(u\wedge v)))=(a,\Phi(u))(b,\Phi(v))$$ 
$$= (ab-\langle \Phi(u),\Phi(v)\rangle,a\Phi(v)+b\Phi(u)+\Pi(\Phi(u)\wedge \Phi(v)))$$
which in turn implies
$$\langle u,v\rangle=\langle \Phi(u),\Phi(v)\rangle  $$
and
\begin{equation}
\label{eq:cross-equivariance}
\Phi(\Pi(u\wedge v))=\Pi(\Phi(u)\wedge \Phi(v)).
\end{equation}
Hence $\Phi|_V $ is an isometry of $N|_V $, since we know that scalar part is the negative of the symmetric bilinear form  $\frac{1}{2}N(u,v)$ associated to $N$. Thus $\Phi$  preserves the inner product and therefore lies in \(O(V)\).

For ease of notation, we write $\Pi(u\wedge v)$ subsequently as $u\times v$. 

We have
\[
 \mathbf a^2=\mathbf b^2=\mathbf c^2=\mathbf d^2=-1
\]
and
\begin{align}
 \mathbf a\mathbf b&=\frac{\mathbf c}{\sqrt2},&
 \mathbf a\mathbf c&=-\frac{\mathbf b}{\sqrt2},&
 \mathbf a\mathbf d&=-\sqrt2\,\mathbf a,\label{eq:adapted1}\\
 \mathbf b\mathbf c&=\frac{\mathbf a}{\sqrt2}+\sqrt{\frac32}\,\mathbf d,&
 \mathbf b\mathbf d&=\frac{\mathbf b}{\sqrt2},&
 \mathbf c\mathbf d&=\frac{\mathbf c}{\sqrt2}.\label{eq:adapted2}
\end{align}
Take the left multiplication
\[
 L_u:V\to V,\qquad L_u(v)=u\times v.
\]
If
\(u=\alpha\mathbf a+\beta\mathbf b+\gamma\mathbf c+\delta\mathbf d\),
then, with respect to
\((\mathbf a,\mathbf b,\mathbf c,\mathbf d)\), $L_u$ is represented by the matrix
\begin{equation}
\label{eq:Lu-matrix}
 \begin{pmatrix}
 \sqrt2\delta&-\gamma/\sqrt2&\beta/\sqrt2&-\sqrt2\alpha\\
 \gamma/\sqrt2&-\delta/\sqrt2&-\alpha/\sqrt2&\beta/\sqrt2\\
 -\beta/\sqrt2&\alpha/\sqrt2&-\delta/\sqrt2&\gamma/\sqrt2\\
 0&-\sqrt{3/2}\,\gamma&\sqrt{3/2}\,\beta&0
 \end{pmatrix}.
\end{equation}
The map \(L_u\) is self-adjoint with respect to the Euclidean inner product
if and only if \(u\in\R\mathbf d\). Let $\operatorname{spec}(L_u)$ denote the spectrum (i.e., the set of eigenvalues) of $L_u$. Then
\[
 \operatorname{spec}(L_{\mathbf d})
 =\left\{\sqrt2,-\frac1{\sqrt2},-\frac1{\sqrt2},0\right\}:
\]
 Equating \eqref{eq:Lu-matrix} with its transpose first gives
\(\gamma=0\) from the \((1,2)\) entry, \(\beta=0\) from the
\((1,3)\) entry, and \(\alpha=0\) from the \((1,4)\) entry.  Conversely,
if \(u=\delta\mathbf d\), the matrix is diagonal and hence self-adjoint.
For \(u=\mathbf d\), \eqref{eq:Lu-matrix} becomes the diagonal matrix
\[
 L_{\mathbf d}
 =\operatorname{diag}\left(\sqrt2,-\frac1{\sqrt2},
 -\frac1{\sqrt2},0\right),
\]
which gives the set of eigenvalues
\[
 \operatorname{spec}(L_{\mathbf d})
 =\left\{\sqrt2,-\frac1{\sqrt2},-\frac1{\sqrt2},0\right\}.
\]
The multiplication in $\Pent$ shows directly that
every \(S_a\) preserves all basis products.  Hence
\(S_a\in\Aut(\Pent)\), and
\(S_a S_b=S_{a+b}\).

Conversely, let \(\Phi\in\Aut(\Pent)\),  then
Equation \eqref{eq:cross-equivariance} implies
\begin{equation}
\label{eq:L-conjugacy}
 L_{\Phi(u)}=\Phi L_u \Phi^{-1}
\end{equation}
for all $u\in V$.
Since orthogonal conjugacy preserves self-adjointness,

\(\Phi(\mathbf d)=\pm\mathbf d\).
The spectra of \(L_{\mathbf d}\) and \(L_{-\mathbf d}=-L_{\mathbf d}\)
are different, while Equation \ref{eq:L-conjugacy} preserves spectra.  Therefore
\(\Phi(\mathbf d)=\mathbf d\).

It follows from \eqref{eq:L-conjugacy} that \(\Phi|_V\) commutes with
\(L_{\mathbf d}\).  The eigenspaces of \(L_{\mathbf d}\) are $\R\mathbf a,$  $E=\Span\{\mathbf b,\mathbf c\},$ and $ \R\mathbf d,$
with respective eigenvalues $\sqrt2,$ $-1/\sqrt2$ and 0.  Hence
\(\Phi(\mathbf a)=\varepsilon\mathbf a\), where \(\varepsilon=\pm1\), and
\(\Phi|_E\in O(2)\).

Now consider
\[
 \mathbf b\times\mathbf c
 =\frac{\mathbf a}{\sqrt2}+\sqrt{\frac32}\,\mathbf d.
\]
Because \(\Lambda^2E\) is one-dimensional,
\[
 \Phi(\mathbf b)\times\Phi(\mathbf c)
 =\det(\Phi|_E)\left(\frac{\mathbf a}{\sqrt2}
               +\sqrt{\frac32}\,\mathbf d\right).
\]
On the other hand, we have
\[
 \Phi(\mathbf b\times\mathbf c)
 =\frac{\varepsilon\mathbf a}{\sqrt2}
  +\sqrt{\frac32}\,\mathbf d.
\]
Comparison of the \(\mathbf d\)-components gives \(\det(\Phi|_E)=1\), and
comparison of the \(\mathbf a\)-components gives \(\varepsilon=1\).
Thus \(\Phi|_E\in SO(2)\), \(\Phi(\mathbf a)=\mathbf a\), and
\(\Phi(\mathbf d)=\mathbf d\).  Consequently \(\Phi=S_a\) for some \(a\in \R\).
\end{proof}

 \begin{corollary}[Fixed subalgebras]
\label{cor:fixed-subalgebra}
The fixed-point algebra of the automorphism group is given by
\[
 \Pent^{\Aut(\Pent)}
 =\Span_{\R}\{1,\mathbf a,\mathbf d\}.
\]
Every subalgebra generated by 1 and a non-real $p\in \Pent^{\Aut(\Pent)}$
is isomorphic to the complex numbers.
\end{corollary}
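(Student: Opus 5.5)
The plan is to read off the fixed-point set directly from the explicit description of \(\Aut(\Pent)\) in Theorem~\ref{thm:aut}, and then to obtain the second assertion from local complexity (Corollary~\ref{cor:quadratic}(iv)).

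First I write an arbitrary element in the adapted basis \eqref{eq:adapted-basis} as
\[
p=x_0+\alpha\mathbf a+\beta\mathbf b+\gamma\mathbf c+\delta\mathbf d .
\]
By Theorem~\ref{thm:aut}, \(p\) is fixed by every automorphism if and only if \(S_a(p)=p\) for all \(a\in\R\). Using \eqref{eq:R-fixed} and \eqref{eq:R-plane}, this condition reads
\[
(\cos a-1)\beta-\sin a\,\gamma=0,\qquad \sin a\,\beta+(\cos a-1)\gamma=0
\]
for all \(a\in\R\). Taking \(a=\pi/2\) gives \(-\beta-\gamma=0\) and \(\beta-\gamma=0\), so \(\beta=\gamma=0\). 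Conversely, \(S_a\) fixes \(1\), \(\mathbf a\) and \(\mathbf d\) by \eqref{eq:R-fixed}, so the fixed set is exactly \(\Span_{\R}\{1,\mathbf a,\mathbf d\}\). I would also observe that this set is a linear subspace, since it is an intersection of the kernels of the maps \(S_a-\mathrm{id}\).

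It is worth remarking that \(\Span_{\R}\{1,\mathbf a,\mathbf d\}\) is not closed under multiplication. Indeed, Lemma~\ref{lem:adapted-products} gives \(\mathbf a\mathbf d=-\sqrt2\,\mathbf a\), which stays inside, but \(\mathbf a^2=\mathbf d^2=-1\) and \(\mathbf d\mathbf a=\sqrt2\,\mathbf a\). In fact all products of these elements do stay inside, so the span is closed after all. I would check this via Lemma~\ref{lem:adapted-products}: \(\mathbf a\mathbf d=-\sqrt2\mathbf a\), \(\mathbf d\mathbf a=\sqrt2\mathbf a\), and the squares are real. This justifies calling it a fixed-point \emph{algebra}. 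In any case it is a subalgebra, because automorphisms fix products of fixed elements.

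For the second assertion, let \(p\in\Span_{\R}\{1,\mathbf a,\mathbf d\}\setminus\R\). By Corollary~\ref{cor:quadratic}(iv), the unital subalgebra generated by \(p\) is \(\Span\{1,p\}\cong\C\). Explicitly, \(v=\Imag(p)\neq0\) satisfies \(v^2=-N(v)<0\), so \(v/\sqrt{N(v)}\) is a square root of \(-1\).

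There is no real obstacle here. The only step that needs care is invoking the full classification in Theorem~\ref{thm:aut}, so that testing against the maps \(S_a\) alone suffices.
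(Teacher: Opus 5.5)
Your proof is correct and follows essentially the paper's route: you read off the fixed vectors from the explicit rotations $S_a$ of Theorem~\ref{thm:aut}, obtain closure from $\mathbf a^2=\mathbf d^2=-1$, $\mathbf a\mathbf d=-\sqrt2\,\mathbf a$ and $\mathbf d\mathbf a=\sqrt2\,\mathbf a$, and deduce the second claim from local complexity; your observation that elements fixed by automorphisms automatically form a subalgebra is an even quicker justification of closure. Delete the sentence asserting that $\Span_{\R}\{1,\mathbf a,\mathbf d\}$ is not closed under multiplication, since it is false and you immediately retract it.
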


\begin{proof}
A vector is fixed by every planar rotation if and only if its
\(\mathbf b\)- and \(\mathbf c\)-coordinates vanish.  Closure under
multiplication follows from
\(\mathbf a^2=\mathbf d^2=-1\),
\(\mathbf a\mathbf d=-\sqrt2\mathbf a\), and
\(\mathbf d\mathbf a=\sqrt2\mathbf a\).  The rest of the assertion is trivial.
\end{proof}

In particular, $ \Pent^{\Aut(\Pent)}$ contains the subalgebras
\[
 \mathbb C_{\mathbf a}=\Span_{\R}\{1,\mathbf a\}\cong \C,
 \qquad
 \mathbb C_{\mathbf d}=\Span_{\R}\{1,\mathbf d\}\cong \C.
\]
These isomorphisms are canonical: $\mathbb C_{\mathbf a}$ and
$\mathbb C_{\mathbf d}$ have the ordinary complex multiplication tables.

The following formula is useful for symbolic computation and for translating an equation over $\Pent$ into five coupled real equations.

\begin{lemma}
Let $p=w+xi+yj+zk+t\ell$
and $ r=a+bi+cj+dk+e\ell.$
Then
\[
  pr=A+Bi+Cj+Dk+E\ell,
\]
where
\begingroup
\small
\begin{align*}
  A&=wa-xb-yc-zd-te,\\
  B&=wb+ax+\frac{yd-ye-zc-ze+tc+td}{\sqrt{2}},\\
  C&=wc+ay+\frac{-xd-xe+zb-ze+tb+td}{\sqrt{2}},\\
  D&=wd+az+\frac{xc-xe-yb-ye+tb+tc}{\sqrt{2}},\\
  E&=we+at+\frac{xc-xd-yb+yd+zb-zc}{\sqrt{2}}.
\end{align*}
\endgroup
\end{lemma}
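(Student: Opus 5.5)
The plan is to use the decomposition already established for quadratic algebras. For $p=(w,u)$ and $r=(a,v)$ with $u=xi+yj+zk+t\ell$ and $v=bi+cj+dk+e\ell$, the multiplication formula gives
\[
 pr=(wa-\langle u,v\rangle,\; wv+au+\Pi(u\wedge v)).
\]
The scalar part $A=wa-xb-yc-zd-te$ follows at once, because $i,j,k,\ell$ are orthonormal for $\langle\,,\,\rangle$. The pure part consists of the linear terms $wv+au$ together with $\Pi(u\wedge v)$. The linear terms contribute $wb+ax$, $wc+ay$, $wd+az$ and $we+at$ to $B,C,D,E$ respectively, so everything reduces to expanding $\Pi(u\wedge v)$.

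Next, expand $u\wedge v$ in the six basis bivectors. The coefficients are the $2\times 2$ minors:
\begin{align*}
 u\wedge v={}&(xc-yb)\,i\wedge j+(yd-zc)\,j\wedge k+(zb-xd)\,k\wedge i\\
 &+(tb-xe)\,\ell\wedge i+(tc-ye)\,\ell\wedge j+(td-ze)\,\ell\wedge k.
\end{align*}
Now apply the defining values of $\Pi$. Each basis bivector maps to $(e_c+e_d)/\sqrt2$, where $e_c,e_d$ is the complementary pair. Collecting the coefficient of each pure basis element gives the result. For example, $i$ appears only in $\Pi(j\wedge k)$, $\Pi(\ell\wedge j)$ and $\Pi(\ell\wedge k)$, so its coefficient is
\[
 \tfrac{1}{\sqrt2}\bigl[(yd-zc)+(tc-ye)+(td-ze)\bigr],
\]
which matches $B$. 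The coefficients of $j$ come from $i\wedge k$ (with a sign, since the table gives $\Pi(k\wedge i)$), $\ell\wedge i$ and $\ell\wedge k$. The coefficients of $k$ come from $i\wedge j$, $\ell\wedge i$ and $\ell\wedge j$. The coefficients of $\ell$ come from $i\wedge j$, $j\wedge k$ and $k\wedge i$.

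The only real obstacle is sign bookkeeping. The table lists $\Pi(k\wedge i)$ and $\Pi(\ell\wedge\cdot)$, so I will use the orientation exactly as tabulated rather than reorienting, and I will check each of $C,D,E$ against two table entries, for instance $ji=-(k+\ell)/\sqrt2$ and $\ell i=(j+k)/\sqrt2$. As an independent check, the terms in $\Pi(u\wedge v)$ are antisymmetric under swapping $u$ and $v$, so the pure part of $pr+rp$ must equal $2(wv+au)$. The displayed fractions satisfy this, since each is a sum of antisymmetric minors.
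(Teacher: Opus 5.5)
Your proposal is correct and is essentially the paper's omitted (``elementary but tedious'') proof: a direct bilinear expansion of the multiplication table, which you organise efficiently through $pr=(wa-\langle u,v\rangle,\,wv+au+\Pi(u\wedge v))$ and the six minors of $u\wedge v$. Your minors, and the coefficients collected from them, reproduce $A,B,C,D,E$ exactly, and there is no hidden sign issue because you expanded $u\wedge v$ in exactly the tabulated orientations $i\wedge j$, $j\wedge k$, $k\wedge i$, $\ell\wedge i$, $\ell\wedge j$, $\ell\wedge k$.
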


The proof is elementary but tedious.

\section{A  functional calculus employing quadratic locally complex algebras}\label{sec:calculus}

This section gives a first taste for an analytic
extension of a quadratic real algebra $A$ of dimension $n$ with basis $1,e_1,\dots,e_{n-1}$, with scalar canonical involution
\[
  p=p_0+p_1e_1+\dots p_{n-1}e_{n-1}\mapsto  \overline{p}=p_0-p_1e_1-p_2e_2-\dots -p_{n-1}e_{n-1},
\]
and an anisotropic positive definite norm 
$$N(p)=p_0^2+p_1^2+\dots+p_{n-1}^2.$$
 It is deliberately a one-element functional calculus, not yet a general multivariable theory of some analysis using these algebras.
 The key fact we use is that such an algebra is locally complex by definition, so every element $p$ lies in an associative  subalgebra isomorphic to the complex numbers.

We note that our $N$ is nondegenerate, anisotropic, positive definite and that
\[
  \norm{p}=\sqrt{N(p)},
\]
where $\norm{\phantom{p}}$ is the Euclidean length of the coordinate vector $(p_0,\dots,p_{n-1})$ of $p$. We will use the notation from Section \ref{subsec:quadr}.

\subsection{Slice decomposition}

Let $p=a+u,$ $a\in\R,$ $u\in V$.
Set $\rho=\norm{u}$ then $\rho>0$. Define $ I=\frac{u}{\rho},$ then $ I^2=-1$
and
\[
  p=a+\rho I.
\]
All powers of $p$ belong to the associative slice $\Span_{\R}\{1,I\}\cong \mathbb{C}$. Therefore we can write down a well-defined real-coefficient power series
\[
  f(p)=\sum_{n=0}^{\infty}c_np^n
\]
of elements $p\in \Pent$ ($c_n\in \R$) and talk about convergence of the series, employing the Euclidean norm.

\subsection{The exponential series}

Define
\[
  \exp(p)=\sum_{n=0}^{\infty}\frac{p^n}{n!}.
\]
Because every $a\in \R$ commutes with $\rho I$ inside the  slice $\Span_{\R}\{1,I\}$, we have
\[
  \exp(a+\rho I)=e^a\exp(\rho I).
\]
We have
\[
  (\rho I)^{2n}=(-1)^n\rho^{2n},
\]
\[
  (\rho I)^{2n+1}=(-1)^n\rho^{2n+1}I.
\]
Hence we obtain
\[
  \exp(\rho I)
  =\sum_{n=0}^{\infty}\frac{(-1)^n\rho^{2n}}{(2n)!}
   +I\sum_{n=0}^{\infty}\frac{(-1)^n\rho^{2n+1}}{(2n+1)!}.
\]
Recognising the cosine and sine series gives
\[
  \exp(\rho I)=\cos\rho+I\sin\rho,
\]
therefore we get
\[
  \exp(p)=e^a\left(\cos\rho+I\sin\rho\right)
  =e^a\left(\cos\rho+\frac{u}{\rho}\sin\rho\right).
\]
At $\rho=0$, the expression is interpreted by continuity and reduces to $\exp(p)=e^a$.

\subsection{Sine and cosine}

Define
\[
  \sin(p)=\sum_{n=0}^{\infty}\frac{(-1)^np^{2n+1}}{(2n+1)!},
\]
\[
  \cos(p)=\sum_{n=0}^{\infty}\frac{(-1)^np^{2n}}{(2n)!}.
\]
Within the  slice $\Span_{\R}\{1,I\}$, the usual addition formulas are valid. First, the pure argument $\rho I$ satisfies
\[
  \cos(\rho I)
  =\sum_{n=0}^{\infty}\frac{(-1)^n(\rho I)^{2n}}{(2n)!}
  =\sum_{n=0}^{\infty}\frac{\rho^{2n}}{(2n)!}
  =\cosh\rho,
\]
and
\[
  \sin(\rho I)
  =\sum_{n=0}^{\infty}\frac{(-1)^n(\rho I)^{2n+1}}{(2n+1)!}
  =I\sum_{n=0}^{\infty}\frac{\rho^{2n+1}}{(2n+1)!}
  =I\sinh\rho.
\]
Consequently,
\begin{align*}
  \sin(p)
  &=\sin(a+\rho I)\\
  &=\sin a\,\cos(\rho I)+\cos a\,\sin(\rho I)\\
  &=\sin a\,\cosh\rho+I\cos a\,\sinh\rho,
\end{align*}
so
\[
\sin(p)=\sin a\,\cosh\rho
  +\frac{u}{\rho}\cos a\,\sinh\rho.
\]
Similarly,
\begin{align*}
  \cos(p)
  &=\cos(a+\rho I)\\
  &=\cos a\,\cos(\rho I)-\sin a\,\sin(\rho I)\\
  &=\cos a\,\cosh\rho-I\sin a\,\sinh\rho,
\end{align*}
and therefore
\[
\cos(p)=\cos a\,\cosh\rho
  -\frac{u}{\rho}\sin a\,\sinh\rho.
\]
The factors $\sinh\rho/\rho$ are interpreted by their continuous limit $1$ when $\rho=0$.

\subsection{Hyperbolic sine and cosine}

Define
\[
  \sinh(p)=\sum_{n=0}^{\infty}\frac{p^{2n+1}}{(2n+1)!},
\]
\[
  \cosh(p)=\sum_{n=0}^{\infty}\frac{p^{2n}}{(2n)!}.
\]
For the pure slice argument,
\[
  \cosh(\rho I)
  =\sum_{n=0}^{\infty}\frac{(\rho I)^{2n}}{(2n)!}
  =\sum_{n=0}^{\infty}\frac{(-1)^n\rho^{2n}}{(2n)!}
  =\cos\rho,
\]
and
\[
  \sinh(\rho I)
  =\sum_{n=0}^{\infty}\frac{(\rho I)^{2n+1}}{(2n+1)!}
  =I\sum_{n=0}^{\infty}\frac{(-1)^n\rho^{2n+1}}{(2n+1)!}
  =I\sin\rho.
\]
Using the addition formulas in the associative slice gives
\begin{align*}
  \sinh(p)
  &=\sinh(a+\rho I)\\
  &=\sinh a\,\cosh(\rho I)+\cosh a\,\sinh(\rho I)\\
  &=\sinh a\,\cos\rho+I\cosh a\,\sin\rho,
\end{align*}
hence
\[
\sinh(p)=\sinh a\,\cos\rho
  +\frac{u}{\rho}\cosh a\,\sin\rho.
\]
Likewise,
\begin{align*}
  \cosh(p)
  &=\cosh(a+\rho I)\\
  &=\cosh a\,\cosh(\rho I)+\sinh a\,\sinh(\rho I)\\
  &=\cosh a\,\cos\rho+I\sinh a\,\sin\rho,
\end{align*}
so
\[
\cosh(p)=\cosh a\,\cos\rho
  +\frac{u}{\rho}\sinh a\,\sin\rho.
\]

\subsection{Useful special cases and identities}

For  $u\in V$ with $\rho=\norm{u}$, we have
\[
  \exp(u)=\cos\rho+\frac{u}{\rho}\sin\rho,
\]
\[
  \sin(u)=\frac{u}{\rho}\sinh\rho,
  \qquad
  \cos(u)=\cosh\rho,
\]
\[
  \sinh(u)=\frac{u}{\rho}\sin\rho,
  \qquad
  \cosh(u)=\cos\rho.
\]
All familiar identities involving only  functions of the same $p\in A$ remain valid because the functions lie in the same associative slice. In particular,
\[
  \exp(p)\exp(-p)=1,
\]
\[
  \sin^2(p)+\cos^2(p)=1,
\]
and
\[
  \cosh^2(p)-\sinh^2(p)=1.
\]
Similarly,
\[
  \exp(-p)\bigl(\sin(p)\cos(p)\bigr)
  =\bigl(\exp(-p)\sin(p)\bigr)\cos(p),
\]
because all three factors are functions of the same element $p\in A$.
The situation becomes very intricate and interesting when analytic functions are evaluated at independent different elements of $A$ which do not lie in the same slice.   In that case, the
non-associativity of $A $ becomes visible immediately and is highly relevant. We will demonstrate this on the five-dimensional quadratic algebra $\Pent$.

\subsection{Pentarnion functional calculus, analytic functions with independent arguments}
 Let
\[
  p_m=a_m+u_m,
  \qquad
  \rho_m=\norm{u_m},
  \qquad
  m=1,2,3.
\]
Our previous formulas show that
\[
  \exp(p_1)=E_0+E_1u_1,
  \qquad
  E_1=e^{a_1}\frac{\sin\rho_1}{\rho_1},
\]
\[
  \sin(p_2)=S_0+S_1u_2,
  \qquad
  S_1=\cos a_2\frac{\sinh\rho_2}{\rho_2},
\]
and
\[
  \cos(p_3)=C_0+C_1u_3,
  \qquad
  C_1=-\sin a_3\frac{\sinh\rho_3}{\rho_3}.
\]
The associator  vanishes whenever any one of its arguments is a real scalar. Therefore
\[
  \assoc{\exp(p_1)}{\sin(p_2)}{\cos(p_3)}
  =E_1S_1C_1\assoc{u_1}{u_2}{u_3}.
\]
Since scalar parts do not contribute to an associator, we get
\[
  \assoc{u_1}{u_2}{u_3}=\assoc{p_1}{p_2}{p_3}.
\]
Define the real coefficient
\[
  K=-e^{a_1}\cos a_2\sin a_3
  \frac{\sin\rho_1}{\rho_1}
  \frac{\sinh\rho_2}{\rho_2}
  \frac{\sinh\rho_3}{\rho_3}.
\]
Then the exact \emph{analytic associator} is
\[
  \assoc{\exp(p_1)}{\sin(p_2)}{\cos(p_3)}
  =K\assoc{p_1}{p_2}{p_3}.
\]
The quotient functions are as usual interpreted by continuity at zero. This identity provides a direct bridge between one-element analytic functions and the non-associative structure of the full algebra.

\subsection{Pentarnion functional calculus, a non-associative analytic example}

Choose
\[
  p_1=\frac{\pi}{2}i,
  \qquad
  p_2=j,
  \qquad
  p_3=1+k.
\]
The one-element formulas give
\[
  \exp(p_1)=i,
\]
\[
  \sin(p_2)=j\sinh (1),
\]
and
\[
  \cos(p_3)=\cos (1)\,\cosh (1)-k\sin (1)\,\sinh (1).
\]
The scalar part of $\cos(p_3)$ contributes nothing to the associator. Hence
\[
  \assoc{\exp(p_1)}{\sin(p_2)}{\cos(p_3)}
  =-\sin (1)\,\sinh^2 (1)\,\assoc{i}{j}{k}.
\]
Using
\[
  \assoc{i}{j}{k}=\frac{1}{2}i+j+\frac{1}{2}k,
\]
we obtain the exact nonzero result
\[
  \assoc{\exp(p_1)}{\sin(p_2)}{\cos(p_3)}
  =-\sin 1\,\sinh^2 1
   \left(\frac{1}{2}i+j+\frac{1}{2}k\right).
\]
Equivalently,
\[
  \bigl(\exp(p_1)\sin(p_2)\bigr)\cos(p_3)
  \ne
  \exp(p_1)\bigl(\sin(p_2)\cos(p_3)\bigr).
\]
For completeness, set
\[
  S=\sin (1)\,\sinh^2 (1),
  \qquad
  T=\cos (1)\,\cosh (1)\,\sinh (1).
\]
Then the two bracketings are
\[
  \bigl(\exp(p_1)\sin(p_2)\bigr)\cos(p_3)
  =\frac{S}{\sqrt{2}}-\frac{S}{2}i-\frac{S}{2}j
   +\frac{T}{\sqrt{2}}k+\frac{T}{\sqrt{2}}\ell,
\]
whereas
\[
  \exp(p_1)\bigl(\sin(p_2)\cos(p_3)\bigr)
  =\frac{S}{\sqrt{2}}+\frac{S}{2}j
   +\left(\frac{T}{\sqrt{2}}+\frac{S}{2}\right)k
   +\frac{T}{\sqrt{2}}\ell.
\]
Their difference is exactly
\[
  -S\left(\frac{1}{2}i+j+\frac{1}{2}k\right).
\]
This example illustrates that the behaviour of analytic functions of one Pentarnion is the same as when working with complex numbers and complex calculus, while analytic functions of independent Pentarnions display the non-associative behaviour of this highly nonassociative algebra.

\section{Complex-slice Riccati-style equations}\label{sec:complex-slice}

\subsection{Equivariance and invariant complex slice}
In the following, polynomial maps will be ``fully bracketed'' without always referring to this fact, as the nonassociativity of the product of the algebra $\Pent$ usually does not allow us to drop brackets.  Recall that $\mathbf a=\frac{i+j+k}{\sqrt3}$.

\begin{proposition} \label{prop:complex-slice}
Let \(H\) be a fully bracketed polynomial map whose coefficients belong to
\(\Pent^{\Aut(\Pent)}\).  Then
\[
 H(S_a (p))=S_a (H(p))
\]
for all $a\in \R$.
In particular, a Riccati-type equation with coefficients in
\(\mathbb C_{\mathbf a}\) is \(SO(2)\)-equivariant.  If its initial value
also belongs to \(\mathbb C_{\mathbf a}\), the entire solution remains in
that complex subalgebra, and all bracketings coincide.
\end{proposition}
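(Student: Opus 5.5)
The equivariance will follow by structural induction on the fully bracketed expression defining \(H\). A fully bracketed polynomial map is built from the variable \(p\) and constants \(c\in\Pent^{\Aut(\Pent)}\) by finitely many applications of addition, multiplication by real scalars, and the binary product \((X,Y)\mapsto XY\), with each product node carrying its own bracketing. I will prove the claim \(E(S_a(p))=S_a(E(p))\) for every such sub-expression \(E\).

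\emph{Base cases.} For \(E=p\) the claim is trivial. For a constant \(E=c\), we have \(S_a(c)=c\) because \(c\) is fixed by every automorphism, so both sides equal \(c\). By Theorem~\ref{thm:aut} every automorphism is some \(S_a\), and conversely each \(S_a\) is an algebra automorphism. Hence \(\Pent^{\Aut(\Pent)}\), described in Corollary~\ref{cor:fixed-subalgebra}, is exactly the common fixed set.

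\emph{Inductive steps.} Linearity of \(S_a\) handles sums and real multiples. For a product node \(E=XY\), multiplicativity of \(S_a\) and the induction hypothesis give
\[
E(S_a p)=X(S_a p)\,Y(S_a p)=S_a(X(p))\,S_a(Y(p))=S_a\bigl(X(p)Y(p)\bigr)=S_a(E(p)).
\]
No associativity is needed here, because \(S_a\) respects each individual binary product, whatever the bracketing. This completes the induction. The Riccati case is the special case \(H(p)=A_0+A_1p+(\text{bracketed quadratic words in }p\text{ and }A_2)\) with \(A_i\in\mathbb C_{\mathbf a}\subset\Pent^{\Aut(\Pent)}\). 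Equivariance of the vector field then implies that \(S_a\) maps solutions to solutions: if \(p(t)\) solves \(\dot p=H(p)\), then so does \(S_a(p(t))\), since \(S_a\) is linear and time-independent.

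\emph{Invariance of the slice.} \(\mathbb C_{\mathbf a}=\Span_{\R}\{1,\mathbf a\}\) is a subalgebra isomorphic to \(\C\) (Corollary~\ref{cor:fixed-subalgebra}), and it is associative and commutative. If all coefficients and \(p\) lie in \(\mathbb C_{\mathbf a}\), every node of the expression stays in \(\mathbb C_{\mathbf a}\), so \(H\) maps \(\mathbb C_{\mathbf a}\) into itself. Thus the vector field is tangent to the linear subspace \(\mathbb C_{\mathbf a}\). I will then argue that the restricted ODE on \(\mathbb C_{\mathbf a}\cong\C\) is a polynomial ODE, so it has a unique local solution staying in the slice. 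By uniqueness for the full polynomial (hence locally Lipschitz) system on \(\R^5\), this solution coincides with the solution from an initial value in \(\mathbb C_{\mathbf a}\), on its whole maximal interval of existence.

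An alternative argument uses the symmetry directly. The solution with initial value \(p_0\in\mathbb C_{\mathbf a}\) is fixed by all \(S_a\) by uniqueness, so it lies in \(\Pent^{\Aut(\Pent)}=\Span\{1,\mathbf a,\mathbf d\}\). However, this only yields the three-dimensional fixed space, not \(\mathbb C_{\mathbf a}\) itself, so the tangency argument is the one to use.

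\emph{Bracketings.} Finally, all bracketings coincide on the slice because every factor lies in the associative and commutative algebra \(\mathbb C_{\mathbf a}\). Hence any two bracketings or orderings of the same word give the same value there, and the vector fields, and with them the solutions, agree.

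\emph{Main obstacle.} The only delicate point is making ``fully bracketed polynomial map'' precise enough for the induction, namely as an expression tree. Once that is done, each step is immediate.
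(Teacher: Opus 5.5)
Your proposal is correct and follows essentially the same route as the paper. It uses structural induction on the expression tree, relying on the fact that each \(S_a\) is an automorphism fixing the coefficients; then tangency of the field to \(\mathbb C_{\mathbf a}\) together with uniqueness of solutions; and finally the associativity and commutativity of \(\mathbb C_{\mathbf a}\cong\C\) for the bracketing claim, with your extra remark that symmetry alone only confines the solution to \(\Span_{\R}\{1,\mathbf a,\mathbf d\}\) being accurate.
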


\begin{proof}
The equivariance assertion follows by structural induction on the fully
bracketed expression tree, using that every coefficient is fixed and that
\(S_a\) is an automorphism. 

Since
\(\mathbb C_{\mathbf a}\cong \mathbb{C}\), the vector field is
tangent to \(\mathbb C_{\mathbf a}\) whenever the state lies in it.  Uniqueness of solutions then
implies invariance.  
 Since the slice is isomorphic to $\C$, bracketing is irrelevant here.
\end{proof}

Here, $p$ is viewed a function defined on $\Pent\cong\R^5$ and defined through this equation.

The proposition gives the associative control case.  In the remaining examples the state, the coefficients, or both leave a common complex slice.  The associator is then active, and different parenthesizations need not generate the same vector field.

\subsection{An exact circular example with a complex-slice coefficient}\label{sec:circular-example}
The following example provides a bridge between the associative complex-slice
example and genuinely off-slice Pentarnion dynamics. We demonstrate how a coefficient in
\(\mathbb C_{\mathbf a}\) supports an exact periodic circle for one
bracketing, while the associator generated by the alternative bracketing
imposes a monotone drift that forbids periodic recurrence in \(\rho>0\).
Indeed, the complex-slice result in Proposition \ref{prop:complex-slice} becomes more
informative when the coefficient remains in
\(\mathbb C_{\mathbf a}\) but the state is allowed to leave that slice.
For $\eta\in \R$, $0\leq\eta\leq1$, consider the interpolation 
\begin{equation}
\label{eq:circular-interpolation}
 \dot p=(1-\eta)\mathbf a(pp)+\eta(\mathbf a p)p
       =\mathbf a(pp)+\eta\assoc{\mathbf a}{p}{p}.
\end{equation}

\begin{proposition}[Exact circular periodic family]
\label{prop:exact-circle}
At  \(\eta=0\), the equation
\begin{equation}
\label{eq:left-circle-equation}
 \dot p=\mathbf a(pp)
\end{equation}
has the exact periodic solutions
\begin{equation}
\label{eq:exact-circle}
 p_{r,\theta_0}(t)=r+r\cos(\sqrt2\,rt+\theta_0)\,\mathbf b
   +r\sin(\sqrt2\,rt+\theta_0)\,\mathbf c,
 \qquad r>0.
\end{equation}
Their period is
\begin{equation}
\label{eq:exact-circle-period}
 T_r=\frac{\sqrt2\,\pi}{r}.
\end{equation}
\end{proposition}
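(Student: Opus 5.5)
The plan is direct verification: substitute the ansatz \eqref{eq:exact-circle} into \eqref{eq:left-circle-equation} and check that both sides agree. The only algebraic inputs are the quadratic relation of Corollary~\ref{cor:quadratic} and the adapted products of Lemma~\ref{lem:adapted-products}. Write $\theta(t)=\sqrt2\,rt+\theta_0$ and
\[
u(t)=r\cos\theta\,\mathbf b+r\sin\theta\,\mathbf c\in E=\Span\{\mathbf b,\mathbf c\},
\]
so that $p=r+u$. The basis $\mathbf a,\mathbf b,\mathbf c,\mathbf d$ is orthonormal, so $\Tr(p)=2r$ and $N(p)=r^2+\norm{u}^2=2r^2$.

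First I compute $pp$. By Corollary~\ref{cor:quadratic}(i),
\[
pp=\Tr(p)p-N(p)=2r(r+u)-2r^2=2ru .
\]
So the square is pure and lies in $E$. This is the key simplification: it removes every bracketing issue inside $pp$, since powers of one element are unambiguous.

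Next I apply the left coefficient. By Lemma~\ref{lem:adapted-products}, $\mathbf a\mathbf b=\mathbf c/\sqrt2$ and $\mathbf a\mathbf c=-\mathbf b/\sqrt2$, so left multiplication by $\mathbf a$ acts on $E$ as rotation by $\pi/2$ scaled by $1/\sqrt2$. There is no scalar part, because $\mathbf a\perp E$ and hence $\langle \mathbf a,u\rangle=0$. This gives
\[
\mathbf a(pp)=2r\,\mathbf a u=\sqrt2\,r^2\bigl(-\sin\theta\,\mathbf b+\cos\theta\,\mathbf c\bigr).
\]
Differentiating the ansatz gives
\[
\dot p=r\dot\theta\,(-\sin\theta\,\mathbf b+\cos\theta\,\mathbf c),\qquad \dot\theta=\sqrt2\,r,
\]
which is the same vector. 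The scalar part of $\dot p$ is zero, matching the vanishing scalar part of $\mathbf a(pp)$, and there are no $\mathbf a$- or $\mathbf d$-components on either side. So the ansatz solves \eqref{eq:left-circle-equation}.

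The curve is $2\pi$-periodic in $\theta$, and $\theta$ advances at the constant rate $\sqrt2\,r$. Its minimal period is therefore $2\pi/(\sqrt2\,r)=\sqrt2\,\pi/r$, which is \eqref{eq:exact-circle-period}. By uniqueness of solutions of the polynomial ODE, this is the solution through the initial value $p(0)=r+r\cos\theta_0\,\mathbf b+r\sin\theta_0\,\mathbf c$.

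I expect no real obstacle. The one point needing care is getting the sign and normalisation of $\mathbf a\mathbf b$ and $\mathbf a\mathbf c$ right, because that fixes the direction of rotation and the factor $\sqrt2$ in the frequency. The scalar part $r$ is essential: it is what makes $pp=2ru$ nonzero and tangent to the circle.
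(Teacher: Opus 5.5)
Your proposal is correct and follows essentially the same route as the paper: you square $p$ on its complex slice to get $pp=2ru$, which is the paper's $p^2=r^2(1+q)^2=2r^2q$. You then apply $\mathbf a\mathbf b=\mathbf c/\sqrt2$ and $\mathbf a\mathbf c=-\mathbf b/\sqrt2$ from Lemma~\ref{lem:adapted-products} and compare with $\dot p$. The only cosmetic differences are that you invoke the trace--norm relation of Corollary~\ref{cor:quadratic} instead of $q^2=-1$, and that you additionally justify minimality of the period.
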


\begin{proof}
Set
\[
 q(t)=\cos\theta(t)\,\mathbf b+\sin\theta(t)\,\mathbf c,
 \qquad \theta(t)=\sqrt2\,rt+\theta_0.
\]
By Lemma  \ref{lem:adapted-products}, \(q^2=-1\), so for
\(p=r(1+q)\),
\[
 p^2=r^2(1+q)^2=2r^2q.
\]
Moreover, we observe that
\[
 \mathbf a q
 =\frac{-\sin\theta\,\mathbf b+\cos\theta\,\mathbf c}{\sqrt2}.\]
Consequently, we get
\[
 \mathbf a(pp)
 =\sqrt2\,r^2
  (-\sin\theta\,\mathbf b+\cos\theta\,\mathbf c)
 =\dot p.
\]
The angular velocity  is \(\sqrt2\,r\), which gives
\eqref{eq:exact-circle-period}.
\end{proof}

To understand what the nonassociativity of the Pentarnion algebra means, that is what happens if we change the bracketing in an equation, write
\begin{equation}
\label{eq:adapted-state-circle}
 p=x_0+x_1\mathbf a+x_2\mathbf b+x_3\mathbf c+x_4\mathbf d,
 \qquad \rho=x_2^2+x_3^2.
\end{equation}
\begin{theorem}[Exact associator drift and loss of periodicity]
\label{thm:circle-drift}
For Equation \eqref{eq:circular-interpolation}, direct expansion in the adapted
basis gives
\begin{equation}
\label{eq:circle-drift-components}
 \dot\rho=-\eta x_1\rho,
 \qquad
 \dot x_4=-\eta x_1x_4-\frac{\sqrt3}{2}\eta\rho.
\end{equation}
Hence, on every interval on which \(\rho>0\),
\begin{equation}\label{eq:circle-ratio-drift}
 \frac{d}{dt}\left(\frac{x_4}{\rho}\right)
 =-\frac{\sqrt3}{2}\eta.
\end{equation}
If \(\eta\neq0\), Equation \eqref{eq:circular-interpolation} has no
periodic orbit contained in the region \(\rho>0\).  In particular, the
circular family in Proposition \ref{prop:exact-circle} cannot persist under a nonzero
associator weight.
\end{theorem}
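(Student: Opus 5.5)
The plan is to split the vector field of \eqref{eq:circular-interpolation} into its associative part \(\mathbf a(pp)\) and its associator part \(\eta\assoc{\mathbf a}{p}{p}\), and to show that only the associator part contributes to \(\dot\rho\) and \(\dot x_4\). I would work throughout in the adapted basis \eqref{eq:adapted-basis} with the products of Lemma~\ref{lem:adapted-products}, writing \(p=x_0+u\) with \(u=x_1\mathbf a+x_2\mathbf b+x_3\mathbf c+x_4\mathbf d\).

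\textbf{The associative part.} By Corollary~\ref{cor:quadratic}, \(pp=2x_0p-N(p)\), so
\[
\mathbf a(pp)=2x_0\,\mathbf a p-N(p)\,\mathbf a .
\]
From Lemma~\ref{lem:adapted-products},
\[
\mathbf a u=-x_1+\tfrac{x_2}{\sqrt2}\mathbf c-\tfrac{x_3}{\sqrt2}\mathbf b-\sqrt2 x_4\mathbf a .
\]
Hence the \((\mathbf b,\mathbf c)\)-component of \(\mathbf a p\) is \((x_2,x_3)\) rotated by \(\pi/2\) and scaled by \(1/\sqrt2\), and its \(\mathbf d\)-component is zero. Consequently the \(\eta\)-independent part contributes nothing to \(\dot x_4\), and nothing to \(\dot\rho=2x_2\dot x_2+2x_3\dot x_3\), because a rotation generator is orthogonal to the radius vector.

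\textbf{The associator part.} Since associators vanish when an argument is real, \(\assoc{\mathbf a}{p}{p}=\assoc{\mathbf a}{u}{u}=(\mathbf a u)u+\norm{u}^2\mathbf a\). The second term only affects the \(\mathbf a\)-direction, so I only need the \(\mathbf b\), \(\mathbf c\), \(\mathbf d\)-components of \((\mathbf a u)u\). Expanding,
\[
(\mathbf a u)u=-x_1u+\Bigl(\tfrac{x_2}{\sqrt2}\mathbf c-\tfrac{x_3}{\sqrt2}\mathbf b-\sqrt2x_4\mathbf a\Bigr)u .
\]
I would evaluate this bilinearly using the pure products \(vw=-\langle v,w\rangle+v\times w\) from Lemma~\ref{lem:adapted-products} and their antisymmetric reverses. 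The expected outcome is:
\begin{itemize}
\item the \((\mathbf b,\mathbf c)\)-part is \(-\tfrac{x_1}{2}(x_2,x_3)\) plus tangential (rotational) terms, giving \(\dot\rho=-\eta x_1\rho\);
\item the \(\mathbf d\)-part is \(-x_1x_4-\tfrac{\sqrt3}{2}\rho\), where the term \(-\tfrac{\sqrt3}{2}(x_2^2+x_3^2)\) comes from \(\mathbf c\mathbf b\) and \(\mathbf b\mathbf c\) via their \(\sqrt{3/2}\,\mathbf d\) components.
\end{itemize}
This bookkeeping, and especially confirming that the \(\mathbf d\mathbf b\), \(\mathbf d\mathbf c\) and \(\mathbf a\mathbf d\) cross terms produce exactly the radial factor \(x_1/2\) and no leftover radial pieces, is the main obstacle. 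I would check each of the sixteen pairings against Lemma~\ref{lem:adapted-products}.

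\textbf{Ratio identity and conclusion.} Given \eqref{eq:circle-drift-components}, the quotient rule on an interval where \(\rho>0\) gives
\[
\frac{d}{dt}\Bigl(\frac{x_4}{\rho}\Bigr)=\frac{\dot x_4\rho-x_4\dot\rho}{\rho^2}
=\frac{(-\eta x_1x_4-\tfrac{\sqrt3}{2}\eta\rho)\rho+\eta x_1x_4\rho}{\rho^2}
=-\tfrac{\sqrt3}{2}\eta .
\]
Suppose a periodic orbit of period \(T>0\) lies in \(\rho>0\). Then \(x_4/\rho\) is smooth and \(T\)-periodic along it, so its increment over one period must vanish. But that increment equals \(-\tfrac{\sqrt3}{2}\eta T\), which is nonzero when \(\eta\neq0\), a contradiction. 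The circles \eqref{eq:exact-circle} have \(\rho=r^2>0\), so they cannot persist for \(\eta\neq0\).
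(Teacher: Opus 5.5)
Your proposal is correct and follows the paper's proof: expand $\mathbf a(pp)+\eta\assoc{\mathbf a}{p}{p}$ in the adapted basis, then apply the quotient rule, and conclude that $x_4/\rho$ is a nonconstant affine function of time, so it cannot be periodic. Your split into the associative part, which is purely rotational in the $(\mathbf b,\mathbf c)$-plane and has no $\mathbf d$-component, and the associator part is a clean way to organise that expansion. The components you predict, a $(\mathbf b,\mathbf c)$-part $-\tfrac{x_1}{2}(x_2,x_3)+\tfrac{x_4}{2}(x_3,-x_2)$ and a $\mathbf d$-part $-x_1x_4-\tfrac{\sqrt3}{2}\rho$, are exactly what the computation gives; the radial factor $-\tfrac{x_1}{2}$ comes from $-x_1u$ combined with the $\mathbf b\mathbf a$ and $\mathbf c\mathbf a$ products, while the $\mathbf a\mathbf b$, $\mathbf a\mathbf c$, $\mathbf b\mathbf d$, $\mathbf c\mathbf d$ cross terms give only the tangential part.
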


\begin{proof}
Using Lemma  \ref{lem:adapted-products} to expand
\(\mathbf a (pp)+\eta\assoc{\mathbf a}{p}{p}\) gives
\eqref{eq:circle-drift-components}.  The quotient rule then yields
\[
 \frac{d}{dt}\left(\frac{x_4}{\rho}\right)
 =\frac{(-\eta x_1x_4-\frac{\sqrt3}{2}\eta\rho)\rho
        -x_4(-\eta x_1\rho)}{\rho^2}
 =-\frac{\sqrt3}{2}\eta.
\]
For \(\eta\neq0\), the ratio \(x_4/\rho\) is a nonconstant affine
function of time and therefore cannot be periodic.
 
\end{proof}

\section{Compact Pentarnion encoding and rank-four Riccati algebras}
\label{sec:rank-four-riccati}

A Pentarnion differential equation can also be viewed canonically as a structured representation of a polynomial vector field on \(\R^5\). Once a basis is fixed, the equation can be written uniquely as five polynomial equations. The representation is generally much shorter than its coordinate expansion, but it is also restrictive: the coordinate coefficients are tied together by the multiplication table and by the placement of parentheses. Thus the Pentarnion formalism is best viewed as a compact generative language for distinguished families of five-dimensional polynomial fields, rather than as a relabelling of arbitrary coordinate systems. In particular, not every polynomial vector field on \(\R^5\) admits a Pentarnion representation.

Markus initiated the correspondence between homogeneous quadratic differential equations and commutative nonassociative algebras \cite{Markus1960}. R\"ohrl developed related results in \cite{Roehrl1977}. Further developments appear in \cite{KinyonSagle1995}, and the monograph \cite{Walcher1991} treats the subject systematically. Walcher calls a commutative algebra a \emph{rank-three algebra} when each one-generated subalgebra has dimension at most two \cite{Walcher1999}. Algebraic formulations of Riccati systems also occur in Jordan-pair settings \cite{Walcher1991}. Riccati equations over normed division algebras are treated in \cite{deLucasTobolskiVilarino2016}.

Here we apply the quadratic-ODE correspondence only after the nonassociative Pentarnion word has been fixed. We classify all quadratic words containing one coefficient \(B\) and two copies of the state \(p\) whose values are universally confined to \(\operatorname{span}_{\R}\{1,B,p\}\). The term \(p(Bp)-(pB)p=-[p,B,p]\), which vanishes in every flexible algebra, is the flexibility defect. We prove that adding this term generates a third direction: the associated polarised ODE algebra changes generically from rank three, where one-generated subalgebras have dimension at most two, to rank four, where a generic one-generated subalgebra has dimension three. On a canonical active leaf the resulting three-dimensional system admits two first integrals. A nonlinear periodic member is then integrated by quadrature, and its resultant curve is shown to be elliptic.

In this section, a \emph{homogeneous Riccati word} means a homogeneous quadratic expression built from one fixed coefficient and two copies of the dependent variable.  The bracket-complete class below is broader than the two-word interpolation \(B p^2\leftrightarrow(Bp)p\) studied later: it is obtained by allowing every ordering and bracketing of the word \(Bpp\), and then classifying the combinations that force a universal three-dimensional reduction.  Thus no claim is made that a generic member of the later two-word family has rank four.

\subsection{The bracket-complete quadratic word space}
\label{subsec:bracket-complete-space}

Write a Pentarnion in scalar--pure form as \(p=(s,u)\in\R\oplus\R^4\).  The canonical product has the form
\begin{equation}
\label{eq:scalar-vector-product}
 (s,u)(t,v)
 =\bigl(st-\langle u,v\rangle,\;sv+tu+u\times v\bigr),
\end{equation}
Let \(B=(b,w)\) and define the six quadratic words
\begin{equation}
\label{eq:six-bracket-words}
\begin{aligned}
 M_0&=B(pp),& M_1&=(pp)B,\\
 M_2&=(Bp)p,& M_3&=p(Bp),\\
 M_4&=(pB)p,& M_5&=p(pB).
\end{aligned}
\end{equation}
For \(\lambda=(\lambda_0,\ldots,\lambda_5)\in\R^6\), set
\begin{equation}
\label{eq:general-bracket-word-field}
 Q_{\lambda,B}(p)=\sum_{r=0}^{5}\lambda_rM_r(B,p,p).
\end{equation}

\begin{lemma}[Exact six-word expansion]
\label{lem:six-word-expansion}
Put
\[
 U=\langle u,u\rangle,
 \qquad m=\langle w,u\rangle,
 \qquad K=w\times u,
 \qquad T=\langle K,u\rangle,
 \qquad H=K\times u,
\]
and
\[
 A_0=b(s^2-U)-2sm,
 \qquad
 V_0=2bsu+(s^2-U)w,
 \qquad
 V_1=(2bs-m)u+s^2w.
\]
Then, relative to the scalar--pure decomposition \(\Pent=\R1\oplus V\), where \(V=\operatorname{span}_{\R}\{i,j,k,\ell\}\),
\begin{equation}
\label{eq:six-word-expansion}
\begin{aligned}
 M_0&=(A_0,V_0+2sK),&
 M_1&=(A_0,V_0-2sK),\\
 M_2&=(A_0-T,V_1+2sK+H),&
 M_3&=(A_0-T,V_1-H),\\
 M_4&=(A_0+T,V_1-H),&
 M_5&=(A_0+T,V_1-2sK+H).
\end{aligned}
\end{equation}
In particular,
\begin{equation}
\label{eq:six-word-zero-relation}
 M_0-M_1-M_2+M_3-M_4+M_5=0.
\end{equation}
\end{lemma}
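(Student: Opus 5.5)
The plan is a direct computation from the scalar--pure product formula \eqref{eq:scalar-vector-product}. Only three structural facts are used: the product $\times$ on $V$ is bilinear and anti-commutative (so $u\times u=0$ and $v\times u=-u\times v$), $\langle\,,\,\rangle$ is symmetric, and real scalars pass freely through $\times$. No orthogonality such as $\langle w\times u,u\rangle=0$ holds in $\Pent$, which is why $T=\langle K,u\rangle$ appears. I will keep it as an independent quantity and never try to simplify it.

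\textbf{Step 1: the square.} First I compute $pp=(s^2-U,\,2su)$; the cross term vanishes since $u\times u=0$.

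\textbf{Step 2: the words $M_0,M_1$.} Multiply $B=(b,w)$ by $pp$ on the left and on the right:
\[
B(pp)=\bigl(b(s^2-U)-2s\langle w,u\rangle,\;2bsu+(s^2-U)w+2s\,w\times u\bigr)=(A_0,V_0+2sK).
\]
Reversing the order only changes the sign of the cross term, since $u\times w=-K$. This gives $M_1=(A_0,V_0-2sK)$.

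\textbf{Step 3: the one-step products.} Next I record
\[
Bp=(bs-m,\;bu+sw+K),\qquad pB=(bs-m,\;bu+sw-K).
\]
Each is then multiplied by $p=(s,u)$ on the appropriate side. For $M_2=(Bp)p$:
\begin{itemize}
\item The scalar part is $(bs-m)s-\langle bu+sw+K,u\rangle=bs^2-2sm-bU-T=A_0-T$.
\item The vector part is $(bs-m)u+s(bu+sw+K)+(bu+sw+K)\times u$.
\item Using $u\times u=0$, $(sw)\times u=sK$ and $K\times u=H$, the vector part equals $(2bs-m)u+s^2w+2sK+H=V_1+2sK+H$.
\end{itemize}
The words $M_3=p(Bp)$, $M_4=(pB)p$ and $M_5=p(pB)$ are computed identically. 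The only sign bookkeeping is:
\begin{itemize}
\item $u\times(sw)=-sK$;
\item $u\times K=-H$;
\item $\langle u,\pm K\rangle=\pm T$.
\end{itemize}
This yields the remaining entries of \eqref{eq:six-word-expansion}.

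\textbf{Step 4: the relation.} For \eqref{eq:six-word-zero-relation} I substitute the six expansions into $M_0-M_1-M_2+M_3-M_4+M_5$ and check separately the scalar component and the coefficients of $V_0$, $V_1$, $sK$ and $H$:
\begin{itemize}
\item Scalars: $A_0-A_0-(A_0-T)+(A_0-T)-(A_0+T)+(A_0+T)=0$.
\item $V_0$ and $V_1$: both cancel in pairs.
\item $sK$: $2-(-2)-2+0-0+(-2)=0$.
\item $H$: $0-0-1+(-1)-(-1)+1=0$.
\end{itemize}

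The only real obstacle is sign discipline in Step 3. The main risk is confusing $K\times u$ with $u\times K$, or $w\times u$ with $u\times w$. I will fix the conventions $K=w\times u$ and $H=K\times u$ once and reduce every cross product to one of these two before collecting terms.
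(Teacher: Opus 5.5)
Your proposal is correct and follows essentially the same route as the paper: compute $pp=(s^2-U,2su)$, $Bp$ and $pB$, substitute into the scalar--pure product formula \eqref{eq:scalar-vector-product} using only that $\times$ is alternating, and then check the zero relation componentwise. Your sign bookkeeping is accurate, including treating $T=\langle K,u\rangle$ as a genuinely nonzero quantity and the coefficient checks for $sK$ and $H$.
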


\begin{proof}
The square is \(p^2=(s^2-U,2su)\), while
\[
 Bp=(bs-m,bu+sw+K),
 \qquad
 pB=(bs-m,bu+sw-K).
\]
Substitution into Equation~\eqref{eq:scalar-vector-product}, using that \(\times\) is alternating, gives the six displayed identities.  Their stated linear combination vanishes componentwise.
\end{proof}

\begin{theorem}[Universal three-plane classification]
\label{thm:universal-three-plane-classification}
The inclusion
\begin{equation}
\label{eq:universal-three-plane-inclusion}
 Q_{\lambda,B}(p)\in\operatorname{span}_{\R}\{1,B,p\}
 \qquad\text{for every }B,p\in\Pent
\end{equation}
holds if and only if
\begin{equation}
\label{eq:closure-condition-one}
 \lambda_0-\lambda_1+\lambda_2-\lambda_5=0
\end{equation}
and
\begin{equation}
\label{eq:closure-condition-two}
 \lambda_2-\lambda_3-\lambda_4+\lambda_5=0.
\end{equation}
Modulo Equation~\eqref{eq:six-word-zero-relation}, every such field has a unique representation
\begin{equation}
\label{eq:FGH-family}
 Q_{\alpha,\beta,\gamma;B}
 =\alpha\mathcal F_B+\beta\mathcal G_B+\gamma\mathcal H_B,
\end{equation}
where
\begin{equation}
\label{eq:FGH-definitions}
\begin{aligned}
 \mathcal F_B(p)&=B(pp)+(pp)B,\\
 \mathcal G_B(p)&=(Bp)p+(pB)p-B(pp),\\
 \mathcal H_B(p)&=p(Bp)-(pB)p=-[p,B,p].
\end{aligned}
\end{equation}
Consequently, for every initial state \(p_0\), the three-dimensional plane
\begin{equation}
\label{eq:invariant-generated-plane}
 S_{B,p_0}=\operatorname{span}_{\R}\{1,B,p_0\}
\end{equation}
is invariant under \(\dot p=Q_{\alpha,\beta,\gamma;B}(p)\).
\end{theorem}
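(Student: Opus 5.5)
The plan is to work entirely from the six-word expansion in Lemma~\ref{lem:six-word-expansion}. I will write $Q_{\lambda,B}(p)$ as a combination of the "safe" terms $A_0,T$ (scalar), $u$, $w$ (which lie in $\operatorname{span}_{\R}\{1,B,p\}$, since $B=(b,w)$ and $p=(s,u)$), plus the two "dangerous" pure vectors $K=w\times u$ and $H=(w\times u)\times u$.

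Summing with weights $\lambda_r$, the coefficient of $2sK$ is $\lambda_0-\lambda_1+\lambda_2-\lambda_5$. The coefficient of $H$ is $\lambda_2-\lambda_3-\lambda_4+\lambda_5$. Everything else (the scalar $\sum\lambda_r A_0\pm T$ terms, and the $V_0,V_1$ parts, which are combinations of $u$ and $w$ with scalar coefficients) lies in $\operatorname{span}_{\R}\{1,u,w\}$. For $s,b$ arbitrary this span equals $\operatorname{span}_{\R}\{1,B,p\}$. So sufficiency is immediate: if both conditions hold, $Q_{\lambda,B}(p)\in\operatorname{span}\{1,B,p\}$.

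For necessity I will exhibit explicit witnesses. Take $b=0$, $w=\mathbf b$, $u=\mathbf c$, so that $K=\mathbf b\times\mathbf c=\mathbf a/\sqrt2+\sqrt{3/2}\,\mathbf d$. This is orthogonal to $\operatorname{span}\{\mathbf b,\mathbf c\}$, so $K\notin\operatorname{span}\{1,B,p\}$ whenever $s\neq0$. Next I will compute $H=K\times\mathbf c$ using Lemma~\ref{lem:adapted-products}:
\[
H=\tfrac1{\sqrt2}\,\mathbf a\mathbf c+\sqrt{\tfrac32}\,\mathbf d\mathbf c=-\tfrac12\mathbf b-\tfrac{\sqrt3}{2}\,\mathbf c .
\]
This lies in the span, so this witness only detects the $K$-condition. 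Taking $s=1$ forces $\lambda_0-\lambda_1+\lambda_2-\lambda_5=0$.

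For the second condition I will take $s=0$, which kills the $K$ term, and choose $w,u$ with $H\notin\operatorname{span}\{w,u\}$. Try $w=\mathbf a$, $u=\mathbf b$. Then $K=\mathbf a\mathbf b=\mathbf c/\sqrt2$ and $H=\tfrac1{\sqrt2}\,\mathbf c\mathbf b=-\tfrac12\mathbf a-\tfrac{\sqrt3}{2}\mathbf d$, which has a $\mathbf d$-component and hence lies outside $\operatorname{span}\{1,\mathbf a,\mathbf b\}$. This yields $\lambda_2-\lambda_3-\lambda_4+\lambda_5=0$. Finding these two witnesses, and checking they genuinely escape the span, is the only real obstacle; I expect the adapted-basis products to make it a short check. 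Note also that the scalar-part contributions $\pm T$ never obstruct, since $1$ is in the span.

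Representation: the solution space of the two linear conditions in $\R^6$ is $4$-dimensional. It contains the relation vector $(1,-1,-1,1,-1,1)$ of \eqref{eq:six-word-zero-relation}: check $1+1-1-1=0$ and $-1-1+1+1=0$. So the quotient is $3$-dimensional. The vectors of $\mathcal F_B,\mathcal G_B,\mathcal H_B$ are $(1,1,0,0,0,0)$, $(-1,0,1,0,1,0)$ and $(0,0,0,1,-1,0)$. Each satisfies both conditions (a direct check), and together with the relation vector they are linearly independent. This I verify by a $4\times6$ rank computation, e.g.\ looking at the columns $\lambda_1,\lambda_2,\lambda_3,\lambda_5$. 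That gives existence and uniqueness of $(\alpha,\beta,\gamma)$.

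Invariance: for $p\in S_{B,p_0}$ we have $\operatorname{span}\{1,B,p\}\subseteq S_{B,p_0}$, so the field is tangent to this linear subspace. The restricted polynomial ODE on $S_{B,p_0}$ has a solution staying in $S_{B,p_0}$, and uniqueness of solutions of the full system identifies it with the solution from $p_0$.
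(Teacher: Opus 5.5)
Your proposal is correct and follows the paper's proof. You isolate the $K$- and $H$-coefficients from Lemma~\ref{lem:six-word-expansion} for sufficiency, check that the coefficient vectors of $\mathcal F_B,\mathcal G_B,\mathcal H_B$ and the relation span the four-dimensional solution space, and get invariance from uniqueness of solutions. The only difference is in necessity: the paper uses a single witness ($w=i+j+k+\ell$, $u=i+j+\ell$, $s=1$) for which $w,u,K,H$ are linearly independent, whereas your two adapted-basis witnesses ($s=1$ with $H\in\operatorname{span}\{w,u\}$, then $s=0$) detect the two conditions separately, and your products check out.
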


\begin{proof}
By Lemma~\ref{lem:six-word-expansion}, the pure part of \(Q_{\lambda,B}\) is a linear combination of \(u,w,K,H\).  Its coefficients in the two complementary directions are
\[
 2s(\lambda_0-\lambda_1+\lambda_2-\lambda_5)
 \quad\text{and}\quad
 \lambda_2-\lambda_3-\lambda_4+\lambda_5.
\]
Thus Equations~\eqref{eq:closure-condition-one} and~\eqref{eq:closure-condition-two} are sufficient.  For necessity, take
\[
 w=i+j+k+\ell,
 \qquad
 u=i+j+\ell,
 \qquad s=1.
\]
Then
\[
 w\times u=-\sqrt2\,i,
 \qquad
 (w\times u)\times u=j-\ell,
\]
and the four pure vectors \(w,u,w\times u,(w\times u)\times u\) have determinant \(-2\sqrt2\).  Hence both complementary coefficients must vanish.

The coefficient vectors of \(\mathcal F_B,\mathcal G_B,\mathcal H_B\), together with the coefficient vector of Equation~\eqref{eq:six-word-zero-relation}, are linearly independent and span the four-dimensional solution space of Equations~\eqref{eq:closure-condition-one} and~\eqref{eq:closure-condition-two}.  This proves the representation statement.  Finally, Equation~\eqref{eq:universal-three-plane-inclusion} implies that the vector field maps every point of \(S_{B,p_0}\) back into that subspace, so uniqueness of solutions proves invariance.
\end{proof}

\begin{remark}[Compact encoding of five-dimensional polynomial fields]
\label{rem:compact-five-dimensional-encoding}
A general homogeneous quadratic vector field on \(\R^5\) has \(5\binom{6}{2}=75\) coordinate coefficients.  The universally three-dimensional family in Equation~\eqref{eq:FGH-family} is encoded by the five coordinates of \(B\) and three word coefficients.  It is not the full 75-parameter space; rather, it is a structured eight-parameter family whose coordinate couplings are generated automatically by the Pentarnion multiplication.  The same compression principle is used later with real-linear coefficient maps to encode dense five-dimensional fields in a small number of ordered Pentarnion products.
\end{remark}

\subsection{One-generator rank and the nonflexibility rank jump}
\label{subsec:rank-jump}

For a homogeneous quadratic map \(Q:\Pent\to\Pent\), let  \(\star:\Pent\times \Pent \to\Pent\),
\begin{equation}
\label{eq:polarised-product}
 x\star y=\frac12\bigl(Q(x+y)-Q(x)-Q(y)\bigr)
\end{equation}
be its linearisation, which is a symmetric bilinear map.
Then \((\Pent,\star)\) is a commutative, generally nonassociative algebra and the ODE is \(\dot p=p\star p\).  Following the rank-three convention of Walcher \cite{Walcher1999}, we say that this ODE algebra has \emph{one-generator rank at most \(r\)} if every subalgebra generated by one element has dimension at most \(r-1\).  It has generic rank \(r\) if equality holds on a nonempty Zariski-open set.

Define
\begin{equation}
\label{eq:cNT-definitions}
 c_B(p)=\operatorname{Re}(Bp)=bs-\langle w,u\rangle,
 \qquad
 N(p)=s^2+\langle u,u\rangle,
 \qquad
 T_B(u)=\langle w\times u,u\rangle.
\end{equation}

\begin{proposition}[Compact rank-three components]
\label{prop:compact-GH-identities}
For every \(B=(b,w)\) and \(p=(s,u)\),
\begin{equation}
\label{eq:compact-G-identity}
 \mathcal G_B(p)=2c_B(p)p-N(p)\overline B
\end{equation}
and
\begin{equation}
\label{eq:compact-H-identity}
 \mathcal H_B(p)=-2T_B(u)1.
\end{equation}
Hence the ODE algebras induced separately by \(\mathcal G_B\) and \(\mathcal H_B\) have one-generator rank at most three.
\end{proposition}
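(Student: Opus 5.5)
The plan is to read off both identities from Lemma~\ref{lem:six-word-expansion} by linear combination, and then obtain the rank statements by polarising the resulting closed forms.

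First I compute \(\mathcal G_B=M_2+M_4-M_0\) componentwise. For the scalar part, \((A_0-T)+(A_0+T)-A_0=A_0\). For the pure part, the \(\pm 2sK\) and \(\pm H\) terms cancel, leaving
\[
2V_1-V_0=(2bs-2m)u+(s^2+U)w .
\]
On the other side, \(c_B(p)=bs-m\) and \(\overline B=(b,-w)\), so
\[
2c_B(p)p-N(p)\overline B=\bigl(2(bs-m)s-(s^2+U)b,\;2(bs-m)u+(s^2+U)w\bigr).
\]
The scalar entry simplifies to \(b(s^2-U)-2sm=A_0\), so both components agree and \eqref{eq:compact-G-identity} follows. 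Next, \(\mathcal H_B=M_3-M_4\). The pure parts \(V_1-H\) coincide and cancel, and the scalar parts give \((A_0-T)-(A_0+T)=-2T\). Since \(T=\langle w\times u,u\rangle=T_B(u)\), this is \eqref{eq:compact-H-identity}. These computations are routine once the lemma is available.

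For the rank claims I polarise via \eqref{eq:polarised-product}. For \(\mathcal G_B\), the map \(p\mapsto c_B(p)\) is linear and \(N\) is quadratic with polar form \(N(x,y)\), so
\[
x\star y=c_B(x)y+c_B(y)x-\tfrac12N(x,y)\overline B .
\]
Hence \(x\star x\in\operatorname{span}\{x,\overline B\}\). Moreover, \(x\star\overline B\) and \(\overline B\star\overline B\) also lie in \(\operatorname{span}\{x,\overline B\}\), so this span is closed under \(\star\). Therefore the subalgebra generated by \(x\) has dimension at most two. For \(\mathcal H_B\), the polar form of the quadratic form \(T_B\) gives \(x\star y=-\tau(u_x,u_y)1\), which depends only on the pure parts. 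Since \(1\) has zero pure part, \(1\star y=0\), and so \(\operatorname{span}\{x,1\}\) is closed under \(\star\) and contains everything \(x\) generates. In both cases the one-generated subalgebras have dimension at most two, which is one-generator rank at most three.

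The main point needing care is the convention for what the subalgebra generated by \(x\) consists of. I would check that \(\overline B\), respectively \(1\), only enters through the products \(x\star x\) and its iterates, so that a two-dimensional \(\star\)-closed span containing \(x\) really bounds the dimension. Beyond that there is no obstacle beyond the bookkeeping of signs in \(2V_1-V_0\).
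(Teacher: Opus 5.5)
Your proposal is correct and matches the paper's proof: the two identities come from the row combinations \(M_2+M_4-M_0\) and \(M_3-M_4\) of Lemma~\ref{lem:six-word-expansion}, and the rank bound comes from the \(\star\)-closure of \(\operatorname{span}\{x,\overline B\}\) and \(\operatorname{span}\{x,1\}\) after polarisation. The caveat in your last paragraph is unnecessary. A \(\star\)-closed subspace containing \(x\) automatically contains the subalgebra generated by \(x\), so once you have shown closure, the bound of dimension at most two is already proved.
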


\begin{proof}
The identities follow by adding the corresponding rows of Equation~\eqref{eq:six-word-expansion}.  Polarisation of Equation~\eqref{eq:compact-G-identity} maps \(\operatorname{span}\{p,\overline B\}\) into itself, while polarisation of Equation~\eqref{eq:compact-H-identity} maps \(\operatorname{span}\{p,1\}\) into itself.
\end{proof}

\begin{theorem}[Associator-induced rank four]
\label{thm:associator-induced-rank-four}
Let \(B\notin\R\), and let \(\star\) be the polarised product associated with
\begin{equation}
\label{eq:rank-jump-family}
 Q_B(p)=\beta\mathcal G_B(p)+\gamma\mathcal H_B(p).
\end{equation}
Then:
\begin{enumerate}
 \item if \(\beta\gamma=0\), the ODE algebra has one-generator rank at most three;
 \item if \(\beta\gamma\ne0\), the ODE algebra has generic one-generator rank four.
\end{enumerate}
\end{theorem}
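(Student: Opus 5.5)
The plan is to compute the one-generated subalgebras of \((\Pent,\star)\) explicitly, using the compact identities of Proposition~\ref{prop:compact-GH-identities}. Part (1) is already contained in that proposition. If \(\gamma=0\) then \(Q_B=\beta\mathcal G_B\), and if \(\beta=0\) then \(Q_B=\gamma\mathcal H_B\); in both cases the generated subalgebra lies in a plane, \(\operatorname{span}\{x,\overline B\}\) or \(\operatorname{span}\{x,1\}\). So the work is in part (2). There I will show two things: every one-generated subalgebra has dimension \emph{at most three}, namely it lies in \(\operatorname{span}\{x,1,\overline B\}\); and dimension three is attained on a nonempty Zariski-open set. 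That is precisely generic one-generator rank four.

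\emph{Step 1 (polarisation).} From \eqref{eq:compact-G-identity} and \eqref{eq:compact-H-identity}, using \(c_B\) linear, \(N\) quadratic with polar form \(N(x,y)\), and the symmetrised form \(T_B(u,v)=\tfrac12(\langle w\times u,v\rangle+\langle w\times v,u\rangle)\), I get
\[
 x\star y=\beta\bigl(c_B(x)y+c_B(y)x-\tfrac12N(x,y)\overline B\bigr)-2\gamma\,T_B(u_x,u_y)\,1 .
\]

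\emph{Step 2 (upper bound).} I will check that \(S_x=\operatorname{span}\{x,1,\overline B\}\) is closed under \(\star\). The product \(x\star x=Q_B(x)\) lies in \(S_x\) by Step~1. For the other products, \(T_B\) vanishes when one argument has zero pure part, so \(x\star1\in\operatorname{span}\{x,1,\overline B\}\). Also \(T_B(u,-w)=-\tfrac12(\langle w\times u,w\rangle+\langle w\times w,u\rangle)=0\), because \(w\times u\perp w\) (skewness of \(L_w\)'s structure constants in \(\Pi\), checkable from \eqref{eq:Lu-matrix}) and \(w\times w=0\); hence \(x\star\overline B\in S_x\) as well. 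The products \(1\star1\), \(1\star\overline B\) and \(\overline B\star\overline B\) lie in \(\operatorname{span}\{1,\overline B\}\) by the same formula. Therefore the subalgebra generated by \(x\) has dimension at most three. (If the orthogonality \(w\times u\perp w\) fails, I will instead verify \(\overline B\star\overline B\) and \(x\star\overline B\) directly from Lemma~\ref{lem:six-word-expansion}.)

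\emph{Step 3 (generic equality).} Since \(B\notin\R\), the vectors \(1\) and \(\overline B\) are independent, so \(\dim S_x=3\) off the proper subvariety where \(x\in\operatorname{span}\{1,\overline B\}\). It then suffices to show that the determinant \(D(x)\) of the coordinates of \(x\), \(x\star x\), \(x\star(x\star x)\) with respect to a basis of \(S_x\) is a polynomial in \(x\) that is not identically zero. The set \(\{D\neq0\}\) is then the required nonempty Zariski-open set.

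In the basis \((x,1,\overline B)\) we have
\[
 x\star x=2\beta c_B(x)\,x-2\gamma T_B(u)\,1-\beta N(x)\,\overline B .
\]
Expanding \(x\star(x\star x)\) bilinearly with Step~2 gives a \(3\times3\) determinant. Its leading contribution should be proportional to \(\beta\gamma\,T_B(u)\) times a nondegenerate factor coming from \(x\star1\) and \(x\star\overline B\). First I will confirm that \(T_B\) is not identically zero for every \(w\neq0\), i.e.\ that \(L_w\) is never skew-adjoint: the \((1,4)\) and \((4,1)\) entries of \eqref{eq:Lu-matrix}, together with the diagonal entries, should force \(w=0\). Then I will evaluate \(D\) at one explicit point, for instance \(s=1\) with \(u\) chosen so that \(T_B(u)\neq0\) and \(c_B(x)=0\), to see that \(D\neq0\) there.

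The main obstacle is this last step. The claim must hold for \emph{every} nonreal \(B\), not merely for a generic \(B\). If a single test point is awkward, I will use the \(\SO(2)\)-equivariance of Theorem~\ref{thm:aut} to normalise \(w\) partially, and then argue that \(D\) restricted to a suitable line \(x=1+\tau u\) is a nonzero polynomial in \(\tau\).
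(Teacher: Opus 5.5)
Your outline is the paper's: polarise the compact identities, show that $\operatorname{span}\{x,1,\overline B\}$ is $\star$-closed, and prove that the determinant of $x,x^{\star2},x^{\star3}$ relative to $(x,\overline B,1)$ is a nonzero polynomial. Your direct closure check is a fine substitute for the paper's appeal to Theorem~\ref{thm:universal-three-plane-classification}.

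However, the orthogonality $w\times u\perp w$ that you invoke in Step~2 is false in $\Pent$. By Lemma~\ref{lem:adapted-products}, $\mathbf a\times\mathbf d=-\sqrt2\,\mathbf a$, so $\langle w\times u,w\rangle=-\sqrt2$ for $w=\mathbf a$, $u=\mathbf d$. It cannot hold in any case: with $u$ and $w$ interchanged it would give $T_B(u)=\langle w\times u,u\rangle=-\langle u\times w,u\rangle=0$, i.e.\ $\mathcal H_B\equiv0$. Closure survives only because the $\gamma$-part of $x\star y$ always lies in $\R1$. The correct product is $x\star\overline B=\beta N(B)\,x+\gamma R_B(u)\,1$, with $R_B(u)=\langle w\times u,w\rangle$.

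The genuine gap is Step~3. It carries all of part~(2), and you have not executed it; the form you anticipate for the determinant is wrong because of the false orthogonality. With the correct product, $x^{\star3}=Ax+D\overline B+E1$, where $D=-2\beta^2cN+2\beta\gamma sT$ and $E=-6\beta\gamma cT-\beta\gamma NR_B$ ($A$ does not enter the determinant). The determinant is
\[
\Delta=\beta\gamma\bigl[\beta(N^2R_B+2cNT)+4\gamma sT^2\bigr],
\]
which is not proportional to $T_B$.

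Your proposed test point does not exist in general. For $B=\mathbf a$ one has $c_B(x)=-\langle\mathbf a,u\rangle$ and $T_B(u)=-\sqrt2\,\langle\mathbf a,u\rangle\langle\mathbf d,u\rangle$, so $c_B(x)=0$ forces $T_B(u)=0$. Where such points do exist, $\Delta$ equals $\beta\gamma(\beta N^2R_B+4\gamma T^2)$ there. When $R_B(u)\neq0$ this vanishes for one ratio $\beta/\gamma$, so a point chosen independently of $(\beta,\gamma)$ does not suffice.

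The missing argument is the paper's. Fix $u$ with $T_B(u)\neq0$ and regard the bracket as a polynomial in $s$:
\begin{itemize}
\item its $s^4$-coefficient is $\beta R_B(u)$;
\item if that vanishes and $b\neq0$, the $s^3$-coefficient is $2\beta b\,T_B(u)$;
\item if $R_B(u)=b=0$, the $s$-coefficient is $4\gamma T_B(u)^2$.
\end{itemize}
In every case the polynomial is nonzero. Then intersect $\{\Delta\neq0\}$ with $\{x\notin\operatorname{span}\{1,\overline B\}\}$.

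A smaller point: in checking that $L_w$ is never skew-adjoint, the diagonal and the $(1,4),(4,1)$ entries of \eqref{eq:Lu-matrix} only kill the $\mathbf a$- and $\mathbf d$-coordinates of $w$. You also need the $(2,4),(4,2)$ and $(3,4),(4,3)$ entries, or you can simply read $T_B\not\equiv0$ off \eqref{eq:nonflexibility-quadratic-form}.
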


Thus adding the flexibility defect \(-[p,B,p]\) to the rank-three field \(\mathcal G_B\) activates a third generated dimension.

\begin{proof}
Only the second assertion requires proof.  Let
\[
 R_B(u)=\langle w\times u,w\rangle,
 \qquad c=c_B(p),\quad N=N(p),\quad T=T_B(u).
\]
Polarisation of Equations~\eqref{eq:compact-G-identity} and~\eqref{eq:compact-H-identity} gives
\[
 x\star y
 =\beta\bigl(c_B(x)y+c_B(y)x-\langle x,y\rangle\overline B\bigr)
 -2\gamma\tau_B(x,y)1,
\]
where \(\tau_B\) is the symmetric polarisation of \(T_B\).  In particular,
\[
 p^{\star2}=2\beta cp-\beta N\overline B-2\gamma T1.
\]
A second multiplication gives
\[
 p^{\star3}=A p+D\overline B+E1,
\]
where, writing \(M=N(B)\),
\begin{align*}
 A&=4\beta^2c^2-\beta^2NM-2\beta\gamma bT,\\
 D&=-2\beta^2cN+2\beta\gamma sT,\\
 E&=-6\beta\gamma cT-\beta\gamma NR_B.
\end{align*}
Therefore we obtain
\begin{equation}
\label{eq:rank-jump-determinant}
\begin{aligned}
 p\wedge p^{\star2}\wedge p^{\star3}
 ={}&\Delta_{B,\beta,\gamma}(p)\,
 p\wedge\overline B\wedge1,\\
 \Delta_{B,\beta,\gamma}(p)
 ={}&\beta\gamma\Bigl[
 \beta\bigl(N^2R_B+2cNT\bigr)+4\gamma sT^2
 \Bigr].
\end{aligned}
\end{equation}
In canonical pure coordinates \(w=(w_1,w_2,w_3,w_4)\) and \(u=(u_1,u_2,u_3,u_4)\), we get
\begin{equation}
\label{eq:nonflexibility-quadratic-form}
 T_B(u)=\sqrt2\Bigl[
 w_4(u_1u_2+u_1u_3+u_2u_3)
 -w_2u_1u_4-w_3u_2u_4-w_1u_3u_4
 \Bigr].
\end{equation}
This polynomial vanishes identically only when \(w=0\).  Choose \(u\) with \(T_B(u)\ne0\) and regard the bracketed expression in Equation~\eqref{eq:rank-jump-determinant} as a polynomial in \(s\).  If \(R_B(u)\ne0\), its leading coefficient is \(\beta R_B(u)\).  If \(R_B(u)=0\) and \(b\ne0\), its cubic coefficient is \(2\beta bT_B(u)\).  If \(R_B(u)=b=0\), it equals
\[
 -2\beta\langle w,u\rangle T_B(u)(s^2+\langle u,u\rangle)
 +4\gamma sT_B(u)^2,
\]
which is again nonzero.  Hence \(\Delta_{B,\beta,\gamma}\) is not the zero polynomial.  On the nonempty Zariski-open set where both \(\Delta\ne0\) and \(p\wedge\overline B\wedge1\ne0\), the three elements \(p,p^{\star2},p^{\star3}\) are independent.  The upper bound three follows from Theorem~\ref{thm:universal-three-plane-classification}, completing the proof.
\end{proof}

\begin{corollary}[Flexible-algebra obstruction]
\label{cor:flexible-obstruction}
In every flexible algebra, \([p,B,p]=0\).  Hence the rank-raising term \(\mathcal H_B\) vanishes identically, and the mechanism of Theorem~\ref{thm:associator-induced-rank-four} is unavailable.
\end{corollary}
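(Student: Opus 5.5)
The corollary has two parts: a general identity in flexible algebras, and its consequence for the rank-jump mechanism. I would prove them in that order.

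For the identity, I would apply the definition of flexibility, $x(yx)=(xy)x$ for all $x,y$, with $x=p$ and $y=B$. This gives $p(Bp)=(pB)p$. Since the associator is defined by $[x,y,z]=(xy)z-x(yz)$, we get $[p,B,p]=(pB)p-p(Bp)=0$. By definition~\eqref{eq:FGH-definitions}, $\mathcal H_B(p)=p(Bp)-(pB)p$, so $\mathcal H_B(p)=-[p,B,p]=0$ for every $B$ and every $p$.

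For the consequence, I would interpret the word family of Theorem~\ref{thm:universal-three-plane-classification} over an arbitrary flexible algebra. There every field $Q_{\alpha,\beta,\gamma;B}$ reduces to $\alpha\mathcal F_B+\beta\mathcal G_B$, with no $\gamma$-dependence. In particular the field $\beta\mathcal G_B+\gamma\mathcal H_B$ coincides with $\beta\mathcal G_B$. That is exactly the case $\beta\gamma=0$ in Theorem~\ref{thm:associator-induced-rank-four}, so the discriminant $\Delta_{B,\beta,\gamma}$ in~\eqref{eq:rank-jump-determinant} never becomes active. In the Pentarnion setting the rank bound then follows from Proposition~\ref{prop:compact-GH-identities}.

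The only subtlety is scope. The rank-four computation uses Pentarnion-specific identities, such as~\eqref{eq:compact-G-identity}, so in a general flexible algebra the claim must be read purely as: the rank-raising term is absent. I would state it that way rather than asserting any rank bound for $\mathcal G_B$ in an arbitrary flexible algebra. There is no real obstacle beyond this.
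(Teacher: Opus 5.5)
Your argument is correct and matches the paper, which gives no separate proof because the corollary follows directly from the definition: flexibility with $x=p$, $y=B$ gives $[p,B,p]=(pB)p-p(Bp)=0$, hence $\mathcal H_B=-[p,B,p]\equiv 0$ and $\beta\mathcal G_B+\gamma\mathcal H_B=\beta\mathcal G_B$. Your caution about scope is also right: the rank-three bound for $\mathcal G_B$ uses Pentarnion-specific identities, and $\Pent$ itself is not flexible, so for a general flexible algebra the corollary should be read only as saying that the rank-raising term is absent.
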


\subsection{Canonical three-dimensional normal form and first integrals}
\label{subsec:canonical-rank-four-normal-form}

We use the adapted basis introduced in Equation~\eqref{eq:adapted-basis}.  In particular, Lemma~\ref{lem:adapted-products} gives
\begin{equation}
\label{eq:active-adapted-products}
 \mathbf a^2=\mathbf d^2=-1,
 \qquad \mathbf a\mathbf d=-\sqrt2\,\mathbf a,
 \qquad \mathbf d\mathbf a=\sqrt2\,\mathbf a.
\end{equation}
The three-space
\[
 S=\operatorname{span}_{\R}\{1,\mathbf a,\mathbf d\}
\]
is therefore a nonassociative subalgebra of $\Pent$; for example,
\((\mathbf a\mathbf d)\mathbf d=2\mathbf a\), whereas
\(\mathbf a(\mathbf d\mathbf d)=-\mathbf a\).

\begin{theorem}[Canonical rank-four normal form]
\label{thm:canonical-rank-four-normal-form}
Fix \(B=\mathbf a\) and consider
\begin{equation}
\label{eq:alpha-beta-gamma-flow}
 \dot p=\alpha\mathcal F_{\mathbf a}(p)+\beta\mathcal G_{\mathbf a}(p)+\gamma\mathcal H_{\mathbf a}(p).
\end{equation}
For every initial value \(p_0=s_0+y_0\mathbf a+z_0\mathbf d\), the solution remains in \(S\).  Writing
\[
 p=s+y\mathbf a+z\mathbf d,
 \qquad
 \mu=2\alpha+\beta,
 \qquad
 \nu=\beta,
 \qquad
 \chi=-\sqrt2\gamma,
\]
the Pentarnion equation is exactly equivalent to
\begin{equation}
\label{eq:canonical-rank-four-system}
\begin{aligned}
 \dot s&=-2y(\mu s+\chi z),\\
 \dot y&=\mu(s^2-y^2)+(2\nu-\mu)z^2,\\
 \dot z&=-2\nu yz.
\end{aligned}
\end{equation}
The exact one-generator rank on this leaf is classified in Corollary~\ref{cor:canonical-leaf-rank-classification}.
\end{theorem}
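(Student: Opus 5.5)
The plan is to split the statement into an invariance claim and a coordinate computation, and to use the compact identities of Proposition~\ref{prop:compact-GH-identities} rather than expanding the six words one by one.

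\textbf{Invariance of \(S\).} By Equation~\eqref{eq:active-adapted-products}, \(S=\operatorname{span}_{\R}\{1,\mathbf a,\mathbf d\}\) is closed under the product; the same closure is the one used in Corollary~\ref{cor:fixed-subalgebra}.
\begin{itemize}
\item Since the coefficient \(B=\mathbf a\) lies in \(S\), every word \(M_r(\mathbf a,p,p)\) lies in \(S\) whenever \(p\in S\).
\item Hence the right-hand side of \eqref{eq:alpha-beta-gamma-flow} is tangent to the linear subspace \(S\).
\item Uniqueness of solutions for this polynomial vector field then keeps every solution starting in \(S\) inside \(S\), as in Proposition~\ref{prop:complex-slice}.
\item Alternatively, Theorem~\ref{thm:universal-three-plane-classification} gives invariance of \(\operatorname{span}\{1,\mathbf a,p_0\}\subseteq S\) directly.
\end{itemize}

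\textbf{Setup in scalar--pure form.} Write \(p=(s,u)\) with \(u=y\mathbf a+z\mathbf d\), and \(B=(0,\mathbf a)\), so \(b=0\) and \(w=\mathbf a\). The adapted basis is orthonormal, so the auxiliary quantities are:
\begin{itemize}
\item \(c_{\mathbf a}(p)=-\langle\mathbf a,u\rangle=-y\);
\item \(N(p)=s^2+y^2+z^2\);
\item \(\overline{\mathbf a}=-\mathbf a\).
\end{itemize}
For the cross product, \(\mathbf a\times\mathbf a=0\) and \(\mathbf a\times\mathbf d=-\sqrt2\,\mathbf a\). This gives \(w\times u=-\sqrt2 z\,\mathbf a\), and therefore \(T_{\mathbf a}(u)=-\sqrt2\,yz\).

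\textbf{The three component fields.}
\begin{itemize}
\item For \(\mathcal F_{\mathbf a}\), use \(pp=(s^2-y^2-z^2)+2s(y\mathbf a+z\mathbf d)\). The symmetric sum \(\mathbf a(pp)+(pp)\mathbf a\) kills the alternating part \(\mathbf a\times u\) and doubles the Jordan part. This gives \(\mathcal F_{\mathbf a}=-4sy+2(s^2-y^2-z^2)\mathbf a\).
\item Equation~\eqref{eq:compact-G-identity} gives \(\mathcal G_{\mathbf a}=-2y(s+y\mathbf a+z\mathbf d)+(s^2+y^2+z^2)\mathbf a\).
\item Equation~\eqref{eq:compact-H-identity} gives \(\mathcal H_{\mathbf a}=2\sqrt2\,yz\).
\end{itemize}

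\textbf{Matching coefficients.} Take \(\alpha\mathcal F+\beta\mathcal G+\gamma\mathcal H\) and read off each coordinate.
\begin{itemize}
\item Scalar part: \(-(4\alpha+2\beta)sy+2\sqrt2\gamma yz=-2y(\mu s+\chi z)\), using \(\chi=-\sqrt2\gamma\).
\item \(\mathbf a\)-part: \(2\alpha(s^2-y^2-z^2)+\beta(s^2-y^2+z^2)=\mu(s^2-y^2)+(\beta-2\alpha)z^2\). Since \(\beta-2\alpha=2\nu-\mu\), this is the \(\dot y\) equation.
\item \(\mathbf d\)-part: \(-2\beta yz=-2\nu yz\).
\end{itemize}
Together these give \eqref{eq:canonical-rank-four-system}. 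Because \(S\) is invariant and the coordinates \((s,y,z)\) parametrize \(S\) linearly, the Pentarnion equation on \(S\) is exactly equivalent to this system.

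\textbf{Main obstacle.} The only delicate step is sign bookkeeping: the orientation \(\mathbf a\times\mathbf d=-\sqrt2\,\mathbf a\), the sign convention in \(T_B\), and \(\overline{\mathbf a}=-\mathbf a\). I would cross-check the \(\mathcal H\) term directly as \(p(\mathbf a p)-(p\mathbf a)p\) for \(p=s+z\mathbf d\) plus a \(y\) term, using \((\mathbf a\mathbf d)\mathbf d=2\mathbf a\).
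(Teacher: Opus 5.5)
Your proposal is correct and follows the paper's proof. Both evaluate $\mathcal F_{\mathbf a},\mathcal G_{\mathbf a},\mathcal H_{\mathbf a}$ on $S$, and your three formulas coincide with the paper's; the only differences are that you obtain $\mathcal G_{\mathbf a}$ and $\mathcal H_{\mathbf a}$ from Proposition~\ref{prop:compact-GH-identities} rather than from the adapted products, and you spell out the invariance of $S$ (closure under multiplication plus uniqueness), which the paper leaves implicit.
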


\begin{proof}
For \(p=s+y\mathbf a+z\mathbf d\), Equation~\eqref{eq:active-adapted-products} gives
\begin{align*}
 \mathcal F_{\mathbf a}(p)&=-4sy+2(s^2-y^2-z^2)a,\\
 \mathcal G_{\mathbf a}(p)&=-2sy+(s^2-y^2+z^2)a-2yz\,d,\\
 \mathcal H_{\mathbf a}(p)&=2\sqrt2\,yz.
\end{align*}
Their stated linear combination is Equation~\eqref{eq:canonical-rank-four-system}.
\end{proof}

\begin{corollary}[Exact rank classification on the canonical leaf]
\label{cor:canonical-leaf-rank-classification}
Let \(Q_{\mu,\nu,\chi}\) denote the quadratic map in Equation~\eqref{eq:canonical-rank-four-system}, and let \(\star\) be its polarisation.  For \(v=(s,y,z)^T\), put \(v^{\star2}=Q_{\mu,\nu,\chi}(v)\) and \(v^{\star3}=v\star v^{\star2}\).  Then
\begin{equation}
\label{eq:canonical-leaf-rank-determinant}
 \det\bigl[v,v^{\star2},v^{\star3}\bigr]
 =z\bigl((\mu-\nu)s+\chi z\bigr)\,\Pi_{\mu,\nu,\chi}(s,y,z),
\end{equation}
where
\begin{align}
\label{eq:canonical-leaf-Pi}
 \Pi_{\mu,\nu,\chi}={}&
 \mu^2s^4+2\mu(\mu-\nu)s^2y^2
 +2\mu(2\nu-\mu)s^2z^2+4\mu\chi sy^2z\notag\\
 &+\mu(\mu-2\nu)y^4
 +2(\mu-\nu)(\mu-2\nu)y^2z^2
 +(\mu-2\nu)^2z^4.
\end{align}
The induced algebra has generic one-generator rank four unless
\begin{equation}
\label{eq:canonical-rank-three-strata}
 (\mu=\nu,\ \chi=0)
 \qquad\text{or}\qquad
 (\mu=\nu=0).
\end{equation}
On the two exceptional strata it has one-generator rank at most three.  In particular, \(\beta\gamma\ne0\) in Equation~\eqref{eq:alpha-beta-gamma-flow} implies generic rank four on \(S\).
\end{corollary}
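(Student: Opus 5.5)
Starting from the normal form of Theorem~\ref{thm:canonical-rank-four-normal-form}, write \(Q=Q_{\mu,\nu,\chi}\) on \(\R^3\) with coordinates \(v=(s,y,z)\). The first step is to polarise it explicitly: for \(v=(s,y,z)\) and \(v'=(s',y',z')\),
\[
 v\star v'=\Bigl(-\mu(sy'+s'y)-\chi(yz'+y'z),\;\mu(ss'-yy')+(2\nu-\mu)zz',\;-\nu(yz'+y'z)\Bigr).
\]
Next, compute \(v^{\star2}=Q(v)\) and \(v^{\star3}=v\star v^{\star2}\) componentwise; each component of \(v^{\star3}\) is a cubic form. Then expand the \(3\times3\) determinant \(\det[v,v^{\star2},v^{\star3}]\), a homogeneous polynomial of degree \(1+2+3=6\).

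The heart of the proof is factoring this sextic as in \eqref{eq:canonical-leaf-rank-determinant}. Two shortcuts help. First, on \(z=0\) the system reduces to \(\dot s=-2\mu sy\), \(\dot y=\mu(s^2-y^2)\), which is complex squaring scaled by \(\mu\) on \(\C_{\mathbf a}\). Hence \(v^{\star2}\) and \(v^{\star3}\) lie in \(\operatorname{span}\{v\}\) together with the \(z\)-free plane, and the determinant vanishes, so \(z\) divides it. Second, when \((\mu-\nu)s+\chi z=0\), one should check directly that \(v^{\star2}\) and \(v^{\star3}\) remain dependent with \(v\). Looking at the \(s\)- and \(z\)-rows of \(Q\), one finds \(\dot s/\dot z\) proportional to \(s/z\) exactly there: \(-2y(\mu s+\chi z)\) versus \(-2\nu yz\) gives ratio \(s/z\) iff \((\mu-\nu)s+\chi z=0\). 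So the plane \(\operatorname{span}\{v,(0,1,0)\}\) is preserved, the rank drops, and the linear factor divides. After dividing out \(z((\mu-\nu)s+\chi z)\), the quartic cofactor is identified with \(\Pi_{\mu,\nu,\chi}\) by comparing coefficients, which is a routine symbolic check that I would do by computer algebra or by evaluating at enough monomials. I expect this bookkeeping to be the main obstacle; the cleanest route is to expand in \(y\) and use the parity \(y\mapsto -y\), under which \(v^{\star2}\) has a definite symmetry.

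For the classification, generic rank four holds exactly when the determinant is not the zero polynomial, since a dimension-three generated subalgebra is the maximum on a three-space and is attained on the Zariski-open complement of its zero set. The product vanishes identically iff one factor does. The factor \(z\) never does. The linear factor vanishes iff \(\mu=\nu\) and \(\chi=0\). For \(\Pi\), its \(s^4\) coefficient is \(\mu^2\), so \(\mu=0\) is needed; then the \(z^4\) coefficient \(4\nu^2\) forces \(\nu=0\). With \(\mu=\nu=0\) every coefficient, including \(4\mu\chi\), vanishes. This yields exactly the strata \eqref{eq:canonical-rank-three-strata}, on which the determinant is identically zero, so the rank is at most three.

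Finally, \(\beta\gamma\neq0\) gives \(\nu=\beta\neq0\) and \(\chi=-\sqrt2\gamma\neq0\). This excludes both strata, so the rank is generically four on \(S\), consistent with Theorem~\ref{thm:associator-induced-rank-four}.
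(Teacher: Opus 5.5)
Your polarisation is correct, and your route to \eqref{eq:canonical-leaf-rank-determinant} is essentially the paper's: expand the determinant symbolically and read off the factors. The invariant-plane explanation of the factors \(z\) and \((\mu-\nu)s+\chi z\) is a nice addition. For the second factor, however, you have only checked that \(v^{\star2}\in\operatorname{span}\{v,e_y\}\), where \(e_y=(0,1,0)\). To put \(v^{\star3}\) in the same plane you also need \(v\star e_y=(-(\mu s+\chi z),-\mu y,-\nu z)\) to lie there. It does, because \(\det[v,e_y,v\star e_y]=z\bigl((\mu-\nu)s+\chi z\bigr)\).

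The genuine gap is your final step, ``on which the determinant is identically zero, so the rank is at most three''. The same gap affects the ``exactly when'' in your rank criterion. Identical vanishing of \(\det[v,v^{\star2},v^{\star3}]\) only says that \(\operatorname{span}\{v,v^{\star2}\}\) is stable under \(v\star(\cdot)\). The subalgebra generated by \(v\) also contains \(v^{\star2}\star v^{\star2}\) and every other nonassociative \(\star\)-monomial in \(v\), so no dimension bound follows from this determinant alone.

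The paper supplies the missing step by identifying the two strata. \(\mu=\nu,\ \chi=0\) forces \(\alpha=\gamma=0\), the pure \(\beta\mathcal G_{\mathbf a}\) family. \(\mu=\nu=0\) forces \(\alpha=\beta=0\), the pure \(\gamma\mathcal H_{\mathbf a}\) family. It then applies Proposition~\ref{prop:compact-GH-identities}, whose polarisations preserve \(\operatorname{span}\{p,\overline B\}\) and \(\operatorname{span}\{p,1\}\).

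Any of the following would also close the gap:
\begin{enumerate}
\item Verify directly that \(\operatorname{span}\{v,(0,1,0)\}\) is closed under \(\star\) on the first stratum, and \(\operatorname{span}\{v,(1,0,0)\}\) on the second.
\item Argue in general. Where \(v\wedge v^{\star2}\neq0\), put \(g(v)=v\star v^{\star2}=a\,v+b\,v^{\star2}\) with smooth \(a,b\). Differentiating in the direction \(v^{\star2}\) gives \(v^{\star2}\star v^{\star2}=Dg(v)[v^{\star2}]-2\,v\star g(v)\), and both terms lie in \(\operatorname{span}\{v,v^{\star2}\}\).
\end{enumerate}
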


\begin{proof}
Polarising the three coordinate polynomials in Equation~\eqref{eq:canonical-rank-four-system} and taking the determinant gives Equations~\eqref{eq:canonical-leaf-rank-determinant} and~\eqref{eq:canonical-leaf-Pi}.  This determinant is the zero polynomial precisely when its linear factor is identically zero, which gives \(\mu=\nu,\chi=0\), or when \(\Pi\) is identically zero.  In the latter case the coefficient of \(s^4\) gives \(\mu=0\), and the coefficient of \(z^4\) then gives \(\nu=0\).  Outside these cases a nonempty Zariski-open set has nonzero determinant and hence generates the full three-space.  The first exceptional stratum is the pure \(\mathcal G_{\mathbf a}\) family, and the second is the pure \(\mathcal H_{\mathbf a}\) family; Proposition~\ref{prop:compact-GH-identities} gives the rank-three upper bound in both cases.
\end{proof}

\begin{theorem}[Complete integrability on a generic rank-four leaf]
\label{thm:generic-rank-four-integrability}
Assume
\begin{equation}
\label{eq:generic-integrability-conditions}
 z\ne0,
 \qquad
 m:=\nu-\mu\ne0,
 \qquad
 \nu\ne0,
 \qquad
 \delta:=2\nu-\mu\ne0.
\end{equation}
On every connected component where
\[
 L(x):=mx-\chi\ne0,
 \qquad x=\frac{s}{z},
 \qquad q=\frac{y}{z},
\]
system Equation~\eqref{eq:canonical-rank-four-system} has the two first integrals
\begin{equation}
\label{eq:generic-first-integral-K}
 K=z|L(x)|^{\nu/m}
\end{equation}
and
\begin{equation}
\label{eq:generic-first-integral-J}
 J=
 \left(
 q^2+x^2-\frac{2\chi}{\nu}x
 +1+\frac{2\chi^2}{\nu\delta}
 \right)|L(x)|^{-\delta/m}.
\end{equation}
They are functionally independent on a nonempty open subset.  Moreover, every regular trajectory is reduced to the quadrature
\begin{equation}
\label{eq:generic-rank-four-quadrature}
 t-t_0
 =\pm\frac{1}{2K}
 \int_{x_0}^{x}
 \frac{|L(\xi)|^{\nu/m-1}}
 {\sqrt{\mathcal R_J(\xi)}}\,d\xi,
\end{equation}
where
\begin{equation}
\label{eq:effective-radical}
 \mathcal R_J(x)
 =J|L(x)|^{\delta/m}
 -x^2+\frac{2\chi}{\nu}x
 -1-\frac{2\chi^2}{\nu\delta}.
\end{equation}
In particular, if one connected component of \(\{\mathcal R_J\ge0\}\) is a compact interval \([x_-,x_+]\) lying away from \(L=0\), with simple endpoint zeros and \(\mathcal R_J>0\) in its interior, then the corresponding common level of \((J,K)\) is a bounded periodic orbit.  Its period is
\begin{equation}
\label{eq:generic-period-formula}
 T=\frac{1}{|K|}
 \int_{x_-}^{x_+}
 \frac{|L(x)|^{\nu/m-1}}{\sqrt{\mathcal R_J(x)}}\,dx.
\end{equation}
\end{theorem}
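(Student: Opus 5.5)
The plan is to exploit the homogeneity of the quadratic system \eqref{eq:canonical-rank-four-system}. On the region \(z\ne0\) I pass to the projective coordinates \(x=s/z\), \(q=y/z\) and keep \(z\) as a radial variable.

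\emph{Reduced equations.} A direct quotient-rule computation gives
\[
\dot x=\frac{\dot s z-s\dot z}{z^2}=\frac{-2yz(\mu s+\chi z)+2\nu yzs}{z^2}=2zq\,L(x),
\qquad
\dot z=-2\nu z\,(qz),
\]
and
\[
\dot q=\frac{\dot y}{z}-\frac{y\dot z}{z^2}=z\bigl[\mu(x^2-q^2)+\delta+2\nu q^2\bigr]
= z\bigl[\delta q^2+\mu x^2+\delta\bigr].
\]
The common factor \(z\) is exactly what makes the homogeneous reduction work. On a component where \(L\ne0\), I use \(x\) as the independent variable wherever \(q\ne0\), and extend across \(q=0\) by continuity.

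\emph{The integral \(K\).} From the reduced equations,
\[
\frac{d\ln|z|}{dx}=-\frac{\nu}{L(x)}=-\frac{\nu}{m}\,\frac{d}{dx}\ln|L(x)|,
\]
so \(z|L|^{\nu/m}\) is constant. Equivalently, one checks directly that \(\dot K=0\) from \(\dot x\) and \(\dot z\); this also covers \(q=0\).

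\emph{The integral \(J\).} Setting \(Q=q^2\),
\[
\frac{dQ}{dx}=\frac{2q\dot q}{\dot x}=\frac{\delta Q+\mu x^2+\delta}{L(x)},
\]
which is a linear first-order ODE in \(Q\). Its integrating factor is \(|L|^{-\delta/m}\), because \(\tfrac{d}{dx}\ln|L|^{-\delta/m}=-\delta/L\). I then look for a quadratic particular solution \(Q_p=-x^2+ax+c\). Matching coefficients of \(x^2\), \(x\) and \(1\) in \(L\,Q_p'=\delta Q_p+\mu x^2+\delta\) should give
\[
a=\frac{2\chi}{\nu},\qquad c=-1-\frac{2\chi^2}{\nu\delta}.
\]
The \(x^2\) coefficient is consistent because \(-2m=-\delta+\mu\). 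This step is where the hypotheses \(\nu\ne0\) and \(\delta\ne0\) enter. Then \(J=(Q-Q_p)|L|^{-\delta/m}\) is the displayed expression, and I verify \(\dot J=0\) directly in time to avoid dividing by \(q\).

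\emph{Independence.} The function \(J\) does not depend on \(z\), while \(\partial K/\partial z=|L|^{\nu/m}\ne0\), and \(\partial J/\partial y=2q|L|^{-\delta/m}/z\ne0\) whenever \(q\ne0\). Hence \(dK\wedge dJ\ne0\) on the open set \(\{q\ne0,\ z\ne0,\ L\ne0\}\).

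\emph{Quadrature.} On a level set I write \(z=K|L|^{-\nu/m}\) and \(q=\pm\sqrt{\mathcal R_J(x)}\), with \(\mathcal R_J=J|L|^{\delta/m}-Q_p\). Substituting into \(dt=dx/(2zqL)\) gives \eqref{eq:generic-rank-four-quadrature}. On a fixed component \(L\) has constant sign, so \(1/(|L|^{-\nu/m}L)\) equals \(\pm|L|^{\nu/m-1}\), and that sign is absorbed into the \(\pm\).

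\emph{Periodic orbit.} Suppose \([x_-,x_+]\) is a compact component of \(\{\mathcal R_J\ge0\}\) with simple endpoint zeros. The level set in \((x,q)\) is then a closed oval \(q^2=\mathcal R_J(x)\), with \(z\) determined by \(x\), so it is a smooth closed curve in \(S\). At the endpoints \(q=0\) but \(\dot q\ne0\), since a simple zero of \(\mathcal R_J\) gives a nonzero \(dQ/dx\). Hence the trajectory turns and traverses the whole oval, and the set contains no equilibria. The travel time on each branch is finite because the singularities are integrable \((x-x_\pm)^{-1/2}\) singularities. Adding the two branches \(q>0\) and \(q<0\) gives twice the one-branch integral \(\tfrac{1}{2|K|}\int\), which is \eqref{eq:generic-period-formula}.

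I expect the main obstacle to be twofold. First, the coefficient-matching for \(Q_p\) together with the sign bookkeeping of \(|L|\) versus \(L\) in both the integral and the quadrature. Second, justifying passage through the turning points \(q=0\), where \(x\) fails to be a valid time parameter. For the latter I would argue directly with the planar field \((\dot x,\dot q)\) divided by the nonvanishing factor \(z\).
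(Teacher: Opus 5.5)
Your proposal is correct and follows essentially the paper's route. Keeping the factor $z$ in $\dot x=2zqL$ and $\dot q=z(\mu x^2+\delta q^2+\delta)$ is equivalent to the paper's projective time $d\tau=z\,dt$, and your integrating-factor construction of $J$ is a constructive version of the paper's check that $P=q^2+x^2-\frac{2\chi}{\nu}x+1+\frac{2\chi^2}{\nu\delta}$ satisfies $dP/d\tau=2\delta qP$. The independence, quadrature and period steps also match the paper's, and your check that $\dot q\neq0$ at the simple endpoint zeros is more explicit than the paper's. One sign slip: from $J=(Q-Q_p)|L|^{-\delta/m}$ you get $q^2=J|L|^{\delta/m}+Q_p$, so $\mathcal R_J=J|L|^{\delta/m}+Q_p$, not $J|L|^{\delta/m}-Q_p$; the corrected form is the one that agrees with the displayed $\mathcal R_J$.
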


\begin{proof}
Use the projective time \(d\tau=z\,dt\).  A direct calculation gives
\begin{equation}
\label{eq:projective-rank-four-system}
 \frac{dx}{d\tau}=2qL(x),
 \qquad
 \frac{dq}{d\tau}=\mu x^2+\delta(q^2+1),
 \qquad
 \frac{dz}{d\tau}=-2\nu qz.
\end{equation}
Consequently,
\[
 \frac{dL}{d\tau}=2mqL,
 \qquad
 \frac{d}{d\tau}
 \left(
 q^2+x^2-\frac{2\chi}{\nu}x
 +1+\frac{2\chi^2}{\nu\delta}
 \right)
 =2\delta q
 \left(
 q^2+x^2-\frac{2\chi}{\nu}x
 +1+\frac{2\chi^2}{\nu\delta}
 \right).
\]
The logarithmic derivatives cancel exactly in Equations~\eqref{eq:generic-first-integral-K} and~\eqref{eq:generic-first-integral-J}.  Since \(\partial K/\partial z\ne0\), while \(\partial J/\partial q\ne0\) when \(q\ne0\), the integrals are generically independent.  Solving Equations~\eqref{eq:generic-first-integral-K} and~\eqref{eq:generic-first-integral-J} for \(z\) and \(q^2\), and substituting into \(\dot x=2qzL\), gives Equation~\eqref{eq:generic-rank-four-quadrature}.  Under the final hypotheses, the two signs of \(q\) form a compact smooth oval and the vector field does not vanish on it.  The period is twice the travel time from \(x_-\) to \(x_+\), giving Equation~\eqref{eq:generic-period-formula}.
\end{proof}

\subsection{A strongly non-linear elliptic periodic orbit}
\label{subsec:rank-four-elliptic-example}

Take
\begin{equation}
\label{eq:rank-four-example-compact}
 \dot p=\mathcal F_{\mathbf a}(p)+\mathcal G_{\mathbf a}(p)
 +\frac1{\sqrt2}\mathcal H_{\mathbf a}(p),
 \qquad p(0)=1+\mathbf d.
\end{equation}
Here \((\mu,\nu,\chi)=(3,1,-1)\), so Equation~\eqref{eq:canonical-rank-four-system} becomes
\begin{equation}
\label{eq:rank-four-example-system}
\begin{aligned}
 \dot s&=-2y(3s-z),\\
 \dot y&=3(s^2-y^2)-z^2,\\
 \dot z&=-2yz,
\end{aligned}
 \qquad (s,y,z)(0)=(1,0,1).
\end{equation}
The generic fractional-power integrals simplify to rational functions.

\begin{proposition}[Rational first integrals]
\label{prop:rank-four-example-integrals}
On \(z\ne0\), system equation~\eqref{eq:rank-four-example-system} has the first integrals
\begin{equation}
\label{eq:rank-four-example-I1}
 I_1=\frac{2s-z}{z^3}
\end{equation}
and
\begin{equation}
\label{eq:rank-four-example-I2}
 I_2=\frac{s^2+y^2+2sz-z^2}{z^3}.
\end{equation}
They take the values \(I_1=1\) and \(I_2=2\) at the prescribed initial point.
\end{proposition}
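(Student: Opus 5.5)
The plan is to verify directly that the time derivatives of \(I_1\) and \(I_2\) vanish along \eqref{eq:rank-four-example-system} on \(\{z\ne0\}\), and then evaluate both at \((1,0,1)\). Working with the numerators is cleanest. Put \(P_1=2s-z\) and \(P_2=s^2+y^2+2sz-z^2\). Since \(\dot z=-2yz\), we have \(\frac{d}{dt}z^{-3}=6y\,z^{-3}\). Hence \(I_k=P_k/z^3\) is conserved if and only if
\[
 \dot P_k=-6yP_k,
\]
that is, each \(P_k\) is a Darboux polynomial with cofactor \(-6y\). Phrased this way, the whole claim reduces to two polynomial identities.

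For \(P_1\), compute
\[
 \dot P_1=2\dot s-\dot z=-4y(3s-z)+2yz=-12ys+6yz=-6y(2s-z).
\]
This is the first identity.

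For \(P_2\), compute
\[
 \dot P_2=2s\dot s+2y\dot y+2\dot s\,z+2s\dot z-2z\dot z .
\]
We substitute \(\dot s=-2y(3s-z)\), \(\dot y=3s^2-3y^2-z^2\) and \(\dot z=-2yz\). Every term then carries a factor \(y\), and the remaining factor should collapse to \(-6(s^2+y^2+2sz-z^2)\). Collecting monomials:
\begin{itemize}
\item from \(2s\dot s\): \(-12ys^2+4ysz\);
\item from \(2y\dot y\): \(6ys^2-6y^3-2yz^2\);
\item from \(2z\dot s\): \(-12ysz+4yz^2\);
\item from \(2s\dot z\): \(-4ysz\);
\item from \(-2z\dot z\): \(+4yz^2\).
\end{itemize}
The sum is \(-6ys^2-6y^3-12ysz+6yz^2=-6y(s^2+y^2+2sz-z^2)\), as required. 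This bookkeeping is the only place a slip could occur, so I would double-check it by evaluating at a random numerical point.

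At \((s,y,z)=(1,0,1)\) we get \(I_1=(2-1)/1=1\) and \(I_2=(1+0+2-1)/1=2\). Finally, the set \(\{z\ne0\}\) is invariant, since \(\dot z=-2yz\) gives \(z(t)=z(0)\exp(-2\int y\)), so the integrals are defined along the whole trajectory.

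As a consistency check, these integrals should be the specialisations of \(K\) and \(J\) from Theorem~\ref{thm:generic-rank-four-integrability} with \(m=-2\), \(\nu/m=-1/2\) and \(\delta/m=1/2\). For example, \(K^{-2}=(2s/z+1)^{-1}\cdots\) gives \(L=-2x+1\), which is proportional to \((2s-z)/z\) up to sign. Comparing exponents this way recovers the \(z^{-3}\) normalisation, but the direct computation above is self-contained.
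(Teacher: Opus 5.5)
Your argument is correct and is the same direct differentiation the paper uses. Writing it as the Darboux identity $\dot P_k=-6yP_k$ with $P_k=z^3I_k$ organises the check neatly, your monomial bookkeeping and initial values are right, and the only slip is in the optional consistency remark: with $L(x)=1-2x$ and $\nu/m=-1/2$ one gets $K^{-2}=z^{-2}|L(x)|=|2s-z|/|z|^3=|I_1|$ and $J/K=I_2$, not $K^{-2}=(2s/z+1)^{-1}\cdots$.
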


\begin{proof}
Direct differentiation with Equation~\eqref{eq:rank-four-example-system} gives \(\dot I_1=\dot I_2=0\).  Equivalently, these are the specialisations of Equations~\eqref{eq:generic-first-integral-K} and~\eqref{eq:generic-first-integral-J} after eliminating their common square root.
\end{proof}

\begin{theorem}[Analytic resultant curve and periodicity]
\label{thm:rank-four-example-periodic-orbit}
The solution of Equation~\eqref{eq:rank-four-example-system} is a bounded periodic orbit of affine span three.  It is the compact real component of
\begin{equation}
\label{eq:rank-four-example-two-surfaces}
 2s-z-z^3=0,
 \qquad
 s^2+y^2+2sz-z^2-2z^3=0.
\end{equation}
Eliminating \(s\) gives the resultant curve
\begin{equation}
\label{eq:rank-four-example-resultant}
 4y^2+z^6+6z^4-8z^3+z^2=0.
\end{equation}
Since \(z>0\), the substitution \(Y=2y/z\) gives the smooth quartic model
\begin{equation}
\label{eq:rank-four-example-quartic}
 Y^2=P(z):=-z^4-6z^2+8z-1
 =(1-z)(z^3+z^2+7z-1).
\end{equation}
Let \(r\) be the unique positive root of \(z^3+z^2+7z-1\).  Then the orbit is parametrised by the compact oval \(r\le z\le1\), and its period is
\begin{equation}
\label{eq:rank-four-example-period}
 T=2\int_r^1\frac{dz}{z^2\sqrt{P(z)}}.
\end{equation}
The birational change
\begin{equation}
\label{eq:rank-four-example-weierstrass-map}
 X=\frac{1+z}{1-z},
 \qquad
 W=\frac{Y}{(1-z)^2}
\end{equation}
identifies the projective closure with the elliptic curve
\begin{equation}
\label{eq:rank-four-example-weierstrass}
 W^2=X^3-X-1.
\end{equation}
\end{theorem}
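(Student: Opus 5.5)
The plan is to work entirely from the two rational first integrals of Proposition~\ref{prop:rank-four-example-integrals}. Since $z(0)=1>0$ and $\dot z=-2yz$, the sign of $z$ is preserved, so $z>0$ along the solution and both integrals are defined. Their initial values $I_1=1$ and $I_2=2$ give, after clearing $z^3$, exactly the two surfaces in Equation~\eqref{eq:rank-four-example-two-surfaces}. From the first we have $s=(z+z^3)/2$. Substituting this into the second gives
\[
\tfrac14(z+z^3)^2+y^2+z(z+z^3)-z^2-2z^3=0 .
\]
Multiplying by $4$ and expanding yields $4y^2+z^6+6z^4-8z^3+z^2=0$, which is Equation~\eqref{eq:rank-four-example-resultant}. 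Dividing by $z^4$ and setting $Y=2y/z$ then gives $Y^2=-z^2-6+8/z-1/z^2$. This is not yet the stated quartic. I will recheck the normalisation here, since the claimed $P(z)$ should arise instead from $4y^2/z^2=-(z^4+6z^2-8z+1)$, that is, from dividing by $z^2$. The factorisation $P(z)=(1-z)(z^3+z^2+7z-1)$ is checked by direct multiplication.

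Next I analyse the sign of $P$ on $z>0$. The cubic $g(z)=z^3+z^2+7z-1$ is strictly increasing on $z>0$, with $g(0)=-1$ and $g(1)=8$. Hence it has a unique positive root $r\in(0,1)$, and $P>0$ exactly on $(r,1)$ among positive $z$. Both endpoints are simple zeros, since $r\neq1$. The projection of the orbit to $(z,Y)$ is therefore the compact oval $\{Y^2=P(z),\ r\le z\le1\}$, and $s$ and $y$ are recovered polynomially from $(z,Y)$. So the real level set component through $(1,0,1)$ is a smooth closed curve in $\R^3$.

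To obtain periodicity, I first check that the vector field has no equilibria on this component. Since $z>0$, the condition $\dot z=0$ forces $y=0$, so $z\in\{r,1\}$. At those points $\dot y=3s^2-z^2$, with $s=(z+z^3)/2$. At $z=1$ this gives $\dot y=3-1\neq0$. At $z=r$ it gives $\tfrac34 r^2(1+r^2)^2-r^2$, which is nonzero because $r<1/7$ makes $(1+r^2)^2<4/3$. A compact invariant curve without equilibria is a periodic orbit. The affine span is three because $z$ is nonconstant, so the points are not collinear, and $s$ is a nonlinear function of $z$, so the points do not lie in a plane.

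For the period, I use $\dot z=-2yz=-z^2Y$. The travel time from $z=r$ to $z=1$ is then $\int dz/(z^2\sqrt{P})$, and the full period is twice that, which is Equation~\eqref{eq:rank-four-example-period}. For the Weierstrass form, I substitute $z=(X-1)/(X+1)$ into $P$ and compute $(1-z)^4=16/(X+1)^4$. The claim is that $P(z)/(1-z)^4=X^3-X-1$ up to the stated scaling, which I will verify by expanding $(X+1)^4P\bigl((X-1)/(X+1)\bigr)$. The main obstacle is bookkeeping: making the power of $z$ in the $Y$-substitution and the normalising constants in the birational map consistent, which may require rescaling $W$ by a constant. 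The curve is smooth because $P$ has distinct roots (the cubic is strictly monotone and $g(1)\neq0$), so the genus is one.
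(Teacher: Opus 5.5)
Your overall route is the paper's: the integrals $I_1=1$, $I_2=2$, elimination of $s$, the quartic model, the sign analysis of $P$, absence of equilibria on the oval, the period from $\dot z=-z^2Y$, and substitution into the Weierstrass form. Your direct check that $\dot y=3s^2-z^2\neq0$ at the two turning points is a valid alternative to the paper's identity $2\dot Y=-z^2P'(z)$. It does need $r<1/7$, which follows from $g(1/7)=8/343>0$.

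The affine-span step, however, has a genuine gap. Knowing that $s=(z+z^3)/2$ is nonlinear in $z$ only excludes planes $As+Cz+D=0$ with no $y$-term. A curve can lie in a plane such as $y=\lambda s+\mu z+\nu$ while its $(s,z)$-projection is nonlinear. The missing idea is that over every $z\in(r,1)$ the oval contains two points $y=\pm\tfrac z2\sqrt{P(z)}$ with the same $s$ and $z$. Subtracting the two instances of a relation $As+By+Cz+D=0$ therefore forces $B=0$. Only then does your nonlinearity argument give $A=C=D=0$. Your ``not collinear'' remark is then superfluous.

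The Weierstrass step is left as a plan, with a hedge about rescaling $W$; in fact no rescaling is needed. Since $P=(1-z)g$ with $g(z)=z^3+z^2+7z-1$, one has $W^2=g(z)/(1-z)^3$. With $z=(X-1)/(X+1)$ one computes $(X+1)^3g(z)=8(X^3-X-1)$ and $(1-z)^3=8/(X+1)^3$, so $W^2=X^3-X-1$ exactly. The inverse $z=(X-1)/(X+1)$, $Y=4W/(X+1)^2$ gives birationality, and the discriminant $-23\neq0$ gives smoothness.

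Two smaller points. Your first division by $z^4$ was simply wrong; the correct step, which you then identify, is dividing by $z^2$ because $4y^2=z^2Y^2$. Also, simplicity of the root $r$ does not follow from $r\neq1$ but from $g'(r)=3r^2+2r+7>0$.
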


\begin{proof}
The integral values from Proposition~\ref{prop:rank-four-example-integrals} give Equation~\eqref{eq:rank-four-example-two-surfaces}.  The first equation gives \(s=(z+z^3)/2\); substitution into the second gives Equation~\eqref{eq:rank-four-example-resultant}.  The cubic factor in equation~\eqref{eq:rank-four-example-quartic} is strictly increasing because its derivative \(3z^2+2z+7\) is positive.  It is negative at zero and positive at one, so it has a unique root \(r\in(0,1)\), and \(P(z)>0\) for \(r<z<1\).  The quartic discriminant is \(-94208\), hence the endpoint roots are simple and the two branches \(Y=\pm\sqrt{P(z)}\) form a smooth compact oval.

The lifted curve equation~\eqref{eq:rank-four-example-two-surfaces} is invariant by the two first integrals.  It contains no equilibrium: in its interior \(y\ne0\).  At a turning point, write \(Y=2y/z\).  Along either open branch, differentiating \(Y^2=P(z)\) and using \(\dot z=-Yz^2\) gives \(2\dot Y=-z^2P'(z)\); continuity extends this identity to the endpoints.  Since both endpoint roots are simple, \(\dot Y\ne0\), equivalently \(\dot y\ne0\).  A nonvanishing vector field on a compact connected one-dimensional manifold is periodic.  Furthermore, no affine plane contains the orbit.  Indeed, a plane relation \(As+By+Cz+D=0\) holding on both signs of \(y\) forces \(B=0\); substituting \(s=(z+z^3)/2\) on an interval then forces \(A=C=D=0\).  Thus the affine span is three.

Finally, \(\dot z=-2yz\) and Equation~\eqref{eq:rank-four-example-quartic} give \(\dot z^{\,2}=z^4P(z)\), which yields Equation~\eqref{eq:rank-four-example-period}.  Substitution of Equation~\eqref{eq:rank-four-example-weierstrass-map} into Equation~\eqref{eq:rank-four-example-quartic} gives Equation~\eqref{eq:rank-four-example-weierstrass}; its discriminant is nonzero.
\end{proof}

Numerically,
\begin{equation}
\label{eq:rank-four-example-numerics}
 r=0.139680581996107\ldots,
 \qquad
 T=25.0139586026652\ldots.
\end{equation}
A DOP853 integration over the quadrature period returns to the initial point with Euclidean defect \(6.99\times10^{-12}\); the maximum residuals in \(I_1=1\), \(I_2=2\), and Equation~\eqref{eq:rank-four-example-resultant} are respectively \(3.21\times10^{-13}\), \(6.62\times10^{-14}\), and \(7.08\times10^{-13}\).  The orbit is visibly nonelliptic: its relative least-squares error against the best first-harmonic spatial ellipse is approximately \(0.811\).

\begin{figure}[H]
 \centering
 \includegraphics[width=0.99\linewidth]{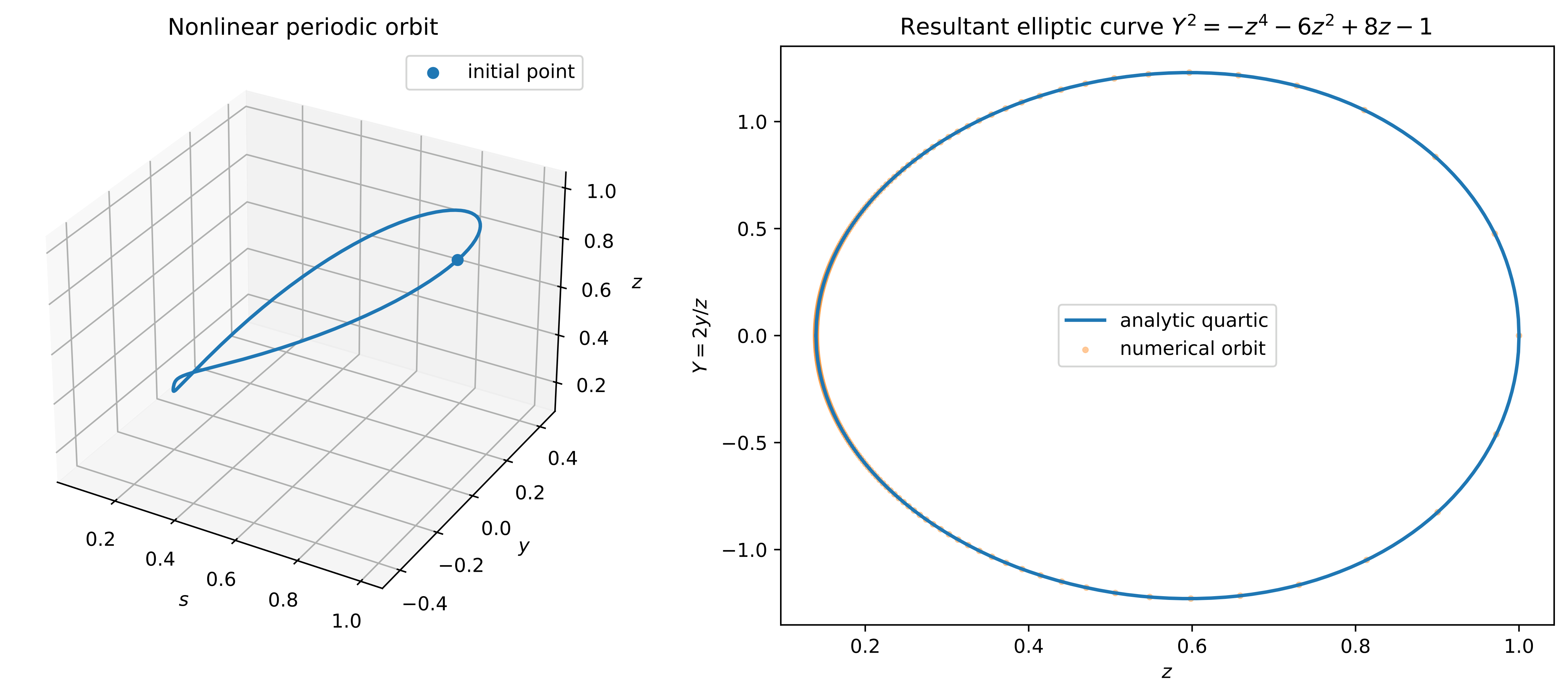}
 \caption{The rank-four periodic solution of Equation~\eqref{eq:rank-four-example-system}.  Left: the three-coordinate orbit in \((s,y,z)\), whose affine span is three.  Right: numerical samples projected onto the exact resultant quartic \(Y^2=-z^4-6z^2+8z-1\).  The numerical trajectory is used only as an illustration; periodicity and the elliptic curve follow from Theorem~\ref{thm:rank-four-example-periodic-orbit}.}
 \label{fig:rank-four-elliptic-orbit}
\end{figure}

\section{Full-dimensional elliptic and bracketing-dependent flows}
\label{sec:pentarnion-odes}

Let
\[
 p=w+xi+yj+zk+u\ell\in\Pent,
 \qquad [p]_{\mathcal B}=(w,x,y,z,u)^{\mathsf T},
 \qquad \mathcal B=(1,i,j,k,\ell),
\]
and recall that $[p,q,r]=(pq)r-p(qr)$
is the associator of $\Pent$.  The preceding section classified bracket-complete homogeneous Riccati words whose induced ODE algebras have one-generator rank four and whose trajectories are confined to invariant three-dimensional leaves.  We now turn to two complementary mechanisms that use the full five-dimensional coordinate space.  The failure of associativity restricts which dense quadratic vector fields can be represented by a small number of ordered products in the fixed algebra, and it makes different parenthesizations of the same formal monomial dynamically inequivalent.

First, we state a quadratic Pentarnion equation and solve the initial-value problem $p(0)=i$ from invariant polynomial relations.  The reduction produces an irreducible genus-one curve, rather than a projective or stereographic Riccati linearisation, and the surrounding five-dimensional field admits no fixed linear block decomposition.  We then keep the coefficients of a second equation fixed and vary only the bracketing between $B(pp)$ and $(Bp)p$.  This isolates the associator while preserving the linearisation at the origin, and makes it possible to compare the resulting recurrent dynamics.  The appendix gives a complementary solvable family in which exact compensation creates a full-span $2{:}3{:}5$ periodic orbit and an uncompensated associator destroys that orbit at the level of its initial tangent.

\subsection{An irreducible quadratic flow with an elliptic solution}
\label{subsec:genus-one-pentarnion}

For a real-linear map \(\mathcal A:\Pent\to\Pent\), write
\([\mathcal A(p)]_{\mathcal B}=A[p]_{\mathcal B}\).  Define \(\mathcal N,\mathcal M,\mathcal K\) by
\begin{equation}
\label{eq:genus-one-matrices}
N=\begin{pmatrix}
0&1&0&-1&0\\
-\frac32&0&1&0&0\\
0&0&0&0&0\\
0&0&0&0&0\\
0&0&0&0&0
\end{pmatrix},
\end{equation}
\[
M=\begin{pmatrix}
0&0&0&0&0\\
-\frac12&0&\frac12&0&0\\
0&1&0&-\frac98&-\frac98\\
-\frac32&0&\frac98&0&-\frac38\\
0&0&\frac98&\frac38&0
\end{pmatrix},
\qquad
K=\begin{pmatrix}
0&0&0&0&0\\
\frac{9\sqrt2}{8}&0&0&0&0\\
0&0&0&0&0\\
0&0&0&0&0\\
0&0&0&0&0
\end{pmatrix}.
\]
We consider
\begin{equation}
\label{eq:compact-genus-one-pentarnion}
 \dot p=k+\mathcal N(p)
 +p\bigl((\mathcal M+\mathcal K)(p)\bigr)
 +\bigl((\mathcal M-\mathcal K)(p)\bigr)p .
\end{equation}
The coefficient maps are fixed endomorphisms of the canonical vector space.  All nonlinear operations in Equation~\eqref{eq:compact-genus-one-pentarnion} are products in the original Pentarnion algebra, and the displayed order of multiplication is part of the equation.

\begin{proposition}[Canonical component system]
\label{prop:genus-one-components}
Equation~\eqref{eq:compact-genus-one-pentarnion} is equivalent to
\begin{equation}
\label{eq:dense-genus-one-flow}
\begin{aligned}
 \dot w&=x-z+wx+3wz-3xy,\\
 \dot x&=y-\frac32w-w^2+wy,\\
 \dot y&=2wx,\\
 \dot z&=1-3w^2+\frac32uw,\\
 \dot u&=3wz.
\end{aligned}
\end{equation}
In particular, every component contains a nonzero quadratic term, and the five equations are coupled through the mixed products $wx$, $wz$, $xy$, $wy$ and $uw$.
\end{proposition}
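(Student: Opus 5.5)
The plan is to verify the five component equations by splitting the right-hand side of Equation~\eqref{eq:compact-genus-one-pentarnion} into its constant, linear and quadratic parts.

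\textbf{Constant and linear parts.} These are immediate: \(k\) contributes \(1\) to \(\dot z\), and \(N[p]_{\mathcal B}\) contributes \(x-z\) to \(\dot w\) and \(y-\tfrac32 w\) to \(\dot x\).

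\textbf{Quadratic part.} Rather than expanding with the full product lemma, I will regroup it as
\[
 p\bigl((\mathcal M+\mathcal K)(p)\bigr)+\bigl((\mathcal M-\mathcal K)(p)\bigr)p
 =\bigl(p\,\mathcal M(p)+\mathcal M(p)\,p\bigr)+\bigl(p\,\mathcal K(p)-\mathcal K(p)\,p\bigr).
\]
This isolates a symmetric Jordan part and an antisymmetric commutator part, and each can be handled with formulas already established in Section~\ref{sec:Pentarnion}.

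\textbf{Jordan part.} The first row of \(M\) vanishes, so \(\mathcal M(p)=(0,v)\) is pure, with
\[
 v=\Bigl(\tfrac{y-w}{2},\ x-\tfrac98 z-\tfrac98 u,\ -\tfrac32 w+\tfrac98 y-\tfrac38 u,\ \tfrac98 y+\tfrac38 z\Bigr)
\]
in the \(i,j,k,\ell\) coordinates. By the Jordan formula \((a,u)\circ(b,v)=(ab-\langle u,v\rangle,av+bu)\), with pure part \(\pi=(x,y,z,u)\) of \(p\), this gives
\[
 p\,\mathcal M(p)+\mathcal M(p)\,p=\bigl(-2\langle \pi,v\rangle,\ 2wv\bigr).
\]
The scalar term \(-2\langle\pi,v\rangle\) should collapse, after the \(\tfrac98\) and \(\tfrac38\) cross terms cancel, to \(wx+3wz-3xy\). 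The vector term \(2wv\) supplies \(wy-w^2\) in \(\dot x\), \(2wx\) in \(\dot y\), \(-3w^2\) in \(\dot z\), and further pieces in \(\dot y,\dot z,\dot u\).

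\textbf{Commutator part.} Here \(\mathcal K(p)=c\,w\,i\) with \(c=\tfrac{9\sqrt2}{8}\). Using \(uv-vu=2\Pi(u\wedge v)\), the commutator part equals \(2cw\,\Pi(\pi\wedge i)\), which expands through \(\Pi(j\wedge i)=-\tfrac{k+\ell}{\sqrt2}\), \(\Pi(k\wedge i)=\tfrac{j+\ell}{\sqrt2}\) and \(\Pi(\ell\wedge i)=\tfrac{j+k}{\sqrt2}\). The factor \(\sqrt2\) in \(c\) cancels the \(1/\sqrt2\), so every coefficient is rational.

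\textbf{Main obstacle.} The delicate step is checking that the \(\tfrac94\) terms from \(\mathcal K\) exactly cancel the \(\tfrac94\) terms from \(2wv\) in the \(j,k,\ell\) components. This must leave \(\dot y=2wx\), \(\dot z=1-3w^2+\tfrac32 uw\) and \(\dot u=3wz\); for instance the \(wz\) terms \(-\tfrac94+\tfrac94\) in \(\dot y\) and \(\tfrac34+\tfrac94\) in \(\dot u\). I will tabulate the coefficients per monomial to confirm this.

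\textbf{Final remark.} The closing assertion of the proposition, that every component has a nonzero quadratic term coupled through the listed mixed products, can then be read off directly from the resulting system.
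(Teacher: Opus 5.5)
Your proposal is correct, and the bookkeeping closes exactly as you anticipate. In $\langle\pi,v\rangle$ the $yz$, $yu$ and $zu$ cross terms cancel, so $-2\langle\pi,v\rangle=wx+3wz-3xy$. The commutator part equals $\tfrac{9}{4}\,w\bigl[(z+u)j+(u-y)k+(z-y)\ell\bigr]$. This removes the $\mp\tfrac{9}{4}$ terms of $2wv$ and leaves $2wx$, $-3w^2+\tfrac{3}{2}uw$ and $3wz$ in the $j$, $k$, $\ell$ slots.

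The paper does the same verification in a uniform way. It substitutes the matrices and expands both ordered products directly with the canonical multiplication table. It then records the resulting quadratic vector together with the affine part $(x-z,\,y-\tfrac{3}{2}w,\,0,\,1,\,0)^{\mathsf T}$.

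Your regrouping into $2\,p\circ\mathcal M(p)$ plus $p\,\mathcal K(p)-\mathcal K(p)\,p$ is a more economical organisation. Since $\mathcal M(p)$ is pure, the $\mathcal M$-part needs only the spin-factor Jordan product, with no use of $\Pi$ at all. Since $\mathcal K(p)\in\R i$, all dependence on $\Pi$ is confined to the three products $ji$, $ki$, $\ell i$. Your split also makes the design of the equation visible: the scalar component comes entirely from the Jordan part, and $\mathcal K$ is there precisely to cancel the $\tfrac{9}{4}$ terms. The paper's brute-force expansion is less illuminating, but it is the form most easily handed to a mechanical symbolic check.
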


\begin{proof}
Substitution of the matrices in Equation~\eqref{eq:genus-one-matrices} into Equation~\eqref{eq:compact-genus-one-pentarnion}, followed by expansion with the canonical multiplication table, gives
\[
[p(\mathcal M+\mathcal K)(p)+(\mathcal M-\mathcal K)(p)p]_{\mathcal B}
=
\begin{pmatrix}
wx+3wz-3xy\\
-w^2+wy\\
2wx\\
-3w^2+\frac32uw\\
3wz
\end{pmatrix},
\]
while \([k+\mathcal N(p)]_{\mathcal B}=(x-z,\,y-\frac32w,\,0,\,1,\,0)^{\mathsf T}\).  Adding the two contributions yields Equation~\eqref{eq:dense-genus-one-flow}.
\end{proof}

\subsubsection{Invariant ideal and integration of the initial-value problem}

We solve Equation~\eqref{eq:compact-genus-one-pentarnion} with the canonical initial value
\begin{equation}
\label{eq:genus-one-p0}
 p(0)=i.
\end{equation}
The component equations do not display an immediate scalar reduction.  Integrability instead follows from four polynomial relations that are preserved by the vector field.

\begin{theorem}[Invariant prime genus-one ideal]
\label{thm:invariant-genus-one-ideal}
Define
\begin{equation}
\label{eq:genus-one-generators}
\begin{aligned}
 G_1&=y-w^2, & G_2&=z-wx,\\
 G_3&=u-wy, &
 G_4&=x^2-\frac12y^2+\frac32y-1.
\end{aligned}
\end{equation}
For the vector field \(F\) in equation~\eqref{eq:dense-genus-one-flow},
\begin{equation}
\label{eq:ideal-lie-identities}
\begin{aligned}
 \mathcal L_FG_1
 &=6wxG_1+2w(1-3w)G_2,\\
 \mathcal L_FG_2
 &=\left(3x^2-w-\frac12y+\frac32\right)G_1
   +x(1-3w)G_2+\frac32wG_3-G_4,\\
 \mathcal L_FG_3
 &=x(3y-1)G_1+(y+3w-3wy)G_2,\\
 \mathcal L_FG_4
 &=2xG_1.
\end{aligned}
\end{equation}
Consequently,
\[
I=\langle G_1,G_2,G_3,G_4\rangle
\]
is an invariant prime ideal.  The complexification of \(\mathcal E=V(I)\subset\R^5\) is an irreducible affine curve, and its nonsingular projective completion has genus one.
\end{theorem}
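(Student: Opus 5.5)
The proof splits into three parts: verify the four Lie-derivative identities, show that $I$ is prime with an irreducible one-dimensional zero set, and compute the genus.

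\textbf{Step 1: the Lie-derivative identities.} For each $G_r$, I compute $\mathcal L_FG_r=\nabla G_r\cdot F$ directly from Equation~\eqref{eq:dense-genus-one-flow}. For example,
\[
\mathcal L_FG_1=2wx-2w\dot w
=2wx-2w\bigl(x-z+wx+3wz-3xy\bigr).
\]
I then check that the difference between this and the claimed combination of the $G_r$ is the zero polynomial. The same holds for $G_2,G_3,G_4$; it is a finite polynomial identity check, best done by symbolic expansion. Because each $\mathcal L_FG_r$ lies in $I$, the ideal is $F$-invariant. Hence its zero set is invariant, once we know $I$ is radical, which follows from primeness.

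\textbf{Step 2: primeness.} The generators $G_1,G_2,G_3$ let us eliminate
\[
y=w^2,\qquad z=wx,\qquad u=wy=w^3 .
\]
This gives a ring isomorphism
\[
\R[w,x,y,z,u]/I\;\cong\;\R[w,x]/\bigl(x^2-\tfrac12w^4+\tfrac32w^2-1\bigr).
\]
So $I$ is prime, and remains prime after complexification, exactly when $g(w,x)=x^2-f(w)$ is irreducible over $\C$, where
\[
f(w)=\tfrac12w^4-\tfrac32w^2+1=\tfrac12(w^2-1)(w^2-2).
\]
Since $f$ is not a square in $\C[w]$ (its roots $\pm1,\pm\sqrt2$ are simple), $x^2-f$ is irreducible. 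The complex zero set is therefore an irreducible affine plane curve, embedded in $\C^5$ as a graph. The curve is real and nonempty: the initial point $p(0)=i$, i.e. $(w,x,y,z,u)=(0,1,0,0,0)$, satisfies all four relations.

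\textbf{Step 3: the genus.} The affine curve $x^2=f(w)$ with $f$ squarefree of degree $4$ is smooth. Any of its singular points would have $x=0$ and $f(w)=f'(w)=0$, which is impossible for simple roots. Its smooth projective model is the standard hyperelliptic model of a curve $x^2=f(w)$ with $\deg f=4$ and four distinct roots, which has genus $\lfloor(4-1)/2\rfloor=1$.

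For a more explicit argument, I would send the root $w=1$ to infinity and reduce to a Weierstrass cubic with nonzero discriminant, in the same way as in the proof of Theorem~\ref{thm:rank-four-example-periodic-orbit}. The nonsingular completion of $V(I)$ is birational to this curve, so it has genus one.

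\textbf{Main obstacle.} I expect Step 1 to be the main obstacle, because the second identity involves $G_4$ with a coefficient $-1$ and must be verified carefully. Steps 2 and 3 are routine once the elimination is seen.
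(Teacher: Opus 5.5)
Your proposal is correct and follows essentially the same route as the paper. The four Lie-derivative identities do hold on direct expansion and give $\mathcal L_F(I)\subset I$; elimination by $G_1,G_2,G_3$ reduces to $x^2=\tfrac12(w^2-1)(w^2-2)$, whose simple roots give primeness over $\C$; and your hyperelliptic genus count is the paper's Riemann--Hurwitz computation for a double cover of $\mathbb P^1$ branched at four points. One small correction: invariance of $V(I)$ needs only $\mathcal L_F(I)\subset I$, not radicality, because along any trajectory the vector $(G_1,\dots,G_4)$ satisfies a linear homogeneous ODE with zero initial data.
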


\begin{proof}
The identities in Equation~\eqref{eq:ideal-lie-identities} give \(\mathcal L_F(I)\subset I\), so \(V(I)\) is invariant.  The first three generators eliminate
\[
y=w^2,\qquad z=wx,\qquad u=wy=w^3.
\]
The remaining relation is
\begin{equation}
\label{eq:jacobi-quartic}
 x^2=(1-w^2)\left(1-\frac12w^2\right).
\end{equation}
The polynomial on the right has the four distinct roots \(\pm1,\pm\sqrt2\) and is not a square in \(\C[w]\).  Hence
\[
\C[w,x,y,z,u]/I
\simeq
\C[w,x]/\left\langle x^2-(1-w^2)(1-\tfrac12w^2)\right\rangle
\]
is a domain.  The smooth projective model is a double cover of \(\mathbb P^1\) branched at four points, and the Riemann--Hurwitz formula gives genus one.
\end{proof}

\begin{corollary}[Exact Jacobi-elliptic Pentarnion solution]
\label{cor:jacobi-elliptic-orbit}
The initial-value problem equations~\eqref{eq:compact-genus-one-pentarnion} and~\eqref{eq:genus-one-p0} have the globally regular periodic solution 
\begin{equation}
\label{eq:jacobi-pentarnion-orbit}
 p_*(t)=s(t)+v(t)i+s(t)^2j+s(t)v(t)k+s(t)^3\ell,
\end{equation}
where
\[
 s(t)=\operatorname{sn}\!\left(t\mid\frac12\right),
 \qquad
 v(t)=\operatorname{cn}\!\left(t\mid\frac12\right)
       \operatorname{dn}\!\left(t\mid\frac12\right).
\]
Its minimal period is
\begin{equation}
T=4K\!\left(\frac12\right).
\label{eq:genus-one-period}
\end{equation}
\end{corollary}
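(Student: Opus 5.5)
Since $p(0)=i$ corresponds to $(w,x,y,z,u)=(0,1,0,0,0)$, all four generators in Equation~\eqref{eq:genus-one-generators} vanish there: $G_1=G_2=G_3=0$ trivially, and $G_4=1-0+0-1=0$. So the initial point lies on $\mathcal E=V(I)$. By Theorem~\ref{thm:invariant-genus-one-ideal} the ideal is invariant, so $\mathcal E$ is invariant under the flow, and uniqueness for the polynomial field~\eqref{eq:dense-genus-one-flow} confines the solution to $\mathcal E$. I will nevertheless check the proposed formula directly rather than argue abstractly.

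On $\mathcal E$ we have $y=w^2$, $z=wx$, $u=w^3$. Substituting these into the $\dot w$ equation gives
\[
\dot w=x-wx+wx+3w^2x-3w^2x=x .
\]
The $\dot x$ equation becomes
\[
\dot x=w^2-\tfrac32w-w^2+w^3=w^3-\tfrac32 w .
\]
Thus the reduced planar system is $\dot w=x$, $\dot x=w^3-\tfrac32w$, with $(w,x)(0)=(0,1)$. This is exactly the second-order equation $\ddot s=2m s^3-(1+m)s$ at $m=\tfrac12$, whose solution with $s(0)=0$, $\dot s(0)=1$ is $s=\operatorname{sn}(t\mid\tfrac12)$. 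The classical identities then give:
\begin{itemize}
\item[] $\dot s=\operatorname{cn}\operatorname{dn}=v$;
\item[] $\dot v=-s\,\operatorname{dn}^2-m\,s\,\operatorname{cn}^2=2ms^3-(1+m)s$;
\item[] $v^2=(1-s^2)(1-\tfrac12 s^2)$, which matches~\eqref{eq:jacobi-quartic}.
\end{itemize}

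For rigour I will verify directly that $p_*$ satisfies all five equations~\eqref{eq:dense-genus-one-flow}. The $w$ and $x$ equations were done above. For the rest:
\begin{itemize}
\item[] $\dot y=2s\dot s=2sv=2wx$;
\item[] $\dot u=3s^2v=3wz$;
\item[] $\dot z=\dot s\,v+s\dot v=v^2+s(s^3-\tfrac32 s)$, which should equal $1-3s^2+\tfrac32 s^4$.
\end{itemize}
The last check uses $v^2=1-\tfrac32s^2+\tfrac12s^4$, and the two sides agree. The initial value is $p_*(0)=i$ since $\operatorname{cn}(0)=\operatorname{dn}(0)=1$. Uniqueness then identifies $p_*$ with the solution.

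Global regularity follows because $\operatorname{sn}$, $\operatorname{cn}$, $\operatorname{dn}$ are entire in real $t$ and bounded for $0<m<1$, so the solution exists for all $t$.

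The step needing the most care is the minimal period. Each of $\operatorname{sn}$, $\operatorname{cn}$ and $\operatorname{dn}$ has real period dividing $4K$, so $p_*$ is $4K$-periodic. For minimality I take the $w$-coordinate $s=\operatorname{sn}(t\mid\tfrac12)$ alone. It has minimal period exactly $4K$, because $\operatorname{sn}(t+2K)=-\operatorname{sn}(t)$ and on $[0,4K)$ it vanishes only at $0$ and $2K$ with opposite derivatives. Hence any period of $p_*$ is a multiple of $4K$, and $T=4K(\tfrac12)$.

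Equivalently, the orbit traces the compact oval $x^2=(1-w^2)(1-\tfrac12 w^2)$, $|w|\le1$, once. That oval contains no equilibria, since $\dot w=x$ and $\dot x=w(w^2-\tfrac32)\neq0$ at $x=0$, $w=\pm1$. So the period equals $4\int_0^1 dw/\sqrt{(1-w^2)(1-w^2/2)}=4K(\tfrac12)$.
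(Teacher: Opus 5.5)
Your proof is correct and follows the paper's route: you place $p(0)=i$ on $\mathcal E$, reduce to $\dot w=x$, $\dot x=w^3-\tfrac32 w$, identify $w=\operatorname{sn}(t\mid\tfrac12)$, read off the remaining components from $G_1=G_2=G_3=0$, and obtain the minimal period $4K(\tfrac12)$ from the minimal period of $\operatorname{sn}$. Your direct verification of all five component equations, which makes the argument independent of the invariance theorem, and your explicit proof that $\operatorname{sn}$ has minimal period exactly $4K$ only add detail. One wording fix: $\operatorname{sn},\operatorname{cn},\operatorname{dn}$ are real-analytic and bounded on $\R$ for $0<m<1$, not entire.
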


\begin{proof}
The initial point lies on \(\mathcal E\).  Restricting Equation~\eqref{eq:dense-genus-one-flow} to \(\mathcal E\) gives
\[
 \dot w=x,
 \qquad
 \dot x=w^3-\frac32w,
\]
and Equation~\eqref{eq:jacobi-quartic} gives
\[
 \dot w^{\,2}=(1-w^2)\left(1-\frac12w^2\right).
\]
The initial values \(w(0)=0\) and \(\dot w(0)=1\) select
\(w(t)=\operatorname{sn}(t\mid1/2)\).  The standard derivative identity for \(\operatorname{sn}\) gives \(x=v\), and \(G_1=G_2=G_3=0\) yields the remaining components.  The real function \(\operatorname{sn}(t\mid m)\) has minimal period \(4K(m)\) for \(0<m<1\).
\end{proof}

\begin{proposition}[Full affine span and elliptic normal sextic]
\label{prop:elliptic-normal-sextic}
The image of Equation~\eqref{eq:jacobi-pentarnion-orbit} has affine span equal to \(\Pent\).  Its projective closure under
\[
 [X_0:X_1:X_2:X_3:X_4:X_5]
 =[1:w:x:w^2:wx:w^3]
\]
is an elliptic normal curve of degree six in \(\mathbb P^5\).  Its homogeneous ideal is generated by nine quadrics.
\end{proposition}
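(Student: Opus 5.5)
The plan is to read everything off the parametrisation \(w=s\), \(x=v\), \(y=w^2\), \(z=wx\), \(u=w^3\) from Theorem~\ref{thm:invariant-genus-one-ideal} and Corollary~\ref{cor:jacobi-elliptic-orbit}, and then identify the map to \(\mathbb P^5\) with a complete linear system on the genus-one curve.

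\textbf{Affine span.} Suppose a linear relation
\[
c_0+c_1w+c_2x+c_3w^2+c_4wx+c_5w^3=0
\]
holds along the orbit. The orbit is invariant under \(x\mapsto -x\), since the closed oval contains both branches \(x=\pm\sqrt{(1-w^2)(1-\frac12 w^2)}\) over each \(w\in(-1,1)\). Taking the odd part in \(x\) gives \(x(c_2+c_4w)=0\), hence \(c_2=c_4=0\). The even part is then a cubic in \(w\) that vanishes on the interval \([-1,1]\), so all remaining \(c_i\) vanish. Therefore \(1,w,x,w^2,wx,w^3\) are linearly independent functions on the orbit, and the affine span is all of \(\Pent\).

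\textbf{Degree and normality.} Let \(E\) be the smooth projective model of \(x^2=(1-w^2)(1-\frac12w^2)\). Because the quartic has distinct roots, \(E\) has two points \(\infty_\pm\) over \(w=\infty\), and \(w\) and \(x\) have pole divisors \(\infty_++\infty_-\) and \(2\infty_++2\infty_-\) respectively. The six functions \(1,w,x,w^2,wx,w^3\) therefore lie in \(L(D)\) with \(D=3\infty_++3\infty_-\), a divisor of degree \(6\). By Riemann--Roch, \(\ell(D)=6\) on a genus-one curve. Since the six functions are independent by the previous step, they form a basis of \(L(D)\), and the map is the complete linear system \(|D|\). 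A divisor of degree \(\ge3\) on an elliptic curve is very ample, so \(|D|\) embeds \(E\) as a curve of degree \(\deg D=6\) whose image is linearly normal. This is by definition an elliptic normal curve of degree six in \(\mathbb P^5\). Finally, the projective closure of the affine orbit's Zariski closure is this image, because the affine curve \(V(I)\) is irreducible and its closure adds exactly the points \(\infty_\pm\).

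\textbf{Nine quadrics.} By the classical theorem of Fisher and Hulek, an elliptic normal curve of degree \(n\ge4\) is projectively normal and its ideal is generated by quadrics. The number of quadrics is \(\dim S_2-h^0(2D)=21-12=9\). To make the statement concrete, I would exhibit the nine quadrics:
\[
X_0X_3-X_1^2,\quad X_0X_4-X_1X_2,\quad X_0X_5-X_1X_3,\quad X_1X_5-X_3^2,\quad X_1X_4-X_2X_3,\quad X_3X_4-X_2X_5,
\]
together with three quadrics obtained by homogenising \(G_4\) multiplied by \(1\), \(w\) and \(w^2\). These are
\[
X_2^2-(X_0^2-\tfrac32X_0X_3+\tfrac12X_3^2),
\]
and analogous expressions for \(X_2X_4\) and \(X_4^2\), each rewritten using degree-two monomials. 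Their independence follows by checking leading monomials.

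The main obstacle is the rigorous citation of the quadric-generation theorem. Counting nine independent quadrics in the ideal only gives containment; equality requires projective normality, which is supplied by the cited theorem and not by direct computation.
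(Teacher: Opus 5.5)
Your proposal is correct and follows the paper's proof step for step: the $x\mapsto -x$ symmetry of the oval gives the affine span, Riemann--Roch shows that $1,w,x,w^2,wx,w^3$ is a basis of $L(3P_++3P_-)$, and a citation supplies the $N(N-3)/2=9$ quadric generators. Your count $21-12=9$ and your explicit list are correct additions that the paper does not write out; the list consists of the six binomial $2\times 2$ minors together with $G_4$ homogenised after multiplying by $1,w,w^2$. The only slip is cosmetic: the two points at infinity arise because the quartic has even degree with nonzero leading coefficient, not because its roots are distinct.
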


\begin{proof}
An affine relation on the orbit has the form
\[
 a_0+a_1w+a_3w^2+a_5w^3+x(a_2+a_4w)=0.
\]
For every \(w\in(-1,1)\), the compact real oval contains both signs of \(x\).  Subtracting the two relations gives \(a_2+a_4w=0\) on an interval, hence \(a_2=a_4=0\).  The remaining cubic then vanishes on an interval, so all coefficients are zero.

Let \(P_+,P_-\) be the two points at infinity of the smooth projective quartic.  The functions \(w\) and \(x\) have pole orders one and two, respectively, at each point.  Thus
\[
1,w,x,w^2,wx,w^3\in L(3P_++3P_-).
\]
They are linearly independent, and Riemann--Roch gives
\(\dim L(3P_++3P_-)=6\).  Hence they form the complete degree-six linear system and define an elliptic normal sextic in \(\mathbb P^5\).  The ideal of an elliptic normal curve of degree \(N\ge4\) is generated by \(N(N-3)/2\) quadrics; for \(N=6\), this number is nine \cite{KanekoKuwata2024,Shatsila2022}.
\end{proof}

\begin{figure}[H]
\centering
\includegraphics[width=.98\linewidth]{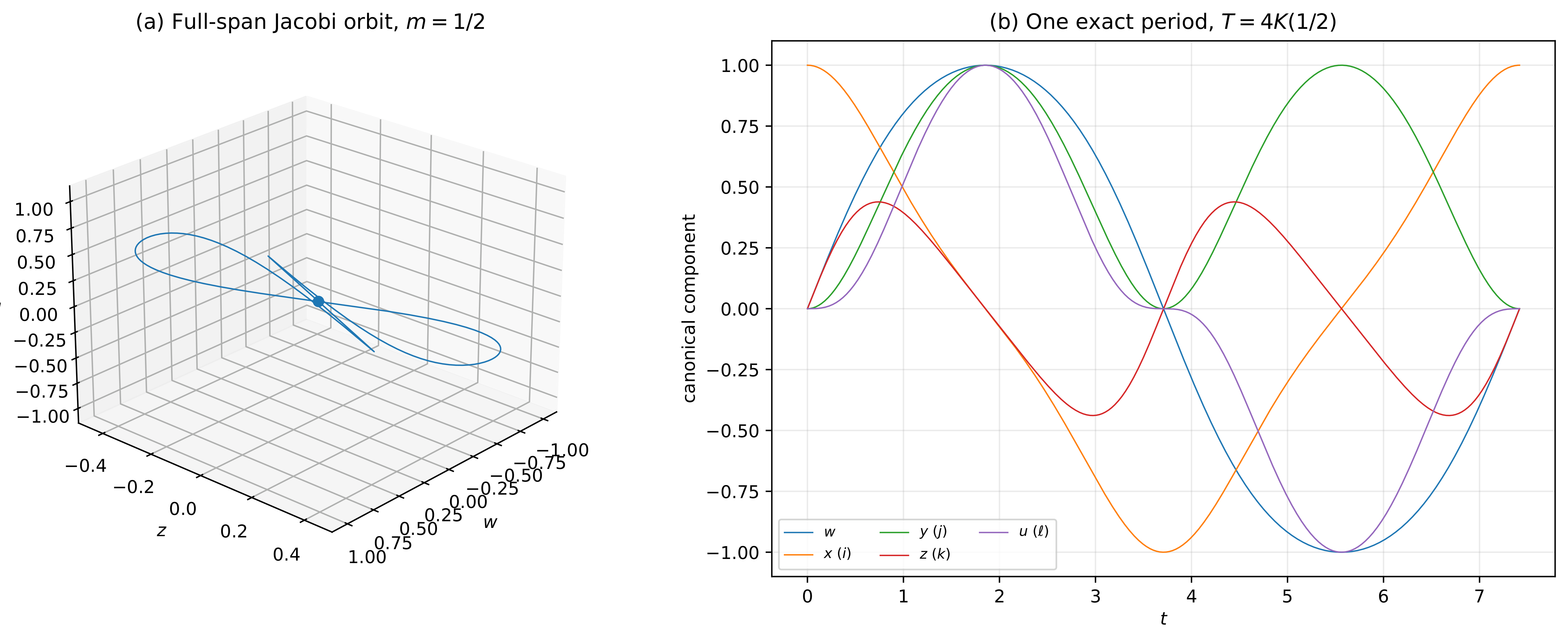}
\caption{The exact genus-one solution of Equation~\eqref{eq:compact-genus-one-pentarnion}.  Left: its projection onto the canonical \((w,z,u)\) coordinates.  Right: the five canonical components over one period \(4K(1/2)\).}
\label{fig:genus-one-exact}
\end{figure}

\subsubsection{Irreducibility and modulus selection}

The one-dimensional invariant curve does not arise from a fixed linear splitting of the surrounding five-dimensional vector field.

\begin{theorem}[Linear irreducibility of the ambient flow]
\label{thm:linear-irreducibility}
Write
\[
 DF(p)=J_0+wJ_w+xJ_x+yJ_y+zJ_z+uJ_u.
\]
The associative matrix algebra generated by \(J_0,J_w,J_x,J_y,J_z,J_u\) is \(M_5(\Q)\).  Consequently, their Jacobian family has no nonzero proper common invariant subspace, and no fixed linear coordinate transformation decomposes or simultaneously block-triangularizes the ambient five-dimensional flow.
\end{theorem}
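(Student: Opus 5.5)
The plan is to combine Burnside's theorem with an explicit rank-one element, so that very little matrix computation is needed. Since the vector field $F$ of Proposition~\ref{prop:genus-one-components} is quadratic, $DF(p)$ is affine in $(w,x,y,z,u)$. I will read off $J_0$ as the Jacobian at the origin and each $J_\bullet$ as the partial derivative of $DF$ with respect to that coordinate. All entries are rational, so the generated associative algebra $\mathcal A\subseteq M_5(\Q)$ is well defined.

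\textbf{Step 1: a matrix unit in $\mathcal A$.} The only monomial of \eqref{eq:dense-genus-one-flow} containing $u$ is $\tfrac32uw$ in $\dot z$. Hence
\[
J_u=\tfrac32E_{zw},
\]
where $E_{zw}$ is the matrix unit sending $e_w$ to $e_z$. So $E_{zw}\in\mathcal A$.

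\textbf{Step 2: cyclicity on both sides.} I will check two things.
\begin{itemize}
\item The left orbit $\mathcal A e_z$ is all of $\Q^5$. Start from $e_z$ and apply the columns of $J_0,J_w,J_x,\dots$ repeatedly until a basis is obtained. For example, $J_0e_z$ has a $-1$ in the $w$-slot from $\dot w=x-z+\cdots$, and $J_we_z$ picks up the $3w z$-type terms.
\item The right orbit $e_w^{\mathsf T}\mathcal A$ is the full space of row vectors. Start from $e_w^{\mathsf T}$ and multiply on the right by the generators; this amounts to reading off rows of their products.
\end{itemize}
Both are finite rank checks of at most five vectors. I expect a short explicit chain of products to suffice in each case, for instance using $J_0$ (which contains $\dot x\ni y$ and $\dot w\ni x$) together with $J_w$ and $J_x$.

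\textbf{Step 3: all matrix units.} Once Step 2 holds, for any $a\in\Q^5$ and row vector $b^{\mathsf T}$ there are $X,Y\in\mathcal A$ with $Xe_z=a$ and $e_w^{\mathsf T}Y=b^{\mathsf T}$. Then
\[
X E_{zw} Y = a\,b^{\mathsf T}\in\mathcal A.
\]
So $\mathcal A$ contains every rank-one matrix, and therefore $\mathcal A=M_5(\Q)$. This avoids computing the commutant directly.

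\textbf{Step 4: the consequences.} Suppose $W$ were a nonzero proper subspace invariant under all $J_\bullet$. Then $W$ would be invariant under $\mathcal A=M_5(\Q)$, which acts irreducibly on $\Q^5$; the same holds after tensoring with $\R$ or $\C$, since $M_5$ stays the full matrix algebra. A fixed linear change of coordinates that decomposes, or simultaneously block-triangularizes, the flow would make every $DF(p)$ block upper triangular. Hence every coefficient matrix $J_\bullet$ would preserve the first coordinate block, which is a contradiction.

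\textbf{Main obstacle.} The only real work is Step 2, the two cyclicity verifications. If $e_z$ turns out not to be cyclic, I would replace it by $e_w$ and instead use $E_{zw}$ followed by right multiplication to reach the needed rows. Failing that, I would use another sparse coefficient matrix, such as $J_z$ (entries from $3wz$ in $\dot w$ and $\dot u$) or $J_y$, to produce further matrix units.
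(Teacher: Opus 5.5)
Your proposal is correct, and it reaches the result by a different route from the paper. The paper proves $\mathcal{A}=M_5(\Q)$ by listing $25$ explicit words in $I,J_0,\dots,J_u$, vectorizing them, and reporting a nonzero $25\times25$ determinant. That is a mechanical certificate, but it gives no structural reason. You instead note that $J_u=\tfrac32E_{zw}$ is already rank one, and you reduce fullness to two cyclicity checks.

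Both checks succeed using products of at most two generators, so your fallback plans are not needed. Reading the coefficient matrices off \eqref{eq:dense-genus-one-flow}:
\begin{itemize}
\item For the left orbit, $J_0e_z=-e_w$, $(J_w+3J_0)e_z=3e_u$, $J_ye_w=e_x$, $J_xe_w=e_w+2e_y$ and $J_ue_w=\tfrac32e_z$. Hence $\mathcal{A}e_z=\Q^5$.
\item For the right orbit, $e_w^{\mathsf T}J_z=3e_w^{\mathsf T}$, $e_w^{\mathsf T}J_y=-3e_x^{\mathsf T}$, $e_w^{\mathsf T}(J_w-J_0)=4e_z^{\mathsf T}$, $e_w^{\mathsf T}(3J_x-J_z)=-9e_y^{\mathsf T}$ and $e_z^{\mathsf T}J_w=-6e_w^{\mathsf T}+\tfrac32e_u^{\mathsf T}$. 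Hence $e_w^{\mathsf T}\mathcal{A}$ is the whole row space.
\end{itemize}
None of these uses the identity matrix. So even the non-unital algebra generated by the six $J$'s contains every $ab^{\mathsf T}$, and your Step 3 goes through. Your argument is also self-contained; Burnside's theorem is never actually needed.

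What your route buys is a short, hand-checkable proof that shows where the irreducibility comes from. The $uw$ term in $\dot z$ supplies a matrix unit, and the remaining couplings make $e_z$ and $e_w^{\mathsf T}$ cyclic. You also make explicit two points the paper leaves implicit:
\begin{itemize}
\item the passage from $\Q$ to $\R$, needed for the statement about the real flow;
\item the reduction from block-triangularizing $DF(p)$ for all $p$ to block-triangularizing each coefficient matrix.
\end{itemize}
The paper's determinant, by contrast, needs no insight into the sparsity pattern and would apply unchanged to a denser Jacobian family with no rank-one generator.
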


\begin{proof}
Consider the 25 matrix words
\begin{align*}
\mathcal W=\{&I,J_0,J_w,J_x,J_y,J_z,J_u,
J_0^2,J_0J_w,J_0J_x,J_0J_y,J_0J_z,J_0J_u,\\
&J_wJ_0,J_w^2,J_wJ_x,J_wJ_y,
J_xJ_0,J_xJ_w,J_x^2,
J_zJ_0,J_zJ_w,J_zJ_x,J_0^2J_w,J_xJ_0J_w\}.
\end{align*}
After vectorizing the matrices in this order, the resulting \(25\times25\) matrix has determinant
\[
2541865828329=3^{26}\neq0.
\]
Thus \(\mathcal W\) spans \(M_5(\Q)\).  Any common invariant subspace for the Jacobian coefficients would then be invariant under every matrix in \(M_5(\Q)\), which is impossible except for \(\{0\}\) and \(\Q^5\).
\end{proof}

\begin{proposition}[Exclusion of projective/conformal Riccati forms]
\label{prop:not-riccati}
The homogeneous quadratic part of Equation~\eqref{eq:dense-genus-one-flow} is not linearly equivalent to a projective Riccati field or to a conformal Riccati field associated with any nondegenerate real symmetric bilinear form.
\end{proposition}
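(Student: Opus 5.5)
The plan is to exhibit an invariant of linear equivalence that every projective or conformal Riccati field has and that the homogeneous quadratic part of \eqref{eq:dense-genus-one-flow} lacks. Write
\[
Q(w,x,y,z,u)=\Bigl(wx+3wz-3xy,\;-w^2+wy,\;2wx,\;-3w^2+\tfrac32uw,\;3wz\Bigr)^{\mathsf T}
\]
for this quadratic part. Let \(\star\) be its polarisation as in \eqref{eq:polarised-product}, and set \(p^{\star2}=Q(p)\) and \(p^{\star3}=p\star Q(p)\). I take the standard normal forms:
\begin{itemize}
\item a projective Riccati field is \(Q(p)=\lambda(p)\,p\) for a linear form \(\lambda\);
\item a conformal Riccati field for a nondegenerate symmetric bilinear form \(\beta\) is \(Q(p)=2\beta(a,p)\,p-\beta(p,p)\,a\) for a fixed vector \(a\), possibly with an added projective term \(\lambda(p)p\).
\end{itemize}
Under a linear change \(p=Sq\), the field \(Q\) becomes \(S^{-1}Q(Sq)\), the polarised product is transported by \(S\), and so one-generated \(\star\)-subalgebras correspond under \(S\). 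Hence the one-generator rank of Section~\ref{subsec:rank-jump} is a linear invariant.

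Both model classes have small one-generator rank. For a projective field, \(p^{\star2}\in\R p\), and polarising gives \(p\star y=\tfrac12(\lambda(p)y+\lambda(y)p)\). It follows that \(p\star p^{\star2}\in\R p\), so every one-generated subalgebra has dimension at most one. For a conformal field (with or without the projective term), polarisation gives
\[
x\star y=\beta(a,x)y+\beta(a,y)x-\beta(x,y)a+\tfrac12(\lambda(x)y+\lambda(y)x).
\]
This maps \(\operatorname{span}\{p,a\}\) into itself, so every one-generated subalgebra lies in \(\operatorname{span}\{p,a\}\) and has dimension at most two. This is the same argument as in Proposition~\ref{prop:compact-GH-identities}. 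Thus both classes have one-generator rank at most three, and in both cases \(p\wedge p^{\star2}\wedge p^{\star3}\equiv0\).

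It therefore suffices to find a single point \(p\) at which \(p\), \(Q(p)\) and \(p\star Q(p)\) are linearly independent in \(\R^5\). I would compute the polarisation directly from the coordinates:
\[
x\star y=\tfrac12\bigl(Q(x+y)-Q(x)-Q(y)\bigr).
\]
For example, its first component is \(\tfrac12(w_1x_2+w_2x_1)+\tfrac32(w_1z_2+w_2z_1)-\tfrac32(x_1y_2+x_2y_1)\). I would then evaluate at a simple rational point such as \(p=(1,1,0,0,0)\), where \(Q(p)=(1,-1,2,-3,0)\), compute \(p\star Q(p)\), and check that some \(3\times3\) minor of the \(5\times3\) matrix \([p,Q(p),p\star Q(p)]\) is nonzero. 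If that point happens to be degenerate, I would perturb to \((1,1,1,0,0)\) or a similar point. A nonzero minor is an open condition, so one point suffices.

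As a cheap preliminary check, the projective case is already excluded at this point, because \(Q(p)\) is not parallel to \(p\). The only real work is the explicit minor computation, which is routine but must be done carefully. The conceptual point to state explicitly is that the conformal class also contains the degenerate case \(a=0\) and the mixed projective-plus-conformal forms, and the span argument covers all of them uniformly.
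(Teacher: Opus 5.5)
Your proof is correct, but it takes a different route from the paper's. The paper excludes the projective case by non-collinearity, as in your preliminary check, but at the point $(1,0,0,0,0)$, where $Q=(0,-1,0,-3,0)^{\mathsf T}$. For the conformal case it imposes the infinitesimal condition $DQ(v)^{\mathsf T}G+G\,DQ(v)=\tfrac25\operatorname{tr}(DQ(v))\,G$ for every $v$. It treats the $15$ entries of $G$ as unknowns and asserts that the resulting linear system has rank $15$, so $G=0$.

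You instead use the one-generator rank of the polarised algebra as a linear invariant. Both model classes confine $p$, $Q(p)$ and $p\star Q(p)$ to $\operatorname{span}\{p,a\}$, so one point where these three vectors are independent suffices. Your suggested point works. At $p=(1,1,0,0,0)$ one gets $Q(p)=(1,-1,2,-3,0)$ and $p\star Q(p)=(-\tfrac{15}{2},0,0,-3,-\tfrac92)$. The $3\times3$ minor of $[p,Q(p),p\star Q(p)]$ in the $x,y,u$ rows equals $-9\neq0$, so no perturbation is needed.

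Your route buys several things:
\begin{itemize}
\item a single hand-checkable determinant replaces a $15$-unknown rank computation;
\item projective, conformal and mixed projective-plus-conformal quadratic parts are treated uniformly;
\item nondegeneracy of $\beta$ is never used, so you prove a slightly stronger exclusion;
\item the result ties directly to the rank-jump language of Section~\ref{subsec:rank-jump}: the quadratic part has one-generated subalgebras of dimension at least three on a nonempty Zariski-open set.
\end{itemize}

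The paper's route tests the defining infinitesimal conformal condition directly, so it does not presuppose the normal form $2\beta(a,p)p-\beta(p,p)a$. If one takes that differential condition as the definition of a conformal Riccati field, you need one extra standard fact to be airtight. For $n\geq3$, the first prolongation of $\mathfrak{co}(G)$ consists exactly of the maps $(v,h)\mapsto\beta(a,v)h+\beta(a,h)v-\beta(v,h)a$, and this places every such $Q$ in your normal form.
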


\begin{proof}
For a projective Riccati field, the quadratic term has the form \(Q(v)=\lambda(v)v\), and is therefore collinear with \(v\) for every \(v\).  Here
\[
Q(1,0,0,0,0)^{\mathsf T}=(0,-1,0,-3,0)^{\mathsf T},
\]
which is not collinear with the input.  Collinearity of the radial quadratic field is preserved by invertible linear coordinate changes.

For a conformal Riccati field relative to a nondegenerate symmetric matrix \(G\), the derivative matrices of the quadratic part satisfy
\[
 DQ(v)^{\mathsf T}G+GDQ(v)
 =\frac25\operatorname{tr}(DQ(v))G
\]
for every \(v\) \cite{deLucasTobolskiVilarino2016}.  Substitution of the five coefficient matrices gives a homogeneous linear system for the 15 independent entries of \(G\) with rank 15.  Hence \(G=0\) is the only solution, and no nondegenerate conformal metric exists.
\end{proof}

The elliptic modulus is not forced by the geometry of the invariant quartic alone.  To isolate the restriction imposed by Pentarnion multiplication, consider the family
\begin{equation}
\label{eq:jacobi-family}
\begin{aligned}
 \dot w&=x-z+wx+3wz-3xy,\\
 \dot x&=y-(1+m)w-w^2+2mwy,\\
 \dot y&=2wx,\\
 \dot z&=1-2(1+m)w^2+3muw,\\
 \dot u&=3wz.
\end{aligned}
\end{equation}
The ideal generated by \(G_1,G_2,G_3\) and
\[
G_{4,m}=x^2-my^2+(1+m)y-1
\]
is invariant.  On its compact real oval, the solution through \(p(0)=i\) is
\[
p_m(t)=s_m+s_m' i+s_m^2j+s_ms_m'k+s_m^3\ell,
\qquad
s_m=\operatorname{sn}(t\mid m).
\]

\begin{theorem}[Pentarnion selection of the Jacobi modulus]
\label{thm:modulus-selection}
Allow arbitrary real-linear maps \(\mathcal A,\mathcal B,\mathcal N_m:\Pent\to\Pent\).  The Jacobi Family Equation~\eqref{eq:jacobi-family} admits a representation
\begin{equation}
\label{eq:two-product-ansatz}
 \dot p=k+\mathcal N_m(p)+p\mathcal A(p)+\mathcal B(p)p
\end{equation}
using two ordered canonical Pentarnion products if and only if
\[
m=\frac12.
\]
At \(m=1/2\), Equation~\eqref{eq:compact-genus-one-pentarnion} gives an explicit representation.
\end{theorem}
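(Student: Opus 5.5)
The plan is to reduce the statement to a linear membership problem for quadratic vector fields. The constant term of \eqref{eq:two-product-ansatz} is \(k\), which matches the constant vector \((0,0,0,1,0)^{\mathsf T}\) of \eqref{eq:jacobi-family}. The linear part \(\mathcal N_m\) is an arbitrary real-linear map, so it absorbs any linear part. The only constraint is therefore on the homogeneous quadratic part. Identify \(\mathcal A,\mathcal B\) with \(5\times5\) matrices \(A,B\) in the canonical basis \(\mathcal B\). The map
\[
 \Phi:(A,B)\longmapsto \bigl[p\,\mathcal A(p)+\mathcal B(p)\,p\bigr]_{\mathcal B}
\]
is then a linear map from \(\R^{50}\) into the 75-dimensional space \(\mathcal Q\) of homogeneous quadratic fields on \(\R^5\). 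Let \(W=\operatorname{im}\Phi\), and let \(Q_m\in\mathcal Q\) be the quadratic part of \eqref{eq:jacobi-family}. The statement is equivalent to: \(Q_m\in W\) if and only if \(m=1/2\).

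Sufficiency is immediate from Proposition~\ref{prop:genus-one-components}. At \(m=1/2\), family \eqref{eq:jacobi-family} coincides with \eqref{eq:dense-genus-one-flow}. Taking \(\mathcal A=\mathcal M+\mathcal K\), \(\mathcal B=\mathcal M-\mathcal K\) and \(\mathcal N_{1/2}=\mathcal N\) gives the representation.

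For necessity, I would note that \(Q_m=Q_0+mQ'\) depends affinely on \(m\). The \(m\)-dependence sits only in the monomials \(w^2, wy\) of the \(x\)-component and \(w^2, uw\) of the \(z\)-component, with
\[
 Q'=(0,\,-2w^2+2wy,\,0,\,-2w^2+3uw,\,0)^{\mathsf T}.
\]
Hence \(Q_m\in W\) for some \(m\) forces \(Q'\in W\) if two distinct values worked. It therefore suffices to show two things: \(Q'\notin W\), and \(Q_0\notin W\). Then \(\{m: Q_m\in W\}\) is an affine subset of \(\R\) that is neither all of \(\R\) nor empty, namely the single point \(1/2\). Concretely, I would produce a linear functional \(\lambda\) on \(\mathcal Q\) with \(\lambda|_W=0\) and \(\lambda(Q')\neq0\). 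Then \(\lambda(Q_m)=\lambda(Q_0)+m\lambda(Q')\) vanishes only at \(m=-\lambda(Q_0)/\lambda(Q')\), and the \(m=1/2\) representation forces this root to be \(1/2\).

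To find \(\lambda\), I would expand \(\Phi\) in scalar--pure form using \eqref{eq:scalar-vector-product}. Writing \(p=(s,u)\), \(\mathcal A(p)=(\alpha(p),A_V(p))\) and \(\mathcal B(p)=(\beta(p),B_V(p))\), the pure part of \(\Phi\) is
\[
 s\bigl(A_V(p)+B_V(p)\bigr)+\bigl(\alpha(p)+\beta(p)\bigr)u+u\times A_V(p)-u\times B_V(p),
\]
where the last term comes from \(B_V(p)\times u=-u\times B_V(p)\). One must track which entries of \(A,B\) feed into the coefficients of \(w^2,wy\) in the \(x\)-row and \(w^2,uw\) in the \(z\)-row:
\begin{itemize}
 \item the \(w^2\) coefficients come only from the \(w\)-column of \(A_V+B_V\);
 \item \(wy\) in row \(x\) and \(uw\) in row \(z\) come from the \(y\)- and \(u\)-columns of \(A_V+B_V\), and also from the \(w\)-columns of \(A_V\) and \(B_V\) through the cross terms \(u\times A_V(p)\) and \(u\times B_V(p)\).
\end{itemize}
The same entries reappear in other monomial slots that \(Q_m\) prescribes independently of \(m\). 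Examples are the \(y^2\), \(yu\) and \(xy\) terms, and the \(w\)-row entries \(wx, wz, xy\), which must equal \(1,3,-3\). The obstruction should arise from eliminating these shared entries. I expect the result to be a relation among the four \(m\)-dependent coefficients and some fixed coefficients, for instance one forcing the \(uw\) coefficient in row \(z\) to equal \(3\) times the \(wy\) coefficient in row \(x\) minus a fixed constant. Such a relation holds at \(m=1/2\) but not for other \(m\).

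Operationally, I would set up the \(75\times50\) matrix of \(\Phi\) from the multiplication table and compute its rank. Then I would solve the augmented system \(\Phi(A,B)=Q_m\) over \(\Q(m)\) by Gaussian elimination, or equivalently find a left-kernel vector \(\lambda\) with \(\lambda(Q')\neq0\). Finally, I would extract a short, human-readable subset of equations that already yields \(m=1/2\).

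The main obstacle is this last extraction step. It means isolating a small, verifiable subsystem, a handful of monomial coefficients together with the entries of \(A,B\) feeding them, whose consistency forces \(m=1/2\), rather than merely reporting a computer rank computation. I would first try the components supported on \(w\)-containing monomials, since all the \(m\)-dependence sits there. I would also use that the scalar-part equation, \(-\langle u,A_V(p)\rangle-\langle B_V(p),u\rangle+s(\alpha+\beta)(p)\), pins down many of the relevant entries early.
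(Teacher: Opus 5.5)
This is essentially the paper's proof. Like yours, it reduces to membership of the quadratic part in the image of the $75\times50$ coefficient matrix (rank $44$), and it reports left-nullspace conditions on the Jacobi field that are all multiples of $1-2m$. The short human-readable certificate you hoped to extract does exist. Take the coefficient of $wz$ in the scalar row, add the coefficient of $w^2$ in the $k$-row, and subtract the coefficient of $z^2$ in the $k$-row. This combination vanishes identically on $p\mathcal A(p)+\mathcal B(p)p$: the $z$-coefficient of $\alpha+\beta$ and the $w$-coefficient of the $k$-component of $A_V+B_V$ each enter twice with cancelling signs, and $k\times x$ never has a $k$-component. On the family it equals $3-2(1+m)=1-2m$.

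One slip: the $w^2$ coefficient in the $x$-row is $-1$ for every $m$, because the term $-(1+m)w$ is linear and is absorbed by $\mathcal N_m$. So $Q'=(0,\,2wy,\,0,\,-2w^2+3uw,\,0)^{\mathsf T}$.
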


\begin{proof}
Matching the quadratic coefficients in Equation~\eqref{eq:two-product-ansatz} defines a linear map
\[
\mathcal C:\operatorname{End}(\Pent)\oplus\operatorname{End}(\Pent)
\longrightarrow\operatorname{Hom}(S^2\Pent,\Pent).
\]
In canonical coordinates, \(\mathcal C\) is represented by a \(75\times50\) matrix of rank 44.  A basis of its left nullspace applied to the quadratic coefficient vector of equation~\eqref{eq:jacobi-family} produces three nonzero compatibility conditions, all proportional to \(1-2m\).  Therefore the quadratic field belongs to \(\operatorname{im}\mathcal C\) exactly for \(m=1/2\).
\end{proof}

\subsubsection{Relation with quadratic algebras and elliptic differential systems}
The use of nonassociative algebras to encode homogeneous quadratic differential equations was developed systematically in \cite{Markus1960,KinyonSagle1995}, where the multiplication is usually reconstructed from a prescribed quadratic map. Equation~\eqref{eq:compact-genus-one-pentarnion} proceeds in the opposite direction: the canonical Pentarnion multiplication is fixed first, and the quadratic field is required to lie in the image of two ordered product maps. The coefficient calculation in Theorem~\ref{thm:modulus-selection} shows that this representability condition is restrictive enough to select \(m=1/2\) from the continuous Jacobi family in Equation~\eqref{eq:jacobi-family}. Thus the genus-one curve is recovered from, and algebraically constrained by, the fixed Pentarnion equation rather than inserted into an arbitrary quadratic extension.

\subsection{A bracketing family with associator-controlled global dynamics}
\label{subsec:bracketing-family}

We now hold the algebra, coefficients and initial condition fixed and vary only the placement of parentheses.  Let
\begin{align}
A&=-\frac12-2i+\frac12j-k-\frac12\ell,
\label{eq:bracketing-A}\\
B&=1+2j-\frac12\ell.
\label{eq:bracketing-B}
\end{align}
For \(0\le\eta\le1\), consider
\begin{equation}
\label{eq:bracketing-compact}
\dot p=Ap+(1-\eta)B(pp)+\eta(Bp)p
      =Ap+B(pp)+\eta[B,p,p].
\end{equation}
Thus \(\eta\) measures the associator contribution without changing \(A\), \(B\), or the underlying multiplication table.  The endpoints are the left-bracketed Equation \(Ap+B(pp)\) and the right-bracketed Equation \(Ap+(Bp)p\).

All trajectories reported below start from
\begin{equation}
\label{eq:bracketing-p0}
p(0)=p_0=\frac12(1-i-j+k-\ell).
\end{equation}
At this point the deformation is already active:
\begin{equation}
\label{eq:bracketing-initial-associator}
\begin{aligned}
[B,p_0,p_0]
 &=\frac18\left(3\sqrt2+7i+13j-5k-15\ell\right),\\
\|[B,p_0,p_0]\|&=\frac{9\sqrt6}{8}.
\end{aligned}
\end{equation}

\subsubsection{Matrix representation and variational equation}

Let \(x=[p]_{\mathcal B}\).  For a fixed element
\(q=q_0+q_1i+q_2j+q_3k+q_4\ell\), define \(L(q)\) by
\([qp]_{\mathcal B}=L(q)x\).  The canonical multiplication table gives
\begingroup
\small
\begin{equation}
\label{eq:bracketing-left-matrix}
L(q)=
\begin{pmatrix}
q_0&-q_1&-q_2&-q_3&-q_4\\
q_1&q_0&\frac{-q_3+q_4}{\sqrt2}&\frac{q_2+q_4}{\sqrt2}&\frac{-q_2-q_3}{\sqrt2}\\
q_2&\frac{q_3+q_4}{\sqrt2}&q_0&\frac{-q_1+q_4}{\sqrt2}&\frac{-q_1-q_3}{\sqrt2}\\
q_3&\frac{-q_2+q_4}{\sqrt2}&\frac{q_1+q_4}{\sqrt2}&q_0&\frac{-q_1-q_2}{\sqrt2}\\
q_4&\frac{-q_2+q_3}{\sqrt2}&\frac{q_1-q_3}{\sqrt2}&\frac{-q_1+q_2}{\sqrt2}&q_0
\end{pmatrix}.
\end{equation}
\endgroup
With \(C=\operatorname{diag}(1,-1,-1,-1,-1)\), right multiplication is
\begin{equation}
R(q)=C L(\bar q) C,
\qquad
[pq]_{\mathcal B}=R(q)x.
\label{eq:bracketing-right-matrix}
\end{equation}
The square map is
\begin{equation}
Q(x):=[p^2]_{\mathcal B}
=
\begin{pmatrix}
x_0^2-x_1^2-x_2^2-x_3^2-x_4^2\\
2x_0x_1\\2x_0x_2\\2x_0x_3\\2x_0x_4
\end{pmatrix}.
\label{eq:bracketing-square-map}
\end{equation}
Consequently, Equation~\eqref{eq:bracketing-compact} is equivalent to
\begin{equation}
\dot x=L(A)x+(1-\eta)L(B)Q(x)+\eta R(p)L(B)x.
\label{eq:bracketing-matrix-flow}
\end{equation}
In particular,
\[
[B,p,p]_{\mathcal B}=R(p)L(B)x-L(B)Q(x).
\]

\begin{proposition}[Tangent equation, Jacobian and divergence]
\label{prop:bracketing-tangent-divergence}
For a tangent perturbation \(h\),
\begin{equation}
\dot h=Ah+(1-\eta)B(ph+hp)
+\eta\bigl((Bh)p+(Bp)h\bigr).
\label{eq:bracketing-tangent}
\end{equation}
In canonical coordinates,
\begin{equation}
DF_\eta(x)
=L(A)+(1-\eta)L(B)\bigl(L(p)+R(p)\bigr)
+\eta\bigl(R(p)L(B)+L(Bp)\bigr),
\label{eq:bracketing-jacobian}
\end{equation}
and
\begin{equation}
\nabla\!\cdot F_\eta(x)
=-\frac52+10x_0-(8+4\eta)x_2+(2+3\eta)x_4.
\label{eq:bracketing-divergence}
\end{equation}
\end{proposition}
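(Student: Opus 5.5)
The tangent equation and Jacobian come from differentiating the bilinear products; the divergence is then the trace of the Jacobian, computed with a few bilinear trace forms.

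\textbf{Tangent equation.} Write \(F_\eta(p)=Ap+(1-\eta)B(pp)+\eta(Bp)p\). Each product in \(\Pent\) is real-bilinear, so the Leibniz rule holds for every factor without any associativity.
\begin{itemize}
\item The map \(p\mapsto pp\) has directional derivative \(ph+hp\).
\item Left multiplication by the constant \(B\) is linear, so \(B(pp)\) has derivative \(B(ph+hp)\).
\item For \((Bp)p\), differentiate the inner and outer factors separately. This gives \((Bh)p+(Bp)h\), with the displayed bracketing kept.
\end{itemize}
Adding the linear term \(Ah\) gives \eqref{eq:bracketing-tangent}. Note that \(ph+hp=2p\circ h\), which is consistent with the Jordan product of Lemma~\ref{le:Jordanadm}, but this is not needed.

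\textbf{Jacobian.} Translate each term with the matrices \(L(\cdot)\) and \(R(\cdot)\) of \eqref{eq:bracketing-left-matrix} and \eqref{eq:bracketing-right-matrix}, taking \([qh]_{\mathcal B}=L(q)[h]\) and \([hq]_{\mathcal B}=R(q)[h]\).
\begin{itemize}
\item \(Ah\mapsto L(A)[h]\).
\item \(B(ph+hp)\mapsto L(B)\bigl(L(p)+R(p)\bigr)[h]\).
\item \((Bh)p\mapsto R(p)L(B)[h]\).
\item \((Bp)h\mapsto L(Bp)[h]\).
\end{itemize}
Together these give \eqref{eq:bracketing-jacobian}. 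The same bookkeeping also reproduces \eqref{eq:bracketing-matrix-flow}, as a consistency check.

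\textbf{Divergence.} Take the trace of \eqref{eq:bracketing-jacobian}.
\begin{itemize}
\item Every diagonal entry of \(L(q)\) equals \(q_0\), so \(\operatorname{tr}L(q)=5q_0\). Conjugation by \(C\) preserves the trace, and \(\bar q\) has scalar part \(q_0\), so \(\operatorname{tr}R(q)=\operatorname{tr}L(\bar q)=5q_0\) as well.
\item Hence \(\operatorname{tr}L(A)=-\tfrac52\) and \(\operatorname{tr}L(Bp)=5\operatorname{Re}(Bp)=5x_0-10x_2+\tfrac52x_4\), using \(\operatorname{Re}(Bp)=b_0s-\langle w,u\rangle\).
\item By cyclicity, \(\operatorname{tr}(R(p)L(B))=\operatorname{tr}(L(B)R(p))\).
\end{itemize}
Define the bilinear forms \(\tau_{LL}(q,p)=\operatorname{tr}(L(q)L(p))\) and \(\tau_{LR}(q,p)=\operatorname{tr}(L(q)R(p))\). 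The divergence becomes
\[
-\tfrac52+(1-\eta)\bigl(\tau_{LL}(B,p)+\tau_{LR}(B,p)\bigr)+\eta\bigl(\tau_{LR}(B,p)+5\operatorname{Re}(Bp)\bigr).
\]

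\textbf{Main obstacle.} The real work is computing \(\tau_{LL}\) and \(\tau_{LR}\) in closed form. My approach is to split the trace over the scalar row and column and the pure \(4\times4\) block.
\begin{itemize}
\item The scalar/pure cross entries pair \(q_a\) with \(-p_a\), and the scalar diagonal contributes \(q_0p_0\). Together these should give the expected \(5q_0p_0\)-type term plus a multiple of \(\langle q,p\rangle\) on pure parts.
\item The pure block of \(L(q)\) is \(q_0I+\Lambda(u)\), where \(\Lambda(u)\) is the left \(\times\)-multiplication matrix. I would compute \(\operatorname{tr}(\Lambda(w)\Lambda(u))\) once for general pure \(w,u\). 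For \(\tau_{LR}\), the pure block of \(R(p)\) is \(p_0I-\Lambda(u)\), so the same form appears with a sign change.
\end{itemize}

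\textbf{Checks before substituting.} The \(\eta\)-independence of the \(x_0\)-coefficient 10 requires the \(x_0\)-coefficient of \(\tau_{LR}(B,p)\) to be \(5\). It also requires the \(x_0\)-coefficient of \(\tau_{LL}+\tau_{LR}\) to be \(10\). I will verify both, then substitute \(B=(1,0,2,0,-\tfrac12)\) and collect the \(x_2\) and \(x_4\) coefficients to obtain \(-(8+4\eta)\) and \(2+3\eta\). A computer algebra cross-check of \(\operatorname{tr}DF_\eta\) will guard against sign slips in the \(\tfrac1{\sqrt2}\) entries.
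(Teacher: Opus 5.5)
Your proposal is correct and follows the same route as the paper: differentiate the bilinear products, translate through \(L\) and \(R\), and take the trace. The paper leaves the trace computation implicit, and your block decomposition completes it: \(\operatorname{tr}\bigl(\Lambda(w)\Lambda(u)\bigr)=-w_1u_1-w_2u_2-w_3u_3+3w_4u_4\), so \(\tau_{LL}+\tau_{LR}=10q_0p_0-4\langle w,u\rangle=10x_0-8x_2+2x_4\) and \(\tau_{LR}(B,p)=5x_0-2x_2+\tfrac52x_4\), which gives exactly the stated coefficients \(10\), \(-(8+4\eta)\) and \(2+3\eta\).
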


\begin{proof}
The directional derivative of \(p\mapsto pp\) is \(ph+hp\), whereas that of \(p\mapsto(Bp)p\) is \((Bh)p+(Bp)h\).  Translation through \(L\) and \(R\) gives Equation~\eqref{eq:bracketing-jacobian}, and taking the trace gives Equation~\eqref{eq:bracketing-divergence}.
\end{proof}

\subsubsection{Exact local structure at the origin}

Because \(\eta\) appears only in terms that are quadratic in \(p\), changing the bracketing does not change the linearised flow at the origin.

\begin{theorem}[Parameter-independent saddle-focus]
\label{thm:bracketing-origin}
For every \(0\le\eta\le1\), the origin of Equation~\eqref{eq:bracketing-compact} has linearisation \(L(A)\).  Its characteristic polynomial is
\begin{equation}
\chi_{L(A)}(\lambda)=
\left(\lambda^2+\lambda+\frac{23}{4}\right)
\left(\lambda^3+\frac32\lambda^2+3\lambda
+\frac54+\frac{49\sqrt2}{8}\right).
\label{eq:bracketing-charpoly}
\end{equation}
The origin is therefore a hyperbolic saddle-focus with a two-dimensional unstable manifold and a three-dimensional stable manifold for all \(\eta\in[0,1]\).
\end{theorem}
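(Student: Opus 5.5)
The proof rests on three facts: the nonlinear part is quadratic, $L(A)$ has an explicit factored characteristic polynomial, and the eigenvalue signs can be read off by Routh--Hurwitz.

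\emph{Linearisation.} By Equation~\eqref{eq:bracketing-matrix-flow}, the vector field is $F_\eta(x)=L(A)x+(1-\eta)L(B)Q(x)+\eta R(p)L(B)x$. Both nonlinear terms are homogeneous quadratic in $x$, since $R(p)$ depends linearly on $x$. Hence $F_\eta(0)=0$ and $DF_\eta(0)=L(A)$. The same conclusion follows from Equation~\eqref{eq:bracketing-jacobian} at $p=0$, because $L(p)$, $R(p)$ and $L(Bp)$ all vanish there. This step is immediate and independent of $\eta$.

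\emph{Characteristic polynomial.} Substitute the coefficients of $A$ from Equation~\eqref{eq:bracketing-A} into the matrix~\eqref{eq:bracketing-left-matrix}, namely $q_0=-\tfrac12$, $q_1=-2$, $q_2=\tfrac12$, $q_3=-1$, $q_4=-\tfrac12$. Then expand $\det(\lambda I-L(A))$.

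To simplify, write $L(A)=-\tfrac12 I+\Lambda$, where $\Lambda$ is the pure part of $L(A)$. Then $\det(\lambda I-L(A))=\det(\mu I-\Lambda)$ with $\mu=\lambda+\tfrac12$. The pure part is not skew-symmetric, because the product is not antisymmetric in the canonical table entries. Even so, expanding a $5\times5$ determinant symbolically is routine.

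I would verify the factorisation in Equation~\eqref{eq:bracketing-charpoly} by multiplying out the right-hand side and matching the five coefficients of the quintic. I expect this to be the main computational obstacle. The $\sqrt2$-entries must combine correctly, and only the constant term should carry the irrational part $\tfrac{49\sqrt2}{8}$. A useful check is the trace: $\operatorname{tr}L(A)=5q_0=-\tfrac52$, which must equal $-(1+\tfrac32)$, and it does.

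\emph{Roots and signs.} The quadratic factor $\lambda^2+\lambda+\tfrac{23}{4}$ has roots $-\tfrac12\pm i\sqrt{\tfrac{11}{2}}$. These lie in the open left half-plane, so they contribute two stable directions.

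For the cubic $\lambda^3+a_2\lambda^2+a_1\lambda+a_0$ with $a_2=\tfrac32$, $a_1=3$ and $a_0=\tfrac54+\tfrac{49\sqrt2}{8}\approx 9.91$, I would apply Routh--Hurwitz. All coefficients are positive, but $a_2a_1=\tfrac92<a_0$. So the cubic is not Hurwitz, and it has exactly two roots in the right half-plane.

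The root count can be confirmed directly. Since $a_0>0$, the cubic is positive at $\lambda=0$ and tends to $-\infty$ as $\lambda\to-\infty$, so it has a negative real root. The sum of the three roots is $-\tfrac32$. Its derivative $3\lambda^2+3\lambda+3$ has no real zeros, so the cubic is monotone and that real root is unique. The remaining two roots are therefore a complex pair. The Routh array shows the sign pattern $+,+,-,+$, which has two sign changes, so this pair has positive real part.

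It remains to exclude roots on the imaginary axis. A purely imaginary root would require $a_0=a_1a_2$, which fails. Hence the origin is hyperbolic.

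\emph{Conclusion.} The spectrum consists of two complex eigenvalues with positive real part, giving a two-dimensional unstable manifold, and one real plus two complex eigenvalues with negative real part, giving a three-dimensional stable manifold. Because complex eigenvalues are present, the equilibrium is a saddle-focus. The stable and unstable manifold theorem applies to every $\eta\in[0,1]$, since the linear part is the same for all $\eta$.
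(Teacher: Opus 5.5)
Your proposal is correct and follows the paper's proof: $DF_\eta(0)=L(A)$ because the $\eta$-dependent terms are quadratic, the factorisation is confirmed by direct expansion, and the left-half-plane quadratic factor together with the Routh array $+,+,-,+$ for the cubic gives the $2+3$ splitting. Your added remark that $3\lambda^2+3\lambda+3>0$ forces the cubic to have exactly one real root is a small improvement. The paper relies on the numerical roots to see that the unstable pair is complex, whereas your argument shows this analytically.
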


\begin{proof}
Both quadratic terms vanish to second order at the origin, so
\(DF_\eta(0)=L(A)\).  In the canonical basis,
\[
L(A)=\begin{pmatrix}
-\tfrac12&2&-\tfrac12&1&\tfrac12\\
-2&-\tfrac12&\tfrac{\sqrt2}{4}&0&\tfrac{\sqrt2}{4}\\
\tfrac12&-\tfrac{3\sqrt2}{4}&-\tfrac12&\tfrac{3\sqrt2}{4}&\tfrac{3\sqrt2}{2}\\
-1&-\tfrac{\sqrt2}{2}&-\tfrac{5\sqrt2}{4}&-\tfrac12&\tfrac{3\sqrt2}{4}\\
-\tfrac12&-\tfrac{3\sqrt2}{4}&-\tfrac{\sqrt2}{2}&\tfrac{5\sqrt2}{4}&-\tfrac12
\end{pmatrix}.
\]
Its characteristic polynomial factors as in Equation~\eqref{eq:bracketing-charpoly}.  The quadratic factor has roots
\[
-\frac12\pm\frac{\sqrt{22}}{2}\,\mathrm{i}_{\mathrm c},
\]
where \(\mathrm{i}_{\mathrm c}^2=-1\) is the ordinary complex unit.  The first column of the Routh array for the cubic factor has signs \(+,+,-,+\), so the cubic has two roots in the open right half-plane and one in the open left half-plane.  Numerically they are
\[
0.346511\pm2.097557\,\mathrm{i}_{\mathrm c},
\qquad
-2.193023.
\]
Thus no eigenvalue lies on the imaginary axis, and the dimensions of the stable and unstable manifolds follow.
\end{proof}

\raggedbottom
\subsubsection{Global dynamics at the two bracketings}

The same initial value equation~\eqref{eq:bracketing-p0} is used at both endpoints.  Fixed-step fourth-order Runge--Kutta calculations used \(\Delta t=0.005\), a transient of \(1000\), a \(5000\)-time-unit Lyapunov window and QR orthonormalization every \(0.25\) time units.  The phase portraits were plotted without point thinning.  Independent adaptive DOP853 integrations were used to cross-check the largest Lyapunov exponent.

At \(\eta=0\), the trajectory remains bounded over the computed horizon and exhibits irregular recurrent bursts.  At \(\eta=1\), it converges to a smooth periodic orbit.  The full finite-time spectra are
\begin{table}[H]
\centering
\resizebox{\linewidth}{!}{\begin{tabular}{crrrrrrr}
\toprule
$\eta$ & $\lambda_1$ & $\lambda_2$ & $\lambda_3$ & $\lambda_4$ & $\lambda_5$ & $\sum\lambda_i$ & $\langle\nabla\!\cdot F\rangle$\\
\midrule
0 & 0.351317 & -0.000065 & -0.588202 & -1.224216 & -2.427302 & -3.888468 & -3.888468\\
1 & -0.000025 & -0.110993 & -0.324610 & -0.736304 & -2.000981 & -3.172913 & -3.172913\\
\bottomrule
\end{tabular}}
\caption{Finite-time Lyapunov spectra and the independent mean-divergence check for Equation~\eqref{eq:bracketing-p0}.}
\label{tab:bracketing-spectra}
\end{table}
The adaptive calculation gave \(\lambda_1\approx0.33579\) at \(\eta=0\) and \(\lambda_1\approx-2.25\times10^{-4}\) at \(\eta=1\).  At the left-bracketed endpoint, the Kaplan--Yorke dimension estimate is
\[
D_{KY}\approx2.5972.
\]
Timestep refinement gave
\[
(\Delta t,\lambda_1)=(0.01,0.3433),\quad(0.005,0.3533),\quad(0.0025,0.3586),
\]
while the corresponding values at \(\eta=1\) remained numerically zero.

\begin{figure}[!t]
\centering
\includegraphics[width=.99\linewidth]{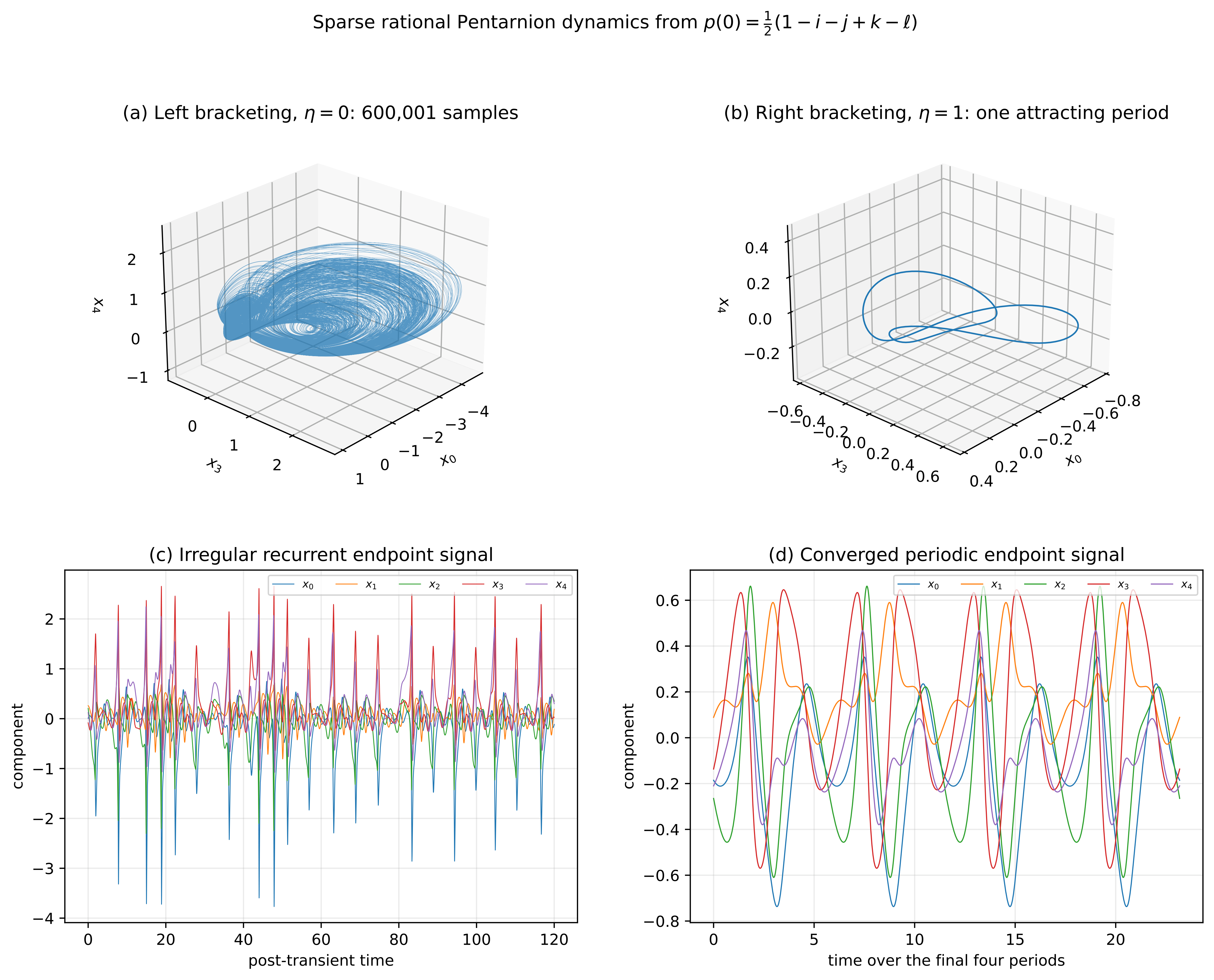}
\caption{Endpoint dynamics of Equation~\eqref{eq:bracketing-compact}.  Left: the bounded aperiodic regime at \(\eta=0\), for which the diagnostics below give numerical evidence of chaos.  Right: convergence to an attracting periodic orbit at \(\eta=1\).  The lower panels show the five canonical components using the full recorded sampling resolution.}
\label{fig:bracketing-endpoints}
\end{figure}

Over \(10{,}000\) time units,
\[
\max\|p(t)\|\leq5.81\quad(\eta=0),
\qquad
\max\|p(t)\|\leq1.77\quad(\eta=1),
\]
where the latter maximum occurs during the initial approach; subsequent \(1000\)-time-unit windows have maximum norm approximately \(1.131\).  The Poincar\'e section \(x_1=0\) with upward crossings gives an extended set at \(\eta=0\) and one numerical point per period at \(\eta=1\).  The normalized spectral entropy of \(x_2(t)\) is \(0.6143\) and \(0.1854\), respectively.  Approximately \(3080\) Fourier bins are required to collect \(90\%\) of the power at \(\eta=0\), compared with \(7\) at \(\eta=1\).

\begin{figure}[!t]
\centering
\includegraphics[width=.84\linewidth]{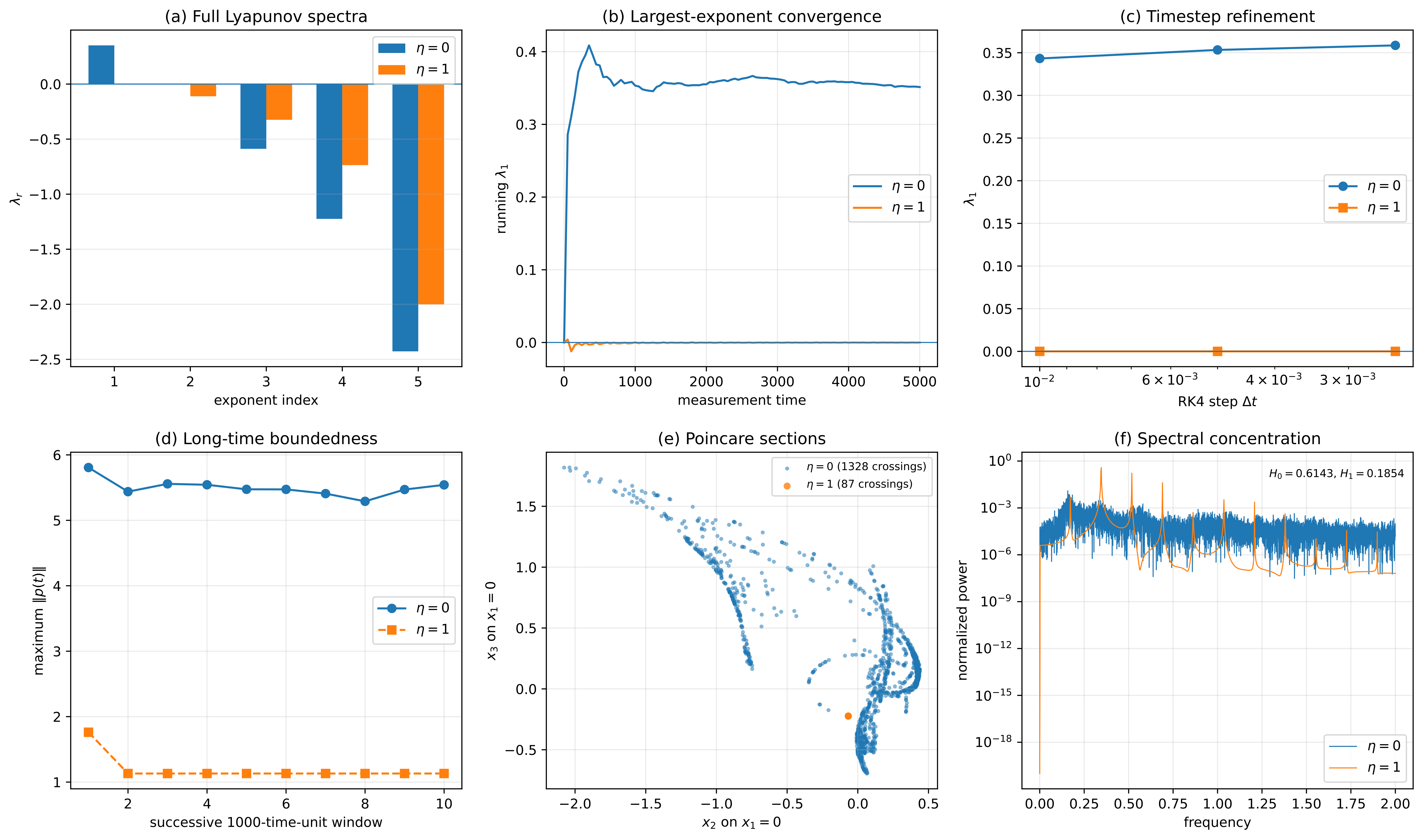}
\caption{Numerical diagnostics for the two endpoint flows: Lyapunov spectra and convergence, timestep refinement, long-time norm bounds, Poincar\'e sections and spectral concentration.}
\label{fig:bracketing-validation}
\end{figure}

Together, the positive largest exponent, near-zero flow exponent, mean-divergence agreement, extended Poincar\'e section and broadband spectrum provide mutually consistent numerical evidence for a bounded chaotic invariant set at \(\eta=0\).  The right-bracketed endpoint is periodic.  Establishing a hyperbolic invariant set at \(\eta=0\) would require a separate computer-assisted argument.

\subsubsection{Periodic orbit and Floquet stability at the right-bracketed endpoint}

A shooting calculation with a phase condition and simultaneous variational integration gives the point
\begin{equation}
\begin{aligned}
p_{\mathrm{per}}={}&0.352211456319145
+0.280943768469983i
+0.580647718066596j\\
&+0.274403530427830k
+0.442578560477541\ell
\end{aligned}
\label{eq:bracketing-periodic-point}
\end{equation}
on a periodic orbit of period
\begin{equation}
T=5.795118818486838.
\label{eq:bracketing-period}
\end{equation}
Newton correction using the monodromy matrix reduces the shooting residual to \(5.7\times10^{-15}\).  The Floquet multipliers are
\begin{equation}
1,
\quad0.526353763,
\quad-0.152037463,
\quad0.013976494,
\quad-9.21838\times10^{-6}.
\label{eq:bracketing-floquet}
\end{equation}
At numerical precision, one multiplier is neutral in the phase direction and all four transverse multipliers lie strictly inside the unit circle.  The computed orbit is therefore orbitally asymptotically stable.

A multistart equilibrium search at \(\eta=0\) locates four real equilibria.  The irregular trajectory repeatedly passes through neighborhoods of the origin and two additional saddles; nearest-equilibrium assignment gives visitation fractions approximately \(0.630,0.208,0,0.162\).  These statistics are consistent with a saddle-mediated stretching-and-reinjection mechanism.  A rigorous homoclinic or horseshoe description would require additional global analysis.

\subsubsection{Associator balance and the saddle-node of periodic orbits}

For \(N(p)=\|p\|^2\), the norm rate decomposes exactly as
\begin{equation}
\frac{dN}{dt}
=2\langle p,Ap\rangle
+2\langle p,B(pp)\rangle
+2\eta\langle p,[B,p,p]\rangle.
\label{eq:bracketing-radial-balance}
\end{equation}
The endpoint averages are
\begin{table}[H]
\centering
\small
\begin{tabular}{crrrr}
\toprule
$\eta$ & linear & left quadratic & associator & total\\
\midrule
0 & -0.199331 & 0.199407 & 0 & $7.62\times10^{-5}$\\
1 & -0.099006 & -0.247837 & 0.346843 & $1.79\times10^{-14}$\\
\bottomrule
\end{tabular}
\caption{Mean contributions to \(dN/dt\).  On the right-bracketed cycle, the associator supplies the positive nonlinear contribution that balances the two negative terms.}
\label{tab:bracketing-radial}
\end{table}
At \(\eta=0\), linear loss and intermittent left-quadratic input balance on average.  At \(\eta=1\), the linear and left-bracketed contributions are both negative, and the associator closes the cycle balance.  Along the cycle, the mean norm of the associator term is approximately \(0.8465\), and its mean magnitude is about \(61\%\) of the total vector-field magnitude.  The periodic orbit is therefore sustained by an order-one nonassociative contribution rather than by a weak correction to the left-bracketed flow.

Pseudo-arclength shooting continuation follows the stable periodic orbit from \(\eta=1\) toward smaller values, turns at
\begin{equation}
\eta_{\mathrm{SN}}\approx0.50903065,
\qquad
T_{\mathrm{SN}}\approx4.947,
\label{eq:bracketing-saddle-node}
\end{equation}
and reveals a companion unstable branch.  A nontrivial real Floquet multiplier passes through \(+1\) at the turning point, identifying a saddle-node bifurcation of periodic orbits.  Hence the attracting recurrent state at \(\eta=1\) is created by a nonlinear periodic-orbit bifurcation, not by stabilization of the common saddle-focus at the origin.

\begin{figure}[!b]
\centering
\includegraphics[width=.99\linewidth]{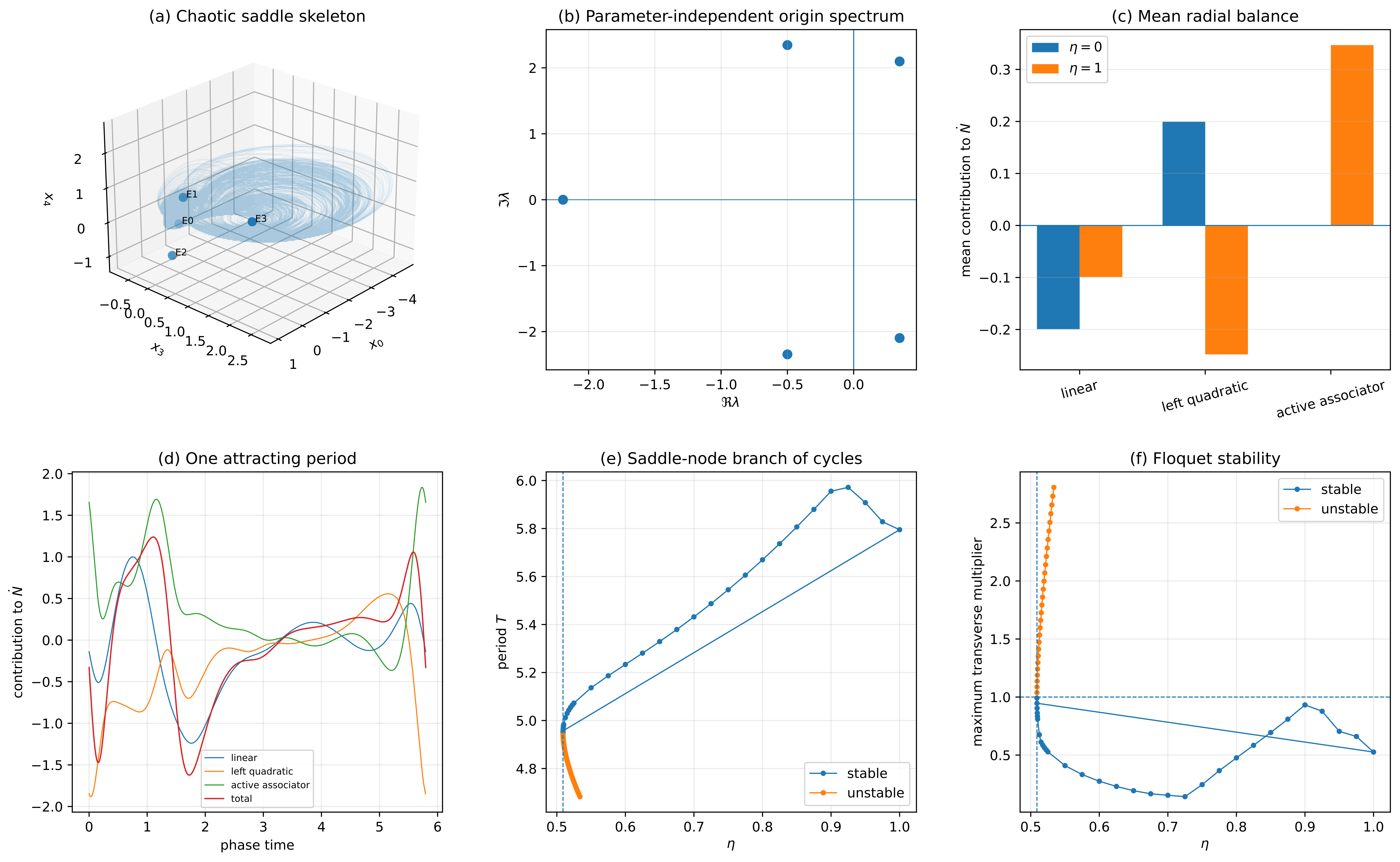}
\caption{Local and global organization of the bracketing family: the parameter-independent saddle-focus, saddle visitation, the associator-resolved radial balance, and the stable and unstable periodic branches meeting at a fold.}
\label{fig:bracketing-mechanism}
\end{figure}

The exact algebraic identities show that the parameter changes only the nonlinear associator contribution, while the numerical continuation identifies the corresponding global reorganization.  The common saddle-focus persists throughout the family, but the radial energy budget changes and a stable periodic branch is created at a fold.  At the two endpoint bracketings, the computations support bounded chaotic dynamics on the left and periodic recurrence on the right.

\subsection{Consequences for Pentarnion dynamics}
\label{subsec:ode-discussion}

Together with the rank-four theory of Section~\ref{sec:rank-four-riccati}, the elliptic equation and the bracketing family reveal three different ways in which the fixed Pentarnion multiplication enters differential equations.  First, bracket-complete words provide a compact encoding of structured five-dimensional quadratic fields and the flexibility defect can raise the one-generated algebraic rank, activating an exactly integrable three-dimensional flow.  Second, in Equation~\eqref{eq:compact-genus-one-pentarnion}, two ordered products constrain an entire Jacobi family and select the modulus \(m=1/2\); the resulting ambient flow is irreducible under every fixed linear decomposition and lies outside the projective and conformal Riccati classes.  Third, in Equation~\eqref{eq:bracketing-compact}, the same multiplication makes two parenthesizations inequivalent, and the resulting associator changes the global recurrent dynamics without altering the linearisation at the origin.

These mechanisms are complementary rather than interchangeable.  The genus-one flow supplies exact full-dimensional integrability, while the bracketing family isolates a directly measurable nonassociative contribution to the creation and maintenance of a periodic orbit.  The appendix strengthens this distinction by giving a reducible solvable flow whose exact recurrence exists only at the associator-compensated value: full affine span alone does not imply irreducibility, but the periodic orbit remains sensitive to nonassociativity.

\section*{Acknowledgements}
AP acknowledges the University of Hertfordshire for providing the research environment in which this work was completed. The ideas developed in this paper originated in investigations of algebra-valued formulations of polynomial differential equations motivated by the generalisation of the Hodgkin–Huxley model. AP was also inspired by Sir William Rowan Hamilton’s \textit{Lectures on Quaternions} and by the mathematical writings of Sir Isaac Newton.

The authors would like to thank S. Walcher for some fruitful discussions on the content of the paper.

\appendix
\raggedbottom

\section{Associator compensation and destruction of a \texorpdfstring{$2{:}3{:}5$}{2:3:5} orbit}
\label{app:associator-235}

This appendix isolates, in an exactly solvable setting, the effect of leaving an associator uncompensated.  At one parameter value the equation separates into two Riccati-type subsystems and possesses a full-affine-span periodic orbit with frequencies \(2{:}3{:}5\).  The orbit is not robust under the nonassociative deformation: for every other parameter value, the associator changes the initial tangent and destroys the displayed periodic solution.

For \(p=w+xi+yj+zk+u\ell\), define
\begin{equation}
\label{eq:app-qr-splitting}
 q=-\frac12 i(ip+pi)=w+xi,
 \qquad
 r=p-q=yj+zk+u\ell.
\end{equation}
The splitting is recovered through multiplication by the canonical unit \(i\); no alternative basis is introduced.  For \(0\leq\eta\leq1\), consider
\begin{equation}
\label{eq:app-associator-family}
\begin{aligned}
\dot p={}&2i\bigl(q(1-q)\bigr)\\
&+\frac32\left[j(kr+rk)-(jr+rj)k\right]\\
&+\frac52\left[\ell-(r\ell)r-\eta[\ell,r,r]\right].
\end{aligned}
\end{equation}
Writing its vector field as \(F_\eta\),
\begin{equation}
\label{eq:app-associator-difference}
F_\eta(p)=F_1(p)+\frac52(1-\eta)[\ell,r,r].
\end{equation}
Thus \(\eta=1\) is the associator-compensated equation, whereas \(\eta<1\) leaves a nonzero fraction of the bracketing defect in the dynamics.

\begin{proposition}[Compensated block decomposition]
\label{prop:app-compensated-components}
At \(\eta=1\), Equation~\eqref{eq:app-associator-family} is equivalent to
\begin{equation}
\label{eq:app-compensated-components}
\begin{aligned}
\dot w&=-2x+4wx,\\
\dot x&=2w-2w^2+2x^2,\\
\dot y&=-3z+5uy,\\
\dot z&=3y+5uz,\\
\dot u&=\frac52\left(1-y^2-z^2+u^2\right).
\end{aligned}
\end{equation}
Equivalently, we have
\begin{equation}
\label{eq:app-product-blocks}
 \dot q=2iq(1-q),
 \qquad
 \dot r=\frac32\left[j(kr+rk)-(jr+rj)k\right]
 +\frac52\left[\ell-(r\ell)r-[\ell,r,r]\right].
\end{equation}
The compensated flow is therefore the direct product of a two-dimensional complex Riccati equation and a three-dimensional conformal Riccati equation.  The latter belongs to the class studied in \cite{deLucasTobolskiVilarino2016}.
\end{proposition}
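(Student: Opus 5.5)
The plan is to expand each bracketed term of Equation~\eqref{eq:app-associator-family} at \(\eta=1\) in the scalar--pure form \eqref{eq:scalar-vector-product}, handling the \(q\)-block and the \(r\)-block separately. I then read off the five components.

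First I verify the splitting \eqref{eq:app-qr-splitting}. For \(p=w+xi+yj+zk+u\ell\), anti-commutativity of distinct pure units gives \(ip+pi=2wi-2x+0\cdot(yj+zk+u\ell)\), since \(i\) anti-commutes with \(j,k,\ell\). Then \(-\tfrac12 i(2wi-2x)=w+xi\), which is \(q\). Inside \(\mathbb C_i=\operatorname{span}\{1,i\}\), which is an associative copy of \(\C\) containing \(q\), we have \(q(1-q)=w-w^2+x^2+(x-2wx)i\). Multiplying by \(2i\) gives \(2i\,q(1-q)=-2(x-2wx)+2(w-w^2+x^2)i\). This yields the first two lines of \eqref{eq:app-compensated-components}, and the term lies in \(\mathbb C_i\).

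Next, the \(j,k\) term. Since \(r\) is pure, \(kr+rk=-2\langle k,r\rangle=-2z\) and \(jr+rj=-2y\). So \(j(kr+rk)-(jr+rj)k=-2zj+2yk\), and \(\tfrac32\) of it gives \(-3zj+3yk\). For the \(\ell\)-term, the key simplification is \((r\ell)r+[\ell,r,r]=(r\ell)r+(\ell r)r-\ell(rr)\). Using \(r\ell+\ell r=-2u\) and \(rr=-\|r\|^2\) with \(\|r\|^2=y^2+z^2+u^2\), this becomes \(-2ur+\|r\|^2\ell\). Hence
\[
\ell-(r\ell)r-[\ell,r,r]=\ell+2ur-(y^2+z^2+u^2)\ell ,
\]
and \(\tfrac52\) of it gives \(5uy\,j+5uz\,k+\tfrac52(1-y^2-z^2+u^2)\ell\). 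Adding the contributions gives lines three to five, and everything lies in \(\operatorname{span}\{j,k,\ell\}\).

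This establishes \eqref{eq:app-product-blocks}. The first block depends only on \(q\), the second only on \(r\), and the two target spaces are complementary, so the flow is a direct product. The first factor is the complex Riccati equation \(\dot q=2iq(1-q)\) in \(\mathbb C_i\cong\C\). For the second factor I would write \(\dot r=\tfrac52 e_u+\Omega r+5\langle e_u,r\rangle r-\tfrac52\|r\|^2e_u\) in the Euclidean coordinates \((y,z,u)\), where \(\Omega\) is the rotation generator \((y,z)\mapsto(-3z,3y)\). This is the standard conformal Riccati form \(\dot r=a+\Omega r+2\langle c,r\rangle r-\|r\|^2c\) with \(c=\tfrac52e_u\), matching \cite{deLucasTobolskiVilarino2016}. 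The main obstacle is the \(\ell\)-associator identity. It hinges on using only the quadratic relations \(r\ell+\ell r=-2\langle r,\ell\rangle\) and \(rr=-\|r\|^2\), never the specific products from \(\Pi\), so that all non-conformal cross terms cancel exactly. I would double-check it against the canonical product formula.
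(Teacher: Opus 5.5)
Your proposal is correct and follows essentially the same route as the paper. Like the paper, you expand the \(q\)-block inside \(\operatorname{span}\{1,i\}\), reduce \(j(kr+rk)-(jr+rj)k\) to \(-2zj+2yk\) by anticommutativity, and collapse the \(\ell\)-term to \((1-\|r\|^2)\ell+2ur\) using only \(r\ell+\ell r=-2u\) and \(r^2=-\|r\|^2\). Your explicit check of the splitting \(q=-\tfrac12 i(ip+pi)\), and of the conformal Riccati normal form with \(c=\tfrac52\ell\), supplies details that the paper leaves implicit.
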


\begin{proof}
Since \(q\in\operatorname{span}\{1,i\}\), expansion of \(2iq(1-q)\) gives the first two equations.  For \(r=yj+zk+u\ell\), anticommutativity gives
\[
j(kr+rk)-(jr+rj)k=-2zj+2yk.
\]
Moreover, \(r^2=-(y^2+z^2+u^2)\) and \(r\ell+\ell r=-2u\).  Hence
\[
\ell-(r\ell)r-[\ell,r,r]
=(1-y^2-z^2-u^2)\ell+2ur,
\]
which gives the last three equations and proves that the \(q\)- and \(r\)-subsystems close independently.
\end{proof}

\begin{theorem}[Exact full-span periodic solution]
\label{thm:app-associator-solution}
For
\begin{equation}
\label{eq:app-associator-p0}
 p(0)=p_0=\frac13+\frac1{\sqrt3}j,
\end{equation}
the compensated equation \(F_1\) has the solution
\begin{equation}
\label{eq:app-associator-orbit}
\begin{aligned}
p_*(t)={}&
\frac{1+2\cos2t}{5+4\cos2t}
+\frac{2\sin2t}{5+4\cos2t}\,i\\
&+\frac{\sqrt3\cos3t}{2+\cos5t}\,j
+\frac{\sqrt3\sin3t}{2+\cos5t}\,k
+\frac{\sin5t}{2+\cos5t}\,\ell.
\end{aligned}
\end{equation}
The solution is globally regular and embedded, has minimal period \(2\pi\), and its affine span is all of \(\Pent\).
\end{theorem}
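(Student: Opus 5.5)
The plan is to use the block decomposition of Proposition~\ref{prop:app-compensated-components}. Since the compensated flow splits into the \((w,x)\) system and the \((y,z,u)\) system, it suffices to verify each block of \eqref{eq:app-associator-orbit} separately. Uniqueness of solutions for the polynomial (hence locally Lipschitz) field \(F_1\) then identifies the displayed curve with the solution through \(p_0\). At \(t=0\), the formula gives \(w=3/9=1/3\), \(x=0\), \(y=\sqrt3/3=1/\sqrt3\), and \(z=u=0\). This matches \eqref{eq:app-associator-p0}.

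For the \(q\)-block, write \(q=w+xi\) and identify \(\operatorname{span}\{1,i\}\cong\C\). The equation is \(\dot q=2iq(1-q)\), a complex Riccati equation. Substituting \(q=1/(1-v)\) gives \(\dot v=-2iv\), so \(v=v_0e^{-2it}\). Here \(q_0=1/3\) gives \(v_0=-2\), hence \(q=1/(1+2e^{-2it})\). Multiplying numerator and denominator by \(1+2e^{2it}\) yields \((1+2\cos2t+2i\sin2t)/(5+4\cos2t)\), which is the first two components. One could alternatively differentiate the explicit formulas and check the first two lines of \eqref{eq:app-compensated-components} directly. This is routine: one clears the common denominator \((5+4\cos2t)^2\).

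For the \(r\)-block, I would use the ansatz \(y+\mathrm{i}_{\mathrm c}z=\rho e^{3\mathrm{i}_{\mathrm c}t}\). The \(y,z\) equations give \(\dot\rho=5u\rho\) with angular speed \(3\). With \(\rho=\sqrt3/(2+\cos5t)\) and \(u=\sin5t/(2+\cos5t)\), we get \(\dot\rho/\rho=5\sin5t/(2+\cos5t)=5u\), as required. Next I check \(\dot u=\tfrac52(1-\rho^2+u^2)\). The left side is \(5(2\cos5t+1)/(2+\cos5t)^2\). The right side is \(\tfrac52\bigl((2+\cos5t)^2-3+\sin^2 5t\bigr)/(2+\cos5t)^2=\tfrac52(2+4\cos5t)/(2+\cos5t)^2\), and these agree. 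The denominators \(5+4\cos2t\ge1\) and \(2+\cos5t\ge1\) never vanish, so the solution is globally regular.

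The final properties need a separate argument. The period is clear: all components are \(2\pi\)-periodic. \(q\) has minimal period \(\pi\), and the \(r\)-block has minimal period \(2\pi\) because \(e^{3\mathrm{i}_{\mathrm c}t}\) together with \(\rho\) (period \(2\pi/5\)) forces \(\gcd\)-type minimality. Concretely, \(y+\mathrm{i}_{\mathrm c}z\) returns only when both \(3\tau\in2\pi\Z\) and \(5\tau\in2\pi\Z\), which gives \(\tau\in2\pi\Z\). Injectivity on \([0,2\pi)\) follows from the same argument, since equal \(r\)-values force equal \(e^{3\mathrm{i}_{\mathrm c}t}\) and \(\cos5t,\sin5t\). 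Together with periodicity and the nonvanishing velocity, this gives an embedded circle.

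The main obstacle is full affine span. My approach is to suppose an affine relation \(c_0+c_1w+c_2x+c_3y+c_4z+c_5u=0\) and multiply by \((5+4\cos2t)(2+\cos5t)\). This produces a trigonometric polynomial identity with frequencies drawn from \(\{0,2,5,7,3,1,8,4,6,10,12\}\) and similar sets. I would isolate coefficients of distinct frequencies to conclude all \(c_i=0\). A cleaner route is to note that \(w,x\) are rational in \(e^{2it}\) with a pole structure that differs from that of \(y,z,u\), which are rational in \(e^{it}\) with poles at the roots of \(2+\cos5t\). Comparing pole sets of the complexified relation in \(\zeta=e^{\mathrm{i}_{\mathrm c}t}\) then separates the \((w,x)\) part from the \((y,z,u)\) part. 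Within each block, affine independence is immediate: \((w,x)\) traces a circle, and \(y,z,u,1\) are independent because \(\rho\) has nonconstant modulus modulated at frequency \(3\).
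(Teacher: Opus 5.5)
Your proof is correct. It follows the paper's skeleton: block splitting plus uniqueness, a M\"obius linearisation of the $q$-block (your $v$ is $-1/h$ for the paper's $h=q/(1-q)$), coprime frequencies for injectivity, and pole separation in $\zeta=e^{\mathrm{i}_{\mathrm c}t}$ for the affine span. Two steps go differently.

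For the $(y,z,u)$ block the paper passes to the stereographic coordinates $X_0=(1-n)/(1+n)$, $(X_1,X_2,X_3)=2(y,z,u)/(1+n)$. There the conformal Riccati system becomes the decoupled rotations $\dot X_1=-3X_2$, $\dot X_2=3X_1$, $\dot X_0=-5X_3$, $\dot X_3=5X_0$, so the paper \emph{derives} the formula and explains the frequencies $3$ and $5$. Your polar ansatz $y+\mathrm{i}_{\mathrm c}z=\rho e^{3\mathrm{i}_{\mathrm c}t}$ only verifies the given curve. It does so completely explicitly through $\dot\rho=5u\rho$ and $\dot u=\tfrac52(1-\rho^2+u^2)$, both of which you check correctly.

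For injectivity the paper pairs $e^{2\mathrm{i}_{\mathrm c}t}$ (recovered from $q$) with $e^{3\mathrm{i}_{\mathrm c}t}$. You use $e^{3\mathrm{i}_{\mathrm c}t}$ and $e^{5\mathrm{i}_{\mathrm c}t}$ from the $r$-block alone, which shows slightly more: the projection to $(y,z,u)$ is already injective on $[0,2\pi)$. Your nonvanishing-velocity remark, immediate from $\rho>0$ and angular speed $3$, also supplies the immersion property the paper leaves implicit.

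In the affine-span step, two assertions should be made explicit.
\begin{enumerate}
\item The pole sets are disjoint: $\zeta^2\in\{-2,-\tfrac12\}$ makes $\zeta^5$ purely imaginary, whereas the roots of $\zeta^{10}+4\zeta^5+1$ have $\zeta^5=-2\pm\sqrt3$.
\item The independence of $1,y,z,u$ does not follow from ``nonconstant modulus modulated at frequency 3'' as stated. It follows from multiplying $c+c_3y+c_4z+c_5u=0$ by $2+\cos5t$ and comparing the modes $1,\cos3t,\sin3t,\cos5t,\sin5t$, which is exactly the paper's distinct-powers step.
\end{enumerate}

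Your abandoned first route also works and is short. After clearing $(5+4\cos2t)(2+\cos5t)$, only the frequencies $0,1,2,3,5,7$ occur, not the list you wrote. The $\cos t$, $\sin t$, $\sin2t$ modes give $c_3=c_4=c_2=0$; the constant and $\cos2t$ modes give $c_0=c_1=0$; and the $\sin5t$ mode gives $c_5=0$.
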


\begin{proof}
For the complex block, put \(h=q/(1-q)\).  Equation~\eqref{eq:app-product-blocks} gives \(\dot h=2ih\).  Since \(q(0)=1/3\), one has \(h(0)=1/2\), and therefore
\[
q(t)=\frac{e^{2it}}{2+e^{2it}}.
\]
Rationalizing the denominator gives the first two components of equation~\eqref{eq:app-associator-orbit}.

For the three-dimensional block, let \(n=y^2+z^2+u^2\) and define
\[
X_0=\frac{1-n}{1+n},\qquad
X_1=\frac{2y}{1+n},\qquad
X_2=\frac{2z}{1+n},\qquad
X_3=\frac{2u}{1+n}.
\]
Direct differentiation gives
\[
\dot X_1=-3X_2,\qquad \dot X_2=3X_1,
\qquad
\dot X_0=-5X_3,\qquad \dot X_3=5X_0.
\]
The initial values are \((X_0,X_1,X_2,X_3)=(1/2,\sqrt3/2,0,0)\).  Inverting the stereographic map gives the final three components in Equation~\eqref{eq:app-associator-orbit}.  Both denominators are bounded below by one.

The first block determines \(e^{2it}\), whereas the phase of \(y+iz\) determines \(e^{3it}\).  Equality \(p_*(t_1)=p_*(t_2)\) therefore implies both
\(e^{2i(t_1-t_2)}=1\) and \(e^{3i(t_1-t_2)}=1\).  Since \(\gcd(2,3)=1\), the parametrization is embedded and has minimal period \(2\pi\).

For affine span, set \(\zeta=e^{it}\).  The combinations \(w+ix\) and \(w-ix\) have poles at \(\zeta^2=-2\) and \(\zeta^2=-1/2\), respectively.  These poles are disjoint from those of the last three coordinates, whose common denominator is
\[
P(\zeta)=\zeta^{10}+4\zeta^5+1.
\]
Any affine identity first forces the coefficients of \(w\) and \(x\) to vanish.  After multiplication by \(P\), the remaining numerator is a linear combination of the distinct powers \(\zeta^{10},\zeta^8,\zeta^5,\zeta^2,1\), so all remaining coefficients vanish.
\end{proof}

\begin{proposition}[The uncompensated associator destroys the exact orbit]
\label{prop:app-associator-destroys-orbit}
For every \(0\leq\eta<1\), the curve \(p_*(t)\) in Equation~\eqref{eq:app-associator-orbit} is not a solution of Equation~\eqref{eq:app-associator-family} with the same initial value \(p_0\).  More precisely,
\begin{equation}
\label{eq:app-initial-tangent-defect}
F_\eta(p_0)-\dot p_*(0)
=\frac{5}{12}(1-\eta)(-i+k+2\ell)\neq0.
\end{equation}
\end{proposition}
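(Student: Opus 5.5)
Since $p_*$ solves the compensated equation with $p_*(0)=p_0$ by Theorem~\ref{thm:app-associator-solution}, we have $\dot p_*(0)=F_1(p_0)$. Equation~\eqref{eq:app-associator-difference} then reduces the tangent defect to a single associator:
\[
F_\eta(p_0)-\dot p_*(0)=F_\eta(p_0)-F_1(p_0)=\tfrac52(1-\eta)\,[\ell,r_0,r_0].
\]
Here $r_0$ is the $r$-part of $p_0$ from Equation~\eqref{eq:app-qr-splitting}. I would first justify \eqref{eq:app-associator-difference} directly from \eqref{eq:app-associator-family}. Only the last bracket depends on $\eta$, and the coefficient of $[\ell,r,r]$ is $-\tfrac52\eta$ in $F_\eta$ and $-\tfrac52$ in $F_1$. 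Their difference is therefore $\tfrac52(1-\eta)[\ell,r,r]$.

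Next I compute $r_0$. For $p_0=\tfrac13+\tfrac1{\sqrt3}j$ we get $q_0=w+xi=\tfrac13$ and $r_0=\tfrac1{\sqrt3}j$. The associator is trilinear, so
\[
[\ell,r_0,r_0]=\tfrac13[\ell,j,j]=\tfrac13\bigl((\ell j)j-\ell(jj)\bigr)=\tfrac13\bigl((\ell j)j+\ell\bigr).
\]
I then evaluate this with Table~\ref{tab:multiplication}. First, $\ell j=(k+i)/\sqrt2$. Next, $kj=-(i+\ell)/\sqrt2$ and $ij=(k+\ell)/\sqrt2$, so $(k+i)j=(k-i)/\sqrt2$ and hence $(\ell j)j=(k-i)/2$. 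This gives
\[
[\ell,j,j]=\tfrac12(-i+k+2\ell),\qquad [\ell,r_0,r_0]=\tfrac16(-i+k+2\ell).
\]
Multiplying by $\tfrac52(1-\eta)$ yields exactly $\tfrac5{12}(1-\eta)(-i+k+2\ell)$, which is Equation~\eqref{eq:app-initial-tangent-defect}. The vector $-i+k+2\ell$ is nonzero, and $1-\eta>0$ for $\eta<1$, so the defect is nonzero.

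Finally, suppose $p_*$ solved Equation~\eqref{eq:app-associator-family} for some $\eta<1$. Then $\dot p_*(0)=F_\eta(p_0)$, which contradicts the nonzero defect. So $p_*$ is not a solution, and by uniqueness the true solution through $p_0$ leaves $p_*$ immediately.

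I expect no real obstacle. The only delicate points are the sign bookkeeping in the multiplication table and remembering that the scalar part $q_0$ does not enter the $r$-associator.
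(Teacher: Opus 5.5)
Your proposal is correct and follows the paper's proof: you use $\dot p_*(0)=F_1(p_0)$ and Equation~\eqref{eq:app-associator-difference} to reduce the defect to $\tfrac52(1-\eta)[\ell,r_0,r_0]$ with $r_0=\tfrac1{\sqrt3}j$, and your table computation $[\ell,j,j]=\tfrac12(-i+k+2\ell)$ supplies the ``direct multiplication'' the paper leaves implicit. Your closing step also does not need uniqueness, since a mismatch of tangents at $t=0$ already rules out $p_*$ as a solution.
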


\begin{proof}
At \(p_0\),
\[
r_0=\frac1{\sqrt3}j.
\]
A direct multiplication gives
\begin{equation}
\label{eq:app-initial-associator}
[\ell,r_0,r_0]=\frac16(-i+k+2\ell).
\end{equation}
Since \(\dot p_*(0)=F_1(p_0)\), substitution into Equation~\eqref{eq:app-associator-difference} yields Equation~\eqref{eq:app-initial-tangent-defect}.  Uniqueness of solutions for polynomial vector fields then excludes \(p_*\) from every member of the family with \(\eta<1\).
\end{proof}

For a general \(r=yj+zk+u\ell\), the associator has the canonical expansion
\begin{equation}
\label{eq:app-general-associator}
\begin{aligned}
[\ell,r,r]={}&-\sqrt2\,yz
-\frac12(y+z)(u+y-z)i\\
&-\frac12(4uy+uz+yz+z^2)j\\
&+\frac12(-uy-4uz+y^2+yz)k
+(y^2+z^2)\ell.
\end{aligned}
\end{equation}
The scalar and $i$-components in Equation~\eqref{eq:app-general-associator} feed the $r$-variables back into the $q$-sector as soon as $\eta<1$.  Thus the same term that changes the tangent in Equation~\eqref{eq:app-initial-tangent-defect} also destroys the direct-product decomposition of the compensated equation.  For example, integration from $p_0$ at $\eta=0.9$ gives
\[
 \|p_{0.9}(2\pi)-p_0\|\approx0.607545,
\]
whereas the compensated solution returns exactly after $2\pi$.  This computation illustrates the departure established algebraically by Proposition~\ref{prop:app-associator-destroys-orbit}; it is not used in the proof.

\begin{figure}[!t]
\centering
\includegraphics[width=.99\linewidth]{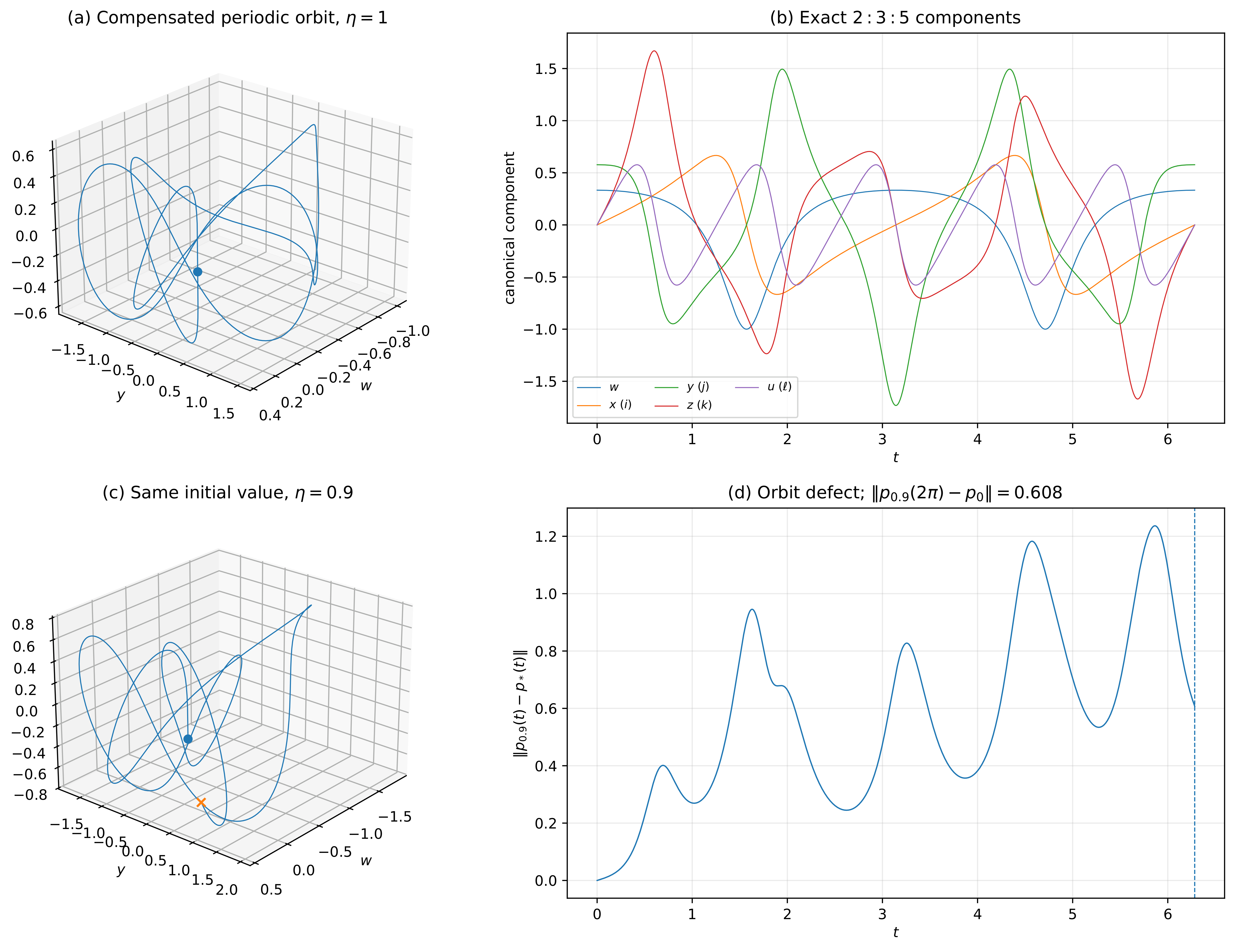}
\caption{Associator compensation and loss of exact recurrence.  Top left: the exact \(\eta=1\) orbit in the canonical \((w,y,u)\) projection.  Top right: its five components over one period.  Bottom left: the trajectory from the same initial value at \(\eta=0.9\), where the uncompensated associator couples the two blocks.  Bottom right: the deviation from the exact orbit; after one nominal period, the trajectory does not return to \(p_0\).}
\label{fig:app-associator-235}
\end{figure}

The compensated member is reducible and is therefore not a second instance of the irreducible elliptic mechanism developed in Section~\ref{subsec:genus-one-pentarnion}.  Its role is different: it gives an exact, transparent example in which nonassociativity is dynamically observable.  Compensation produces a closed full-span orbit, whereas any residual associator immediately changes the vector field along that orbit and prevents its persistence.

\section{Code and data availability}
\label{app:code-hierarchy}

The code and supporting data for this article are available in
\path{pentarnion-dynamics/} of the repository
\url{https://github.com/poshtkohi/pentarnion-algebra}.
The guide in that folder describes the script dependencies, software
requirements and instructions for running the symbolic verifications
and numerical calculations.

\flushbottom
\providecommand{\bysame}{\leavevmode\hbox to3em{\hrulefill}\thinspace}
\providecommand{\MR}{\relax\ifhmode\unskip\space\fi MR }
\providecommand{\MRhref}[2]{\href{http://www.ams.org/mathscinet-getitem?mr=#1}{#2}
}
\providecommand{\href}[2]{#2}

\end{document}